\newcommand{\N}{\mathbb N}
\newcommand{\R}{\mathbb R}
\renewcommand{\H}{\mathbb H}
\newcommand{\mc}{\mathcal}
\newcommand{\id}{\operatorname{id}}
\newcommand{\Rm}{\operatorname{Rm}}
\newcommand{\Ric}{\operatorname{Ric}}
\newcommand{\diam}{\operatorname{diam}}
\newcommand{\vol}{\operatorname{vol}}
\newcommand{\Vol}{\operatorname{Vol}}
\newcommand{\interior}{\operatorname{int}}
\newtheoremstyle{break}
  {}
  {}
  {\itshape}
  {}
  {\bfseries}
  {.}
  {\newline}
  {}
  \theoremstyle{break}
\newtheorem{thm}{Theorem}[section]
\newtheorem{prop}{Proposition}[section]
\newtheorem{ethm}[prop]{Theorem}
\newtheorem{cor}[thm]{Corollary}
\newtheorem{lemma}[prop]{Lemma}
\theoremstyle{definition}
\newtheorem{defn}[prop]{Definition}
\theoremstyle{remark}
\DeclareRobustCommand*{\mfaktor}[3][]
{
   { \mathpalette{\mfaktor@impl@}{{#1}{#2}{#3}} }
}
\newcommand*{\mfaktor@impl@}[2]{\mfaktor@impl#1#2}
\newcommand*{\mfaktor@impl}[4]{
   \settoheight{\faktor@zaehlerhoehe}{\ensuremath{#1#2{#3}}}%
   \settoheight{\faktor@nennerhoehe}{\ensuremath{#1#2{#4}}}%
      \raisebox{-0.5\faktor@zaehlerhoehe}{\ensuremath{#1#2{#3}}}%
      \mkern-4mu\diagdown\mkern-5mu%
      \raisebox{0.5\faktor@nennerhoehe}{\ensuremath{#1#2{#4}}}%
}
\begin{document}

\begin{abstract}
  The regularity of limit spaces of Riemannian manifolds with $L^p$ curvature bounds, $p > n/2$, is investigated under no apriori non-collapsing assumption. A regular subset, defined by a local volume growth condition for a limit measure, is shown to carry the structure of a Riemannian manifold. One consequence of this is a compactness theorem for Riemannian manifolds with $L^p$ curvature bounds and an a priori volume growth assumption in the pointed Cheeger--Gromov topology.

  A different notion of convergence is also studied, which replaces the exhaustion by balls in the pointed Cheeger--Gromov topology with an exhaustion by volume non-collapsed regions. Assuming in addition a lower bound on the Ricci curvature, the compactness theorem is extended to this topology. Moreover, we study how a convergent sequence of  manifolds disconnects topologically in the limit.

  In two dimensions, building on results of Shioya, the structure of limit spaces is described in detail: it is seen to be a union of an incomplete Riemannian surface and $1$-dimensional length spaces.
\end{abstract}
\title{On limit spaces of Riemannian manifolds with volume and integral curvature bounds}
\author{Lothar Schiemanowski}
\email{lothar.schiemanowski@math.uni-freiburg.de}
\address{Mathematisches Institut der Universit\"at Freiburg\\ Ernst-Zermelo-Stra{\ss}e 1\\ D--79104 Freiburg\\ Germany}
\maketitle

\section{Introduction}
Convergence and compactness of sequences of Riemannian manifolds under various geometric conditions is by now a mature subject.  A historically significant example is Cheeger's compactness theorem concerning the class of Riemannian manifolds $(M,g)$ with uniform bounds on the sectional curvature, the diameter and the volume. This result states that for any sequence of manifolds in this class, there exists a subsequence converging in the $C^{1,\alpha}$ Cheeger--Gromov topology to a new Riemannian manifold of class $C^{1,\alpha}$. This result has been improved and generalized in myriad directions. 

When a sequence of manifolds converges in the Gromov--Hausdorff topology, the limit space may fail to be a Riemannian manifold. To ensure that the limit space is a manifold of the same dimension, a crucial point is to exclude collapsing. Lower bounds on the Ricci curvature ensure that if a sequence of connected manifolds undergoes collapse, then it is collapsing everywhere. Uniform integral curvature conditions are in general not sufficient to rule out such behavior. A bound on $\int_M |\Rm_g|^p \vol_g$ for $p > \dim(M)/2$ is sufficiently strong to control the underlying Riemannian metric, but only where the manifold is not volume collapsed. One of the main goals of this article is to equip the non-collapsed part of a limit space of Riemannian manifolds with a uniform bound on this integral curvature with the structure of a Riemannian manifold. After describing these results, we will turn to another theme: convergence, when local collapsing has been ruled out.

Volume collapsing functions, introduced in the following definition, will be the central tool to distinguish collapsed and non-collapsed regions in the limit space.
\begin{defn}
  Let $(X,d,\mu)$ be a metric measure space and let $n \in \N$. The {\em ($n$-dimensional) volume collapsing function} at $x \in X$ is defined to be
  $$\nu(x) = \inf_{r \in (0,1)} \frac{\mu(B(x,r))}{\omega_n r^n},$$
  where $\omega_n$ is the volume of the $n$-dimensional unit ball in Euclidean $\R^n$.
\end{defn}
This definition will be applied to Riemannian manifolds (and their limit spaces) of a {\em fixed} dimension $n$. Since the $n$ will be determined by the context, the dependence on $n$ is suppressed from the notation. This definition bears some relation to the volume radius that has appeared in \cite{a1} (without this name) or \cite{a2}, definition 3.1. If $(X,d,\mu)$ is a Riemannian manifold $(M,g)$ with a lower Ricci curvature bound $\Ric_g \geq -(n-1) \kappa g$, then the volume comparison theorem implies that a lower bound for $\nu(x)$ can be computed in terms of $n, \kappa$ and $\Vol_g(B(x,1))$. Thus in this setting we could replace the volume collapsing function $\nu$ by the function $x \mapsto \Vol_g(B(x,1))$.
\begin{defn}
  Let $(X,d,\mu)$ be a metric measure space and let $n \in \N$. The {\em regular set} of $(X,d,\mu)$ is
  $$X^{reg} = \interior \{ x \in X : \nu(x) > 0 \}.$$
\end{defn}
\begin{wrapfigure}{r}{0.5\textwidth}
\caption{Example of $\{\nu(x) > \epsilon\}$}
\centering
\includegraphics{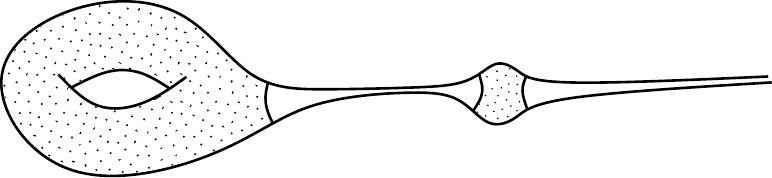}
\end{wrapfigure}

Working with partially collapsed limit spaces requires us to consider a new notion of convergence of Riemannian manifolds. In the definition below, and more generally throughout the article, the dimension of the Riemannian manifolds is always fixed, i.e.\@ $\dim M_k = \dim M = n$ for every $k \in \N$. We make one exception: the limit manifold $M$ is allowed to be empty.

\begin{defn}
  \label{DefVolExConv}
  Let $l \in \N, \alpha \in (0,1)$.
  
  A sequence $(M_k, g_k)$ of Riemannian manifolds converges in the {\em volume exhausted  $C^{l+\alpha}$ Cheeger--Gromov topology} to a Riemannian manifold $(M, g)$, if for every $\epsilon > 0$ and $R > 0$, there exists a domain $\Omega \subset M$ with
  $$\Omega \supset \{x \in M : \nu(x) > \epsilon \}$$
  and for sufficiently large $k$ there exist $C^{l+1+\alpha}$ embeddings $f_k : \Omega \to M_k$, such that
  $$f_k(\Omega) \supset \{x \in M_k : \nu(x) > \epsilon \}$$
  and $f_k^* g_k$ converges to $g$ on $\Omega$ in the $C^{l+\alpha}$ topology of definition \ref{DefConvTensors}
\end{defn}
\begin{figure}[h!]
\caption{Profiles of a sequence of rotationally invariant metrics on $S^2$. A pointed limit and the volume exhausted limit are indicated at the bottom.}
\centering
\includegraphics[width=8cm]{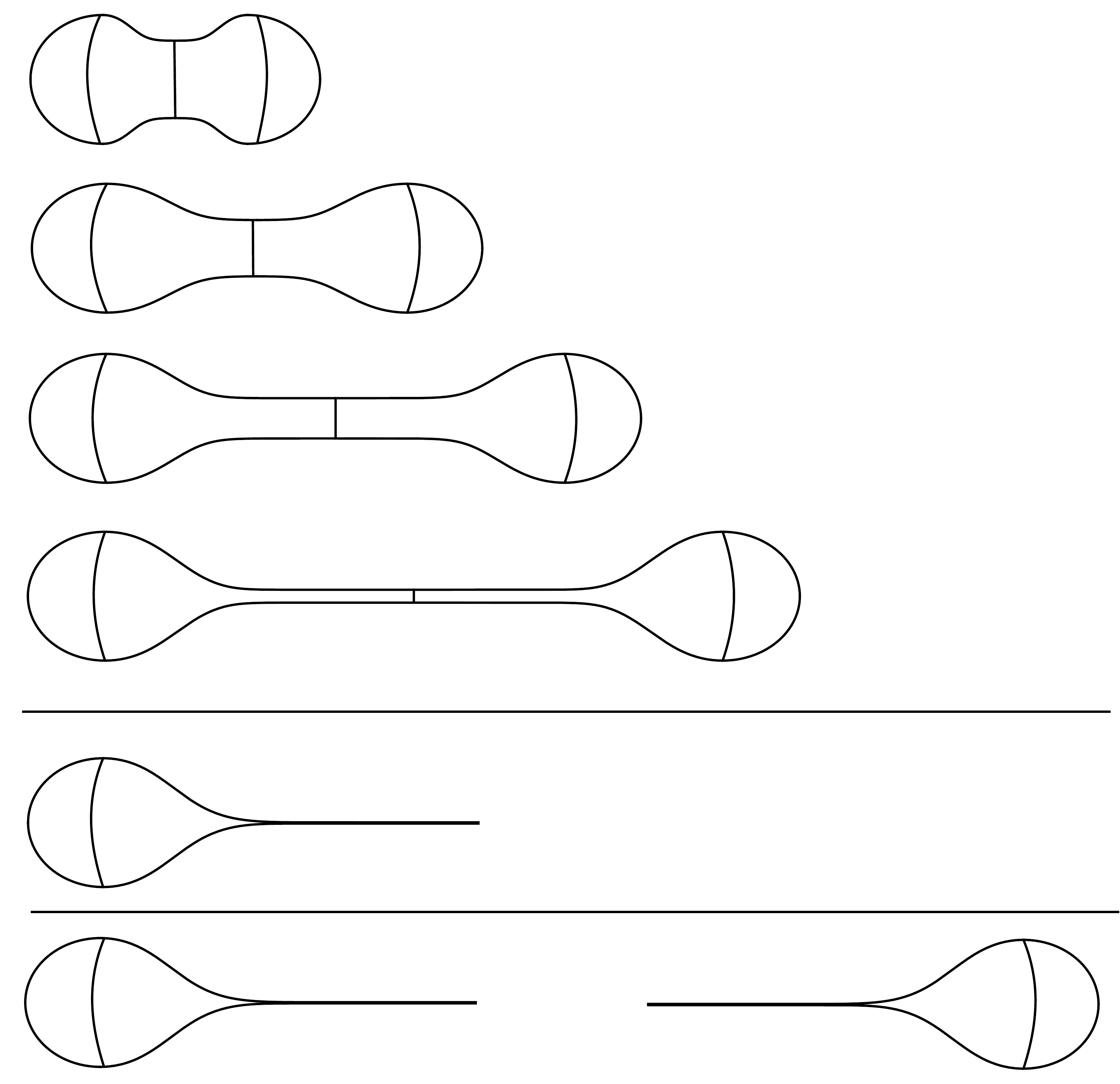}
\label{FigMetricsS2}
\end{figure}

A straightforward adaptation of pointed Cheeger--Gromov convergence gives rise to a pointed version of our volume exhausted topology.
\begin{defn}
  \label{DefPVolExConv}
  Let $l \in \N, \alpha \in (0,1)$.
  
  Let $(M_k,g_k,x_k)$ be a sequence of Riemannian manifolds. This sequence converges in the {\em pointed, volume exhausted  $C^{k+\alpha}$ Cheeger--Gromov topology} to a pointed Riemannian manifold $(M, g,x)$, if for every $\epsilon > 0$ and $R > 0$, there exists a domain $\Omega \subset M$ with
  $$\Omega \supset \{x \in M : \nu(x) > \epsilon \} \cap B(x,R)$$
  and $C^{l+1+\alpha}$ embeddings $f_k : \Omega \to M_k$, such that
  $$f_k(\Omega) \supset \{x \in M_k : \nu(x) > \epsilon \} \cap B(x_k, R)$$
  and $f_k^* g_k$ converges to $g$ on $\Omega$ in the $C^{l+\alpha}$ topology of definition \ref{DefConvTensors}
\end{defn}

With these preparations we are ready to state our first main result.
\begin{thm}
  \label{ThmRegSubconv}
  Let $\Lambda > 0, p > n/2$, $\alpha \in (0, 2 - n/p)$.

  Let $(M_k, g_k, x_k)$ be a sequence of pointed complete Riemannian manifolds satisfying
  $$\left( \int_{M_k} |\Rm_{g_k}|^p \vol_{g_k} \right)^{1/p} \leq \Lambda.$$
  Suppose $(M_k, g_k, x_k)$ is precompact in the pointed Gromov--Hausdorff topology.

  Then there is a subsequence, still denoted by $(M_k, g_k, x_k)$, with the following properties:
  \begin{enumerate}
  \item $(M_k, g_k, x_k)$ converges in the pointed, measured Gromov--Hausdorff topology to a limit space $(X,d, \mu, x)$,
  \item the regular set $X^{reg}$ has the structure of a $C^{\alpha}$ Riemannian manifold $(X^{reg}, g_{reg})$,
  \item if $x \in X^{reg}$, then $(M_k, g_k, x_k)$ converges to $(X^{reg}, g_{reg}, x)$ in the pointed, volume exhausted $C^{\alpha}$ Cheeger--Gromov topology.
  \end{enumerate}
\end{thm}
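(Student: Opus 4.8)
The plan is to extract the limit space from Gromov's precompactness theorem, to upgrade the pointed Gromov--Hausdorff convergence to its measured version by means of a uniform local volume bound, and then to manufacture the Riemannian structure on the non-collapsed part from a local $\epsilon$-regularity estimate whose decisive input is the subcritical scaling of the curvature integral for $p > n/2$. For (1), pointed Gromov--Hausdorff precompactness already yields a subsequence with $(M_k, g_k, x_k) \to (X, d, x)$ together with uniform total boundedness of the balls $B(x_k, R)$. To carry the measures $\mu_k = \vol_{g_k}$ along, I would first record a uniform local volume upper bound $\vol_{g_k}(B(q, r_0)) \le C_0\, r_0^n$, valid at every point $q$ and at a small fixed scale $r_0 = r_0(n,p,\Lambda)$. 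This is where the integral curvature enters at the coarsest level: since $|\Ric| \le c_n |\Rm|$, the scale-invariant integral Ricci quantity is controlled by $\Lambda$, and the almost-monotonicity of the volume ratio in the relative volume comparison theorem for integral Ricci bounds (Petersen--Wei) forces the stated bound once $r_0$ is small. Covering $B(x_k, R)$ by a uniformly bounded number of $r_0$-balls then gives $\mu_k(B(x_k, R)) \le C(R)$, so a further subsequence converges in the pointed, measured Gromov--Hausdorff topology to $(X, d, \mu, x)$.

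The technical core, and the step I expect to be the main obstacle, is a local regularity estimate: for each $\epsilon > 0$ there exist $r_1, C_1 > 0$, depending only on $n, p, \Lambda, \epsilon$, such that whenever $(M, g)$ satisfies the global curvature bound and a point $q$ has $\nu(q) \ge \epsilon$, the $C^{1,\alpha}$ harmonic radius at $q$ is at least $r_1$ and the metric is bounded by $C_1$ in $C^{1,\alpha}$ in these coordinates, with $\alpha = 2 - n/p$. I would prove this by the standard blow-up scheme: if the harmonic radius at $q_k$ tends to $0$ while $\nu(q_k) \ge \epsilon$, rescale to unit harmonic radius. Because $p > n/2$, the scale-invariant quantity $r^{2 - n/p}\bigl(\int_{B(q,r)}|\Rm|^p \vol_g\bigr)^{1/p}$ tends to $0$ as $r \to 0$, so the rescaled curvature integrals vanish, while the non-collapsing $\nu \ge \epsilon$ is scale invariant and persists at all scales in the limit. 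Using $\Ric \in L^p$ and the elliptic $W^{2,p}$ estimate in harmonic coordinates, the rescaled metrics then converge to a flat, volume non-collapsed limit, hence isometric to $\R^n$, whose harmonic radius is infinite, contradicting the normalization. The real work lies in making the harmonic-coordinate elliptic estimates and the compactness of the blow-ups run with only an $L^p$ (rather than $L^\infty$) curvature bound and in coupling the non-collapsing to the harmonic radius; this is also where the range $\alpha \in (0, 2 - n/p)$ originates, through the Sobolev embedding $W^{2,p} \hookrightarrow C^{1, 2 - n/p}$ and the resulting compactness of $C^{1,\alpha}$ balls.

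With this estimate I would establish (2) and (3) simultaneously. If $\bar q \in X$ has $\nu(\bar q) > 0$ with approximants $q_k \in M_k$, the open-set Portmanteau inequality $\mu(B(\bar q, r)) \le \liminf_k \vol_{g_k}(B(q_k, r))$ transfers the limit non-collapsing into non-collapsing of the $q_k$ at every scale $r < 1$, so the estimate applies with a uniform $r_1$ and the harmonic charts at $q_k$ subconverge to a $C^{1,\alpha}$ chart around $\bar q$. Running this over a countable dense subset of each region $\{x \in M_k : \nu(x) > \epsilon\} \cap B(x_k, R)$ and extracting limits of charts and transition maps in the usual Cheeger--Gromov fashion produces a $C^{1,\alpha}$ manifold carrying a $C^\alpha$ metric $g_{reg}$, together with embeddings $f_k$ satisfying $f_k(\Omega) \supset \{x \in M_k : \nu(x) > \epsilon\} \cap B(x_k, R)$; a diagonal argument over $\epsilon = 1/j$ and $R = j$ glues these into a single structure and yields the pointed, volume exhausted $C^\alpha$ convergence of (3), the base point being admissible since $x \in X^{reg}$ lies in a chart. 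On every chart the $C^\alpha$ convergence forces $\vol_{g_k} \to \vol_{g_{reg}}$, so $\mu$ restricts to $\vol_{g_{reg}}$ on the constructed set and $\nu$ recomputed from $\mu$ is close to $1$ at small scales there; this shows the image of the constructed manifold is open and, by the non-collapsing transfer above applied in both directions, coincides with $X^{reg} = \interior\{x \in X : \nu(x) > 0\}$, giving (2). A secondary point to watch is precisely this identification of the \emph{interior}: a single non-collapsed point produces an open chart, so one must verify that no point of $\{\nu > 0\}$ is stranded outside its interior once the Riemannian structure is in place, which the local almost-Euclidean volume behaviour on charts supplies.
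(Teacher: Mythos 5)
Your architecture coincides with the paper's: measured Gromov--Hausdorff convergence from the Petersen--Wei volume bound plus Prokhorov, a uniform harmonic radius estimate on the non-collapsed set, charts extracted by diagonalization over a countable net, and identification of the resulting manifold with $X^{reg}$. The paper simply quotes the harmonic radius estimate (Anderson, Petersen) as a black box rather than reproving it by blow-up, so that part of your proposal is extra work, not a different route.

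The step that does not go through as written is the transfer of non-collapsing from the limit point to the approximating sequence. The portmanteau inequality gives, for each \emph{fixed} $r$, that $\vol_{g_k}(B(q_k,r)) \geq \tfrac{1}{2}\nu(\bar q)\,\omega_n r^n$ for all $k \geq N(r)$; but $\nu(q_k)$ is an infimum over all $r \in (0,1)$ simultaneously, and $N(r)$ may blow up as $r \to 0$, so ``non-collapsing of the $q_k$ at every scale $r<1$'' with a threshold independent of $r$ does not follow from portmanteau alone --- the paper explicitly remarks that this naive argument fails. It closes the gap (lemma 3.3) by fixing a single scale $R_0$ and propagating the volume ratio downward via the Petersen--Wei almost-monotonicity, whose error term $C k(q,R)^{1/2q}$ is controlled because the curvature integral scales subcritically for $p>n/2$; the same device is needed to spread pointwise non-collapsing to a whole ball (lemma 3.1), since the harmonic radius theorem requires $\Vol_g(B(y,r)) \geq \epsilon r^n$ for all $y$ near $q$ and all $r$, not only at $q$. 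You have this tool on the table (you invoke it for the volume upper bound) but never apply it in the downward or outward direction, and your $\epsilon$-regularity statement is phrased with a purely pointwise hypothesis that this propagation is needed to justify. A second, smaller slip: for $n/2 < p < n$ the relevant embedding is $W^{2,p} \hookrightarrow C^{0,\,2-n/p}$, not $C^{1,\,2-n/p}$, so the metric in harmonic charts is only $C^{\alpha}$ and the $C^{1+\alpha}$ regularity of the transition maps is not automatic from elliptic theory; the paper obtains it from the Calabi--Hartman/Taylor theorem that distance-preserving maps between $C^{\alpha}$ metrics are $C^{1+\alpha}$, a step your ``usual Cheeger--Gromov fashion'' elides.
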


In certain situations $X^{reg} = X$. One fairly general criterium for this is recorded in the next theorem.
\begin{thm}
  \label{ThmCptVNCIntC}
  Let $\Lambda > 0, p > n/2, \alpha \in (0, 2-n/p)$ and $\varepsilon : (0, \infty) \to (0, \infty)$ a locally bounded function.
  
  Suppose $(M_k, g_k, x_k)$ is a sequence of pointed, connected complete Riemannian manifolds satisfying
  $$\left( \int_{M_k} |\Rm_{g_k}|^p \vol_{g_k} \right)^{1/p} \leq \Lambda$$
  and for all $x \in M_k$ the volume collapsing constant satisfies
  $$\nu(x) \geq \varepsilon(d_{g_k}(x,x_k)).$$
  Then there exists a subsequence converging in the pointed $C^{\alpha}$ Cheeger--Gromov topology.
\end{thm}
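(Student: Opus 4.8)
The plan is to deduce Theorem~\ref{ThmCptVNCIntC} from Theorem~\ref{ThmRegSubconv}. The non-collapsing hypothesis $\nu(x)\ge\varepsilon(d_{g_k}(x,x_k))$ plays two roles: it supplies the pointed Gromov--Hausdorff precompactness that Theorem~\ref{ThmRegSubconv} presupposes, and it forces the limit to be everywhere regular and the volume exhausted convergence to upgrade to honest pointed Cheeger--Gromov convergence. Throughout I fix $R>0$ and work on $B(x_k,R)$, on which positivity of $\varepsilon$ provides a uniform lower bound $\inf_{B(x_k,R)}\nu\ge\nu_R>0$ independent of $k$ (here the connectedness of $M_k$ guarantees that $d_{g_k}(\cdot,x_k)$ is finite, so the hypothesis constrains every point).

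First I would establish pointed Gromov--Hausdorff precompactness. The bound $\nu\ge\nu_R$ gives $\Vol_{g_k}(B(y,r))\ge\nu_R\,\omega_n r^n$ for $y\in B(x_k,R)$ and $r\in(0,1)$, so any $\delta$-separated subset of $B(x_k,R)$ has cardinality at most $\Vol_{g_k}(B(x_k,R+1))\,(\nu_R\,\omega_n(\delta/2)^n)^{-1}$; total boundedness therefore reduces to a uniform upper bound on $\Vol_{g_k}(B(x_k,R+1))$. To obtain this I would anchor the volume at a small scale through the $\varepsilon$-regularity/harmonic radius estimate underlying Theorem~\ref{ThmRegSubconv}: non-collapsing together with $\big(\int_{M_k}|\Rm_{g_k}|^p\,\vol_{g_k}\big)^{1/p}\le\Lambda$ and $p>n/2$ yields a uniform lower bound $r_0=r_0(R)>0$ on the harmonic radius on $B(x_k,R+1)$, whence $\Vol_{g_k}(B(y,r_0))\le C\omega_n r_0^n$. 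I would then propagate this to scale $R+1$ by an integral Ricci volume comparison of Petersen--Wei type, using that the \emph{global} bound controls the $L^p$ norm of the negative part of $\Ric_{g_k}$ on $B(x_k,R+1)$ uniformly (since $|\Ric_{g_k}|\le c_n|\Rm_{g_k}|$). This produces a uniform upper bound on $\Vol_{g_k}(B(x_k,R+1))$, hence uniform total boundedness, and a subsequence converges in the pointed measured Gromov--Hausdorff topology to some $(X,d,\mu,x)$. I expect this precompactness step---the interplay of the local harmonic radius bound with the global integral curvature bound to control volume growth---to be the main obstacle.

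Next I would apply Theorem~\ref{ThmRegSubconv} to this subsequence, yielding the $C^\alpha$ structure $g_{reg}$ on $X^{reg}$. To see $X^{reg}=X$, I would pass the non-collapsing bound to the limit: for $y\in X$ with approximants $y_k\to y$, the portmanteau inequality $\mu(\overline{B(y,r')})\ge\limsup_k\Vol_{g_k}(\overline{B(y_k,r')})$ for closed balls, together with $\Vol_{g_k}(\overline{B(y_k,r')})\ge\nu_{R+1}\,\omega_n(r')^n$ for large $k$ (valid since $d_{g_k}(y_k,x_k)\to d(y,x)$), gives on letting $r'\uparrow r$ that $\mu(B(y,r))\ge\nu_{R+1}\,\omega_n r^n$, so $\nu(y)\ge\nu_{R+1}>0$ for every $y$ with $d(y,x)\le R$. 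Hence $\{\nu>0\}=X$ and $X^{reg}=\interior X=X$; in particular $x\in X^{reg}$, so conclusion~(3) of Theorem~\ref{ThmRegSubconv} applies and the convergence holds in the pointed, volume exhausted $C^\alpha$ Cheeger--Gromov topology.

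Finally I would remove the words ``volume exhausted''. Since $X=X^{reg}$ is a complete Riemannian manifold with the volume growth controlled above, the closed balls $\overline{B(x,R)}$ are compact and $\nu\ge\nu_R$ on them, so $\overline{B(x,R)}\subset\{\nu>\nu_R/2\}$. Feeding $\epsilon=\nu_R/2$ into Definition~\ref{DefPVolExConv} then yields, for large $k$, embeddings $f_k$ of a domain containing $B(x,R)$ with $f_k^*g_k\to g_{reg}$ in $C^\alpha$ and with images containing $\{\nu>\nu_R/2\}\cap B(x_k,R)\supset B(x_k,R)$, the last inclusion because $\nu\ge\nu_R$ on $B(x_k,R)\subset M_k$ as well. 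Since $R$ was arbitrary, this is exactly convergence in the pointed $C^\alpha$ Cheeger--Gromov topology.
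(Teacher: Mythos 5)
Your proposal is correct and follows the paper's own route: a packing argument for pointed Gromov--Hausdorff precompactness from the non-collapsing lower bound plus an upper volume bound, then theorem \ref{ThmRegSubconv}, then passing the bound $\nu\ge\varepsilon(d_{g_k}(\cdot,x_k))$ to the limit to get $X^{reg}=X$ (your explicit final upgrade from volume-exhausted to ordinary pointed Cheeger--Gromov convergence is a step the paper leaves implicit, and it is right). The one place you overcomplicate is the upper volume bound you single out as the main obstacle: it is immediate from the second inequality of theorem \ref{ThmIntVolComp}, since $k(p,R)\le\int_{B(x,R)}|\Ric_g|^p\vol_g\le c_n^p\Lambda^p$, so no anchoring at the harmonic-radius scale is needed.
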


We note that this theorem implies the following well known theorem.
\begin{cor}
  \label{CorCptIntCRicBdd}
  Let $\Lambda > 0, v > 0, p > n/2, \alpha \in (0, 2-n/p)$.
  
  Suppose $(M_k, g_k, x_k)$ is a sequence of pointed, connected complete Riemannian manifolds satisfying
  $$\left( \int_{M_k} |\Rm_{g_k}|^p \vol_{g_k} \right)^{1/p} \leq \Lambda,$$
  $$\Ric_{g_k} \geq -(n-1),$$
  $$\Vol_{g_k} (B(x_k, 1)) \geq v.$$
  
  Then there exists a subsequence converging in the pointed $C^{\alpha}$ Cheeger--Gromov topology.
\end{cor}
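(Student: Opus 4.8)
The plan is to deduce the corollary directly from Theorem \ref{ThmCptVNCIntC} by producing, from the Ricci lower bound together with the volume bound at the base point, a single locally bounded function $\varepsilon : (0,\infty) \to (0,\infty)$ that bounds the volume collapsing function $\nu$ from below uniformly along the whole sequence. Since the $L^p$ curvature hypothesis, completeness and connectedness appear verbatim in both statements, this volume non-collapsing estimate is the only hypothesis of Theorem \ref{ThmCptVNCIntC} that remains to be verified.

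First I would propagate the volume bound away from the base point using Bishop--Gromov comparison. Fix $k$ and a point $x \in M_k$ with $R := d_{g_k}(x,x_k)$. Then $B(x_k,1) \subset B(x,R+1)$, so $\Vol_{g_k}(B(x,R+1)) \geq v$, and Bishop--Gromov applied at $x$ under $\Ric_{g_k} \geq -(n-1)g_k$ gives
\[
  \frac{\Vol_{g_k}(B(x,1))}{\Vol_{g_k}(B(x,R+1))} \geq \frac{V_{-1}(1)}{V_{-1}(R+1)},
\]
where $V_{-1}(r)$ denotes the volume of an $r$-ball in the simply connected space form of constant curvature $-1$. Combining the two inequalities yields $\Vol_{g_k}(B(x,1)) \geq v\,V_{-1}(1)/V_{-1}(R+1)$, a lower bound depending only on $n$, $v$ and $R = d_{g_k}(x,x_k)$, and in particular independent of $k$.

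Next I would convert this into a lower bound for $\nu(x)$, exactly as indicated in the remark following the definition of $\nu$. For $r \in (0,1)$ Bishop--Gromov gives $\Vol_{g_k}(B(x,r))/\Vol_{g_k}(B(x,1)) \geq V_{-1}(r)/V_{-1}(1)$, and since $V_{-1}(r) \geq \omega_n r^n$ for all $r > 0$ one obtains
\[
  \frac{\Vol_{g_k}(B(x,r))}{\omega_n r^n} \geq \frac{\Vol_{g_k}(B(x,1))}{V_{-1}(1)}.
\]
Taking the infimum over $r \in (0,1)$ gives $\nu(x) \geq \Vol_{g_k}(B(x,1))/V_{-1}(1) \geq v/V_{-1}(R+1)$. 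Thus setting
\[
  \varepsilon(R) := \frac{v}{V_{-1}(R+1)}
\]
we have $\nu(x) \geq \varepsilon(d_{g_k}(x,x_k))$ for all $x \in M_k$ and all $k$.

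Finally, $\varepsilon$ is positive and monotonically decreasing in $R$, hence locally bounded on $(0,\infty)$, so all hypotheses of Theorem \ref{ThmCptVNCIntC} are met and the asserted convergent subsequence exists. The content is entirely this reduction; I do not expect a genuine obstacle beyond the volume comparison bookkeeping, the only point requiring a little care being the uniformity of $\varepsilon$ in $k$, which is guaranteed because every constant entering the estimate depends only on $n$, $v$ and the distance to the base point.
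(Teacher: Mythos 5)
Your proposal is correct and follows essentially the same route as the paper: both reduce to Theorem \ref{ThmCptVNCIntC} by applying Bishop--Gromov twice, once to get $\nu(x)\geq c_n\Vol_{g_k}(B(x,1))$ and once to propagate the volume lower bound from the base point, yielding a locally bounded $\varepsilon$. (Your $\varepsilon(R)=v/V_{-1}(R+1)$, decreasing in $R$, is in fact the correctly oriented version of the comparison quotient; the formula printed in the paper's proof has the fraction inverted.)
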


In the next theorem we consider the {\em global} notion of convergence introduced in definition \ref{DefVolExConv}. It turns out one can still construct a sublimit.
\begin{thm}
  \label{ThmGlobalConvergence}
  Let $\Lambda > 0, V > 0, p > n/2, \alpha \in (0, 2-n/p)$.
  
  Suppose $(M_k, g_k)$ is a sequence of complete Riemannian manifolds satisfying
  $$\left( \int_{M_k} |\Rm_{g_k}|^p \vol_{g_k} \right)^{1/p} \leq \Lambda,$$
  $$\Vol_{g_k}(M_k) \leq V,$$
  $$\Ric_{g_k} \geq -(n-1).$$
  
  Then there exists a subsequence converging to a limit Riemannian manifold $(M,g)$ in the volume exhausted $C^{\alpha}$ Cheeger--Gromov topology.
\end{thm}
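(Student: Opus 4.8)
The plan is to reduce the global statement to the pointed result of Theorem~\ref{ThmRegSubconv} by using the total volume bound to show that, at each fixed collapse level $\epsilon$, the non-collapsed region $\{\nu > \epsilon\}$ consists of only boundedly many well-separated pieces. The limit manifold $M$ is then assembled as a union of the regular sets of pointed limits, one taken in each piece. The lower Ricci curvature bound enters through Gromov's precompactness theorem, which supplies the pointed Gromov--Hausdorff precompactness needed to invoke Theorem~\ref{ThmRegSubconv} at every base point; this is the one hypothesis of that theorem not already present in the current assumptions.

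First I would establish the key finiteness estimate. If $\nu(x) > \epsilon$ then $\Vol_{g_k}(B(x,r)) > \epsilon \omega_n r^n$ for every $r \in (0,1)$ directly from the definition of $\nu$; taking $r = 1/2$ gives the uniform bound $\Vol_{g_k}(B(x,1/2)) > c(\epsilon) := \epsilon \omega_n 2^{-n}$. Hence any $1$-separated subset $\{p_1, \dots, p_N\}$ of $\{x \in M_k : \nu(x) > \epsilon\}$ yields disjoint balls $B(p_i, 1/2)$ of volume $> c(\epsilon)$, so that $N < V/c(\epsilon) =: N(\epsilon)$, a bound independent of $k$. In particular $\{\nu > \epsilon\}$ is covered by at most $N(\epsilon)$ balls of radius $1$.

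Next I would pass to pointed limits. Fixing $\epsilon$, choose a maximal $1$-separated subset $\{p_1^{(k)}, \dots, p_{N_k}^{(k)}\}$ of $\{\nu > \epsilon\}$ in $M_k$, with $N_k \le N(\epsilon)$; after a subsequence $N_k \equiv N$ and every pairwise distance $d_{g_k}(p_i^{(k)}, p_j^{(k)})$ either converges in $[0, \infty)$ or diverges to $\infty$. The relation of bounded distance partitions the indices into finitely many groups, and within each group all net points remain within a uniformly bounded distance of a chosen base point. By Gromov's precompactness theorem each such pointed sequence is precompact in the pointed Gromov--Hausdorff topology, so Theorem~\ref{ThmRegSubconv} applies and produces, on a further subsequence, a limit whose regular set is a $C^\alpha$ Riemannian manifold, together with volume-exhausted convergence of the non-collapsed part (item~(3), after perturbing the base point slightly so that it lies in $X^{reg}$). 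Running a diagonal argument over $\epsilon = 1/j$, I would define $(M, g)$ to be the disjoint union of the regular sets of these pointed limits. The total volume bound guarantees that only finitely many groups, hence finitely many components of $M$, meet each level set $\{\nu > \epsilon\}$, so the construction is consistent as $\epsilon \downarrow 0$.

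The main obstacle is the bookkeeping across scales and the gluing of embeddings. One must check that the pointed limits obtained at different levels $\epsilon$ are mutually compatible, so that a group seen at level $\epsilon$ and its refinements at finer levels $\epsilon' < \epsilon$ produce nested, isometric pieces of $M$ that patch into a single well-defined manifold. One must also verify that distinct groups genuinely separate, i.e.\@ that their mutual distances in $M_k$ diverge, so that for large $k$ the local embeddings furnished by Theorem~\ref{ThmRegSubconv} have pairwise disjoint images and combine into a single embedding of a domain $\Omega \supset \{x \in M : \nu(x) > \epsilon\}$ with $f_k(\Omega) \supset \{x \in M_k : \nu(x) > \epsilon\}$. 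Once this compatibility and separation are secured, the defining conditions of Definition~\ref{DefVolExConv} follow from the pointed volume-exhausted convergence on each piece.
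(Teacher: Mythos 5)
Your strategy is essentially the one the paper uses: a volume-counting argument to show that the non-collapsed region meets only boundedly many well-separated base points, pointed limits at those base points (made possible by the Ricci lower bound), and a disjoint union assembled by diagonalization. Two organizational differences are worth noting. First, the paper does not build $\epsilon$-level nets and then reconcile them across scales; instead it considers the set $\mc{S}_+$ of \emph{sequences} $(x_k)_k$ with $\inf_k \Vol_{g_k}(B(x_k,1)) > 0$, modulo the relation of uniformly bounded distance, and shows by your same pigeonhole/volume argument that this quotient is countable after passing to a subsequence. Choosing one representative per class produces a single, $\epsilon$-independent family of base points $x_k^l$ with mutually diverging distances, which dissolves the cross-scale compatibility problem you flag as your ``main obstacle'': there is nothing to match up between levels, and for each $\epsilon$ one only has to check that $\{v > \epsilon\}$ is eventually contained in $\bigcup_l B(x_k^l, R)$ for some $R$ (if not, a violating sequence $z_k$ would define a new class of $\mc{S}_+/\!\sim$). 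Second, the paper invokes Corollary \ref{CorCptIntCRicBdd} rather than Theorem \ref{ThmRegSubconv} at each base point: under the Ricci lower bound the volume non-collapsing at $x_k^l$ propagates to the whole connected component, so each pointed limit is already a complete $C^{\alpha}$ Riemannian manifold, with no need to extract a regular set or perturb the base point into it. Your route via Gromov precompactness plus Theorem \ref{ThmRegSubconv} also works (and the regular set equals the whole limit space in this situation, for the same reason), but it carries extra verifications that the paper's formulation avoids. The separation of distinct groups, which you correctly identify as the other point needing proof, is built into the paper's equivalence relation and is upgraded from $\sup_k d = \infty$ to $\lim_k d = \infty$ by a further subsequence, exactly as you propose.
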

The limit may be the empty manifold. Consider for example a sequence of collapsing flat tori. Perhaps more surprisingly, the limit may also consist of infinitely many components. This may happen even if all $M_k$ are connected. Consider a cylinder of length $1$ and radius $r$. To each boundary circle we attach hyperbolic cusp. The space thus constructed has only a $C^0$ metric, but we may smooth this in such a way that the curvature remains bounded between $-1$ and $1$.  The total volume of this space goes to zero as $r$ goes to zero. We denote this space by $(C_r, g_r)$.
\begin{figure}
\caption{$(C_r,g_r)$}
\centering
\includegraphics[width=10cm]{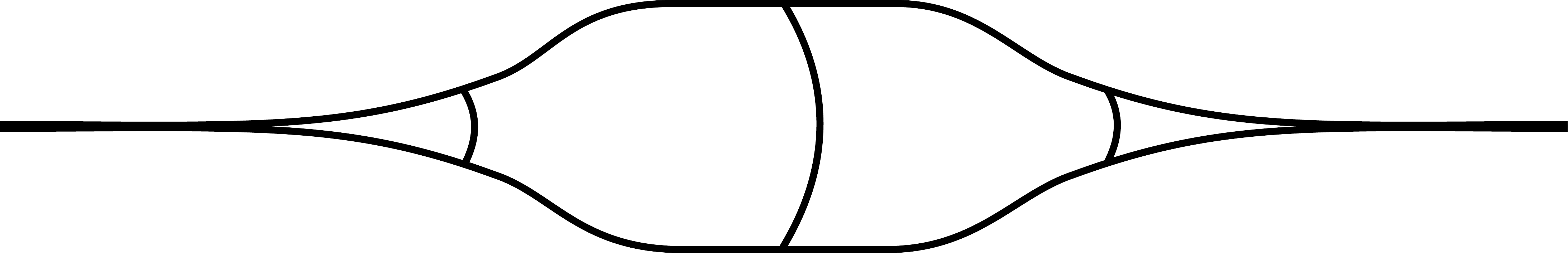}
\end{figure}
Choose $r_k$, such that $\Vol_{g_{r_k}}(C_{r_k}) \leq 2^{-k}$. Then we define $(M_k, g_k)$ by ``gluing'' the $(C_{r_1}, g_{r_1}), \ldots, (C_{r_k}, g_{r_k})$ in a chain. The ``gluing'' is done by replacing two cusps on $(M_k, g_k)$ and $(M_{k+1}, g_{k+1})$ by a hyperbolic collar. The length of the inner closed geodesic of the collar is chosen to converge to $0$ as $k$ goes to infinity.  This sequence converges in the sense of definition \ref{DefVolExConv} to the disjoint union $\sqcup_{k \in \N } (C_{r_k}, g_{r_k})$.
\begin{figure}
\caption{$(M_3,g_3)$}
\centering
\includegraphics[width=13cm]{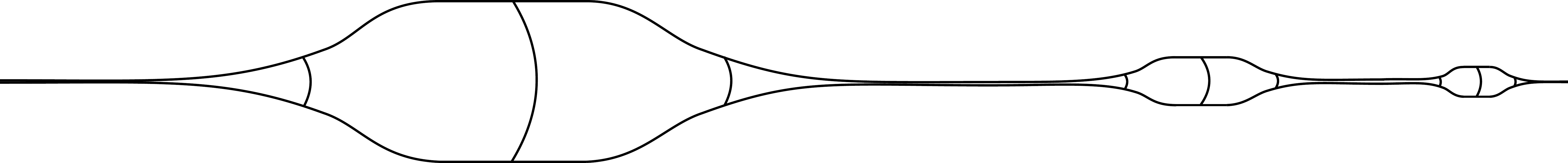}
\end{figure}

Inspired by examples like this, we study how the manifolds may $M_k$ disconnect in the volume exhausted limit. To this end we attach graphs to Riemannian manifold, which record the adjacency structure of regions where the manifold is volume collapsed and where it is not.

A precise definition of these graphs will be given in section \ref{SectCollapsingGraphs}. Here we content ourselves with a rough description of their structure.

Given a Riemannian manifold $(M,g)$ and $\epsilon > 0$, we can consider the sets
$$A_{\epsilon} = \{x \in M : \Vol_g(B(x,1)) \geq \epsilon \} \quad \text{ and} \quad B_{\epsilon} = \{x \in M : \Vol_g(B(x,1)) \leq \epsilon \}.$$
Then let $V = \pi_0(A_{\epsilon}) \cup \pi_0(B_{\epsilon})$ be the set of vertices of the graph. We place an edge between $Z_1, Z_2 \in V$ if and only if $Z_1 \neq Z_2$ and $Z_1 \cap Z_2 \neq \varnothing$. Unfortunately, the function $x \mapsto \Vol_g(B(x,1))$ may be very ill-behaved. This is why this graph is not well suited for our study. The graphs that we will consider -- denoted by $\Gamma_{\epsilon}(M,g)$ -- will arise in a similar fashion, but by substantially coarsening the sets $A_{\epsilon}$ and $B_{\epsilon}$ and modifying the condition for an edge.

In the statement of the theorem we will need the two set functions
$$v_{\min}(U) = \inf_{x \in U} \Vol_g(B(x,1)) \quad \text{and} \quad v_{\max}(U) = \sup_{x \in U} \Vol_g(B(x,1)),$$
where $(M,g)$ is a given Riemannian manifold and $U \subset M$.

\begin{thm}
  \label{ThmCollapsingGraphs}
  Let $(M_k, g_k)$ be a sequence of complete Riemannian manifolds converging in the $C^{\alpha}$ volume exhausted Cheeger--Gromov topology to a complete Riemannian manifold $(M,g)$. Suppose moreover that $M$ is a manifold with finitely many ends (cf.\@ definition \ref{DefManifoldWithEnds}) and $\Vol_g(M) < \infty$.

  Then there exists an $\epsilon_0 > 0$ with the following significance. Let $\epsilon \in (0, \epsilon_0)$.

  \begin{enumerate}
  \item The graph $\Gamma_{\epsilon}(M,g)$ is finite and depends only on the topology of $M$.
  \item Every connected component of $M$ corresponds to a  component of $\Gamma_{\epsilon}(M,g)$ and this component is a star with as many leaves as $M$ has ends.
  \item The centers of the stars in $\Gamma_{\epsilon}(M,g)$ correspond to
    $$V_{\epsilon}^{\alpha}(M,g) = \{Z \in V_{\epsilon}(M,g) : v_{\min}(Z) > 0 \}.$$
    The leaves of the stars in $\Gamma_{\epsilon}(M,g)$ correspond to
    $$V_{\epsilon}^{\omega}(M,g) = \{Z \in V_{\epsilon}(M,g) : v_{\min}(Z) = 0 \}.$$
  \item For sufficently large $k$ there exists a graph morphism $\varphi_k : V_{\epsilon}(M,g) \to V_{\epsilon}(M_k, g_k)$, which is one to one on $V_{\epsilon}^{\alpha}(M,g)$.
  \end{enumerate}
\end{thm}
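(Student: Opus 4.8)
The plan is to reduce the statement to a careful study of the unit-ball volume function $v(x) = \Vol_g(B(x,1))$ on the limit manifold $(M,g)$ alone, and only afterwards to bring in the approximating sequence for part (4). The key starting observation is that the two standing hypotheses on $(M,g)$ already pin down the coarse behaviour of $v$. On any compact subset $v$ is continuous and strictly positive, hence bounded below; and along each of the finitely many ends $v$ must tend to $0$, since a sequence $x_i \to \infty$ along a single end with pairwise distances $> 2$ produces pairwise disjoint unit balls, whose volumes are summable because $\Vol_g(M) < \infty$ and therefore tend to $0$. I would make this quantitative by fixing a connected compact core $K_0$ whose complement is a disjoint union of one end-neighbourhood per end, setting $c_0 = \inf_{K_0} v > 0$, and taking $\epsilon_0$ strictly below $c_0$ (and below whatever threshold enters the coarsening of Section \ref{SectCollapsingGraphs}). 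For $\epsilon < \epsilon_0$ the coarsened non-collapsed set then contains the core, while the coarsened collapsed set is confined to the ends.

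For parts (1)--(3) I would show that with this choice the vertex set $V_\epsilon(M,g)$ consists of exactly one non-collapsed vertex per connected component of $M$ together with exactly one collapsed vertex per end. The non-collapsed vertex is the component of the coarsened $A_\epsilon$ containing the core: the relevant point is that $\{v \ge \epsilon\}$ intersected with a component of $M$ is connected for small $\epsilon$, which is the standard fact that deleting small neighbourhoods of all ends from a connected manifold with finitely many ends leaves a connected core, and that $v_{\min} > 0$ on it, placing it in $V_\epsilon^\alpha$. Each end contributes a single component of the coarsened $B_\epsilon$ with $v_{\min} = 0$, hence a vertex in $V_\epsilon^\omega$, and the number of these equals the number of ends by definition. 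The edge relation then forces each end-vertex to be adjacent precisely to the core vertex of its own component and to nothing else, which is exactly the star asserted in (2), with centres and leaves identified as in (3). Finiteness and topological invariance follow because the only data entering the graph are the number of components and the number of ends of each, which are topological, the coarsening having been designed to be insensitive to the metric once $\epsilon < \epsilon_0$.

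For part (4) I would invoke the volume exhausted convergence. Given $\epsilon$, apply Definition \ref{DefVolExConv} with a slightly smaller parameter $\epsilon' < \epsilon$ to obtain a domain $\Omega \supset \{\nu > \epsilon'\}$ and embeddings $f_k : \Omega \to M_k$ with $f_k^* g_k \to g$ in $C^\alpha$. Each non-collapsed vertex $Z \in V_\epsilon^\alpha(M,g)$ satisfies $v_{\min}(Z) > 0$; since $\nu$ and $v$ are comparable on $M$ via the lower Ricci bound in force (as recorded in the remark after the first definition), $Z$ lies in $\Omega$, and because $f_k$ is almost isometric there the quantity $\Vol_{g_k}(B(\cdot,1))$ stays bounded below on $f_k(Z)$ for large $k$. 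Thus $f_k(Z)$ sits in a non-collapsed region of $M_k$ and determines a vertex of $\Gamma_\epsilon(M_k,g_k)$, giving $\varphi_k$; injectivity on $V_\epsilon^\alpha$ follows because distinct core vertices lie in distinct components whose $f_k$-images remain separated for large $k$. I would deliberately not claim injectivity on the leaves: the chain example preceding the theorem shows two ends of $M$ may be carried into a single thin collar of $M_k$, so $\varphi_k$ may merge leaves.

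The main obstacle I anticipate lies not in the soft topological bookkeeping but in two analytic points. First, the function $v$ is genuinely ill-behaved, so one must use the precise coarsening of Section \ref{SectCollapsingGraphs} to guarantee that $A_\epsilon$ and $B_\epsilon$ have exactly the asserted components and that oscillation of $v$ near the threshold creates no spurious small pieces. Second, transporting the structure across $f_k$ is delicate because the volume exhausted topology controls only the non-collapsed part: I must verify that $\nu$ and $v$ are comparable on $\Omega$ using $\Ric_g \ge -(n-1)$, that unit balls centred in $f_k(Z)$ are essentially captured by $f_k(\Omega)$ so their volumes really do converge, and that $\varphi_k$ respects adjacency and is therefore a genuine graph morphism rather than merely a map of vertices.
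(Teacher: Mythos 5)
Your treatment of parts (1)--(3) follows essentially the paper's route: fix a compact core $K$, use $\Vol_g(M)<\infty$ to force $v\to 0$ along each of the finitely many ends, choose $\epsilon_0$ below $\inf_K v$, and identify the coarsened vertex set with one non-collapsed vertex per component of $M$ plus one collapsed vertex per end. You correctly flag, but defer, the oscillation problem; the paper resolves it with the three-threshold construction $B\subset C\subset D$ (parameters $\lambda_-<\lambda_0<1<\lambda_+$) nested between end-neighbourhoods $X\times(T,\infty)$, which is the content of lemmas \ref{LemmaComponentsVolEnds}--\ref{LemmaGraphStructure}. One caveat: the edge relation in $\Gamma_\epsilon(M,g)$ is not ``components are adjacent'' but the strict triangle inequality condition on $\delta_g$, so the star structure still requires checking that $\delta_g(Z_1,Z_2)=0$ for a core/end pair while two end pieces fail the condition because every path between them passes through the core. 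Your conclusion is right, but that verification is part of the claim rather than automatic.

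The genuine gap is in part (4): you define $\varphi_k$ only on $V_\epsilon^\alpha(M,g)$. The map must be defined on all of $V_\epsilon(M,g)$, and since every edge of the star has a leaf as an endpoint, the graph-morphism property cannot even be formulated without assigning images to the leaves. A leaf $Z$ is an unbounded collapsed region that escapes every domain $\Omega$ supplied by definition \ref{DefVolExConv}, so $f_k(Z)$ is simply not available. The paper's solution is to transport only a compact collar $L\times(T_3,T_5)$ of the end, chosen so that $v$ is bounded below there (hence the collar lies in $\Omega$) yet small enough that its $f_k$-image lands in the collapsed set $D_k$ of $M_k$; $\varphi_k(Z)$ is then the component of $D_k$ containing that image. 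Establishing that $\varphi_k$ preserves edges afterwards needs the quantitative separation (\ref{ineq:distanceassumption}) between collars at the scales $T_1,T_3,T_5$, combined with the almost-isometry property of $f_k$, to exclude that a vertex coming from a \emph{different} component of $M$ sits metrically between $\varphi_k(Z_1)$ and $\varphi_k(Z_2)$ in $M_k$ --- precisely the scenario of the chain example, where distinct components of the limit are joined through thin necks in $M_k$. None of this appears in your proposal. A secondary issue: you invoke a lower Ricci bound to compare $\nu$ and $v$, but no curvature hypothesis appears in theorem \ref{ThmCollapsingGraphs}; whatever relation between $\nu$ and $v$ you need at that step must be extracted from the definitions and the hypotheses actually in force.
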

\begin{figure}[ht]
  \caption{Illustration of theorem \ref{ThmCollapsingGraphs}}
\centering
\includegraphics[width=14cm]{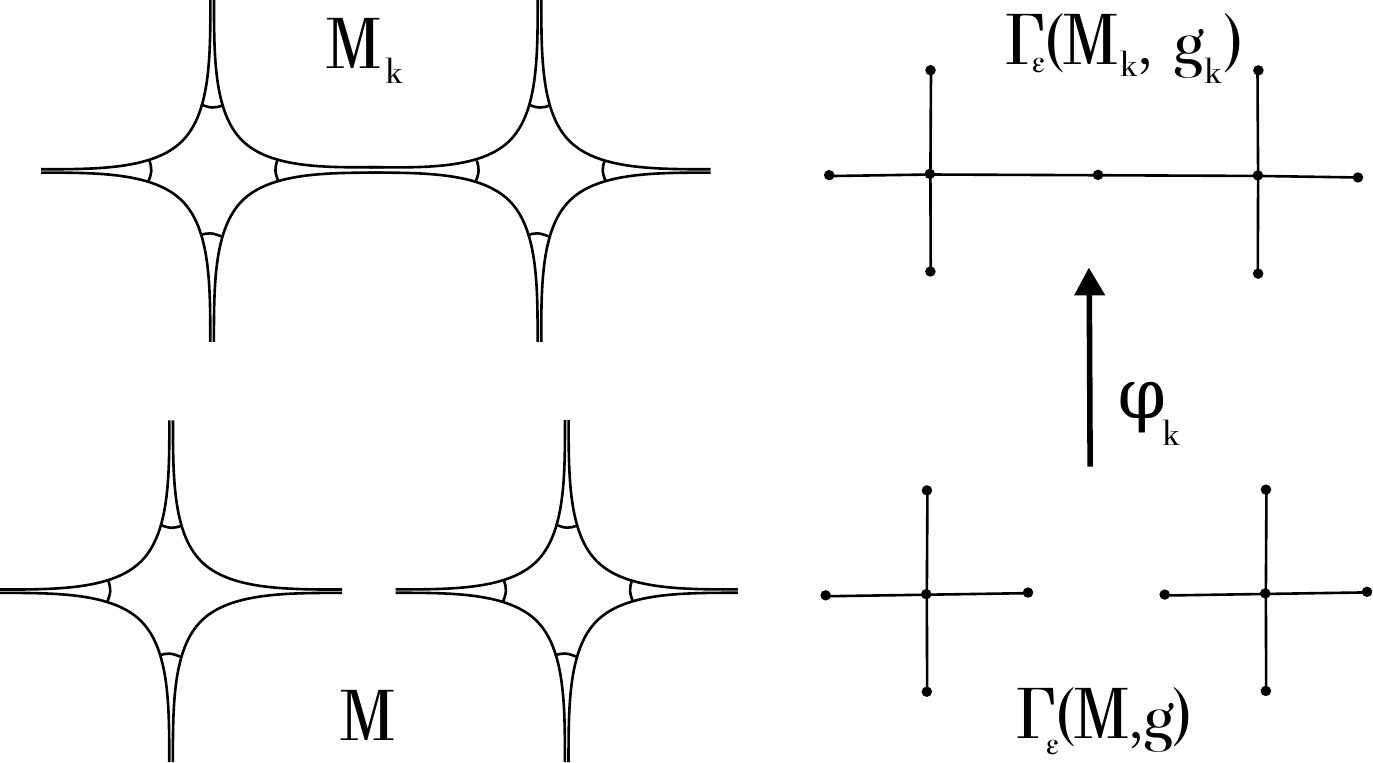}
\end{figure}
The example given before the theorem has shown that it is not guaranteed that a limit space is a manifold with finitely many ends, even if all manifolds in the sequence are connected and have uniformly bounded sectional curvature. We also note that the choice of $\epsilon$ in the proof of this theorem suppresses the geometry of $(M,g)$ completely in favor of the topology. It would be quite interesting to have a more detailled understanding of these graphs for larger $\epsilon > 0$, where the geometry becomes visible.

One might hope that the graphs $\Gamma_{\epsilon}(M_k, g_k)$ stabilize for large $k$. This is not the case, however, as we may construct a sequence similar to the one in figure \ref{FigMetricsS2} which oscillates between one and two components.

The remainder of the paper is devoted to studying surfaces. The particularly simple interaction between topology and geometry --- in the form of the Gauß--Bonnet theorem --- has ramifications for limit spaces, both in the collapsed and in the non-collapsed setting.

The first result is that theorem \ref{ThmRegSubconv} can be significantly improved by applying Shioya's results in \cite{s} on limit spaces of surfaces with bounded integral curvature.

\begin{thm}
  \label{ThmCptSurfIntC}
  Let $V, \Lambda > 0, p>1$ and $\alpha \in (0, 2 - 2/p)$.
  
  Suppose $(M_k, g_k, x_k)$ is a sequence of complete Riemannian surfaces satisfying
  $$\Vol_{g_k}(M_k) \leq V,$$
  $$\left( \int_{M_k} |K_{g_k}|^p \vol_{g_k} \right)^{1/p} \leq \Lambda.$$
  Then there exists a subsequence -- still denoted by $(M_k, g_k, x_k)$ -- with the following properties:
  \begin{enumerate}
  \item $(M_k, g_k, x_k)$ converges in the pointed, measured Gromov--Hausdorff sense to a limit space $(X,d, \mu, x)$,
  \item the set $X^{reg}$ has the structure of a $C^{\alpha}$ Riemannian surface $(X^{reg}, g)$
\item $(M_k, g_k,x_k)$ converges to $(X^{(2)}, g, x)$ in the pointed, volume exhausted $C^{\alpha}$ Cheeger--Gromov topology,
  \item the set
  $$X^{-} = \{x \in X : \mu(B(x,r)) = 0 \text{ for some } r > 0\}$$
  is an open subset of $X$, which is isometric to a disjoint union of points, line segments and circles,
\item the set $X^{\perp} = \partial X^{reg} = \partial X^-$ is a discrete subset of $X$ and
  $$X = X^{reg} \sqcup X^- \sqcup X^{\perp}.$$
\end{enumerate}
\end{thm}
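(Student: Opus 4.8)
The plan is to combine Theorem \ref{ThmRegSubconv} with Shioya's structural results for surfaces of bounded total curvature. The first observation is dimensional: for $n = 2$ the full curvature tensor is algebraically determined by the Gauss curvature, so $|\Rm_{g_k}|$ is a fixed multiple of $|K_{g_k}|$, whence the hypothesis bounds the $L^p$ norm of $\Rm_{g_k}$ by a multiple of $\Lambda$, and the range $\alpha \in (0, 2 - 2/p)$ is exactly $(0, 2 - n/p)$. Moreover, Hölder's inequality together with the volume bound yields
$$\int_{M_k} |K_{g_k}| \,\vol_{g_k} \leq \left( \int_{M_k} |K_{g_k}|^p \,\vol_{g_k} \right)^{1/p} \Vol_{g_k}(M_k)^{1 - 1/p} \leq \Lambda\, V^{1-1/p},$$
so the surfaces have uniformly bounded total absolute curvature. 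This is precisely the hypothesis under which Shioya establishes pointed Gromov--Hausdorff precompactness in \cite{s}.

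First I would invoke Shioya's compactness theorem to extract a pointed Gromov--Hausdorff precompact subsequence, which is the standing assumption of Theorem \ref{ThmRegSubconv}. Applying that theorem then produces, after a further subsequence, the pointed measured Gromov--Hausdorff limit $(X, d, \mu, x)$ of conclusion (1), the $C^{\alpha}$ Riemannian surface structure on $X^{reg}$ of conclusion (2), and the pointed volume exhausted $C^{\alpha}$ Cheeger--Gromov convergence of conclusion (3). Here I would identify the two-dimensional stratum $X^{(2)}$ of Shioya's description with $X^{reg}$ and check that the limiting metric $g$ coincides with $g_{reg}$; this is natural, since both are characterized as the locus where the limit of the area measures remains genuinely two-dimensional.

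The content of conclusions (4) and (5) is then extracted from Shioya's fine description of the limit. Since $\mu$ is the weak limit of the area measures $\vol_{g_k}$, a point lies in $X^-$ exactly when the surrounding surface collapses to something of lower dimension, so that two-dimensional area vanishes locally. Shioya's analysis shows that these collapsed regions are one- and zero-dimensional length spaces, isometric to a disjoint union of points, line segments and circles, which gives the structure of $X^-$ asserted in (4); its openness and the decomposition $X = X^{reg} \sqcup X^- \sqcup X^{\perp}$ follow from the complementary definitions of the two sets. The remaining delicate point is the discreteness of the transition set $X^{\perp} = \partial X^{reg} = \partial X^-$ in conclusion (5).

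The main obstacle is precisely this discreteness, together with the bookkeeping required to match the measure-theoretic sets of the present paper with the geometric strata of \cite{s}. To show $X^{\perp}$ is discrete I would argue that a transition point, where the surface passes from non-collapsed to collapsed behaviour, forces a definite concentration of curvature: near such a point the surface resembles a thin neck or cusp, and by the Gauss--Bonnet theorem each such degeneration absorbs a fixed amount of total curvature. The uniform bound $\Lambda\, V^{1 - 1/p}$ on total curvature then permits only finitely many such points in any compact region, so $X^{\perp}$ admits no accumulation point. Making this curvature-concentration estimate quantitative and uniform in the sequence is the technical heart of the argument, and it is here that Shioya's two-dimensional results do most of the work.
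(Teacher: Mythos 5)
Your overall strategy matches the paper's: Hölder plus the volume bound gives the $L^1$ curvature bound, Shioya's compactness theorem gives pointed Gromov--Hausdorff precompactness, and Theorem \ref{ThmRegSubconv} then delivers conclusions (1)--(3); conclusions (4)--(5) are read off from Shioya's pearl-space description of the limit. However, there is a genuine gap at the step you dismiss as ``natural'': the identification of the two-dimensional stratum of the pearl space with $X^{reg}$. The set $X^{reg}$ is \emph{not} defined as ``the locus where the limit of the area measures remains genuinely two-dimensional''; it is defined by positivity of the volume collapsing function $\nu$. The inclusion $X^{reg}\subset X_2$ is immediate since $X^{reg}$ is a topological $2$-manifold, but the converse inclusion $X_2\subset X^{reg}$ is the substantive point: one must show that a point of the limit with a neighborhood homeomorphic to $\R^2$ actually has positive volume density. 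The paper does this via Shioya's $(2,\delta)$-strainers: a strainer at $x\in X_2$ is approximated by strainers at $x_k\in M_k$, producing $(1+\epsilon)$-bi-Lipschitz charts onto open subsets of $\R^2$ of a uniform size, hence a uniform lower bound $\Vol_{g_k}(B(x_k,r))\geq(1-\alpha(\epsilon))\omega_n r^n$, and then Lemma \ref{LemmaVCFLimit} passes this to the limit to give $\nu(x)>0$. Without some such argument your identification is an assertion, not a proof, and the same issue infects your treatment of $X^-$ (one also needs to check that a point of $X^-$ cannot lie on $\partial X_2$, which the paper does by finding a ball of positive $\mu$-measure inside any ball about such a boundary point).

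On the discreteness of $X^{\perp}$ you take a genuinely different and unnecessarily hard route. The paper gets discreteness essentially for free from the definition of a pearl space together with the index bound of Theorem \ref{ThmTopologicalStructure}: every point is in $X_0$, $X_1$, $X_2$ or is one of the exceptional transition points, and the pearl-space structure forces the exceptional set to be discrete. Your proposed Gauss--Bonnet curvature-concentration argument (each transition absorbs a definite amount of total curvature, so the bound $\Lambda V^{1-1/p}$ caps their number) is plausible in spirit but is left entirely unquantified, and making it rigorous would amount to reproving a portion of Shioya's structure theory that you are already invoking. Since you cite Theorem \ref{ThmTopologicalStructure} anyway, the discreteness should simply be extracted from the pearl-space structure rather than re-derived.
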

\begin{wrapfigure}{r}{0.5\textwidth}
\caption{Example of a limit space: the surfaces enclosed by the solid curves are $X^{reg}$, the dotted lines are $X^-$, the thick points are $X^{\perp}$}
\centering
\includegraphics[width=8cm]{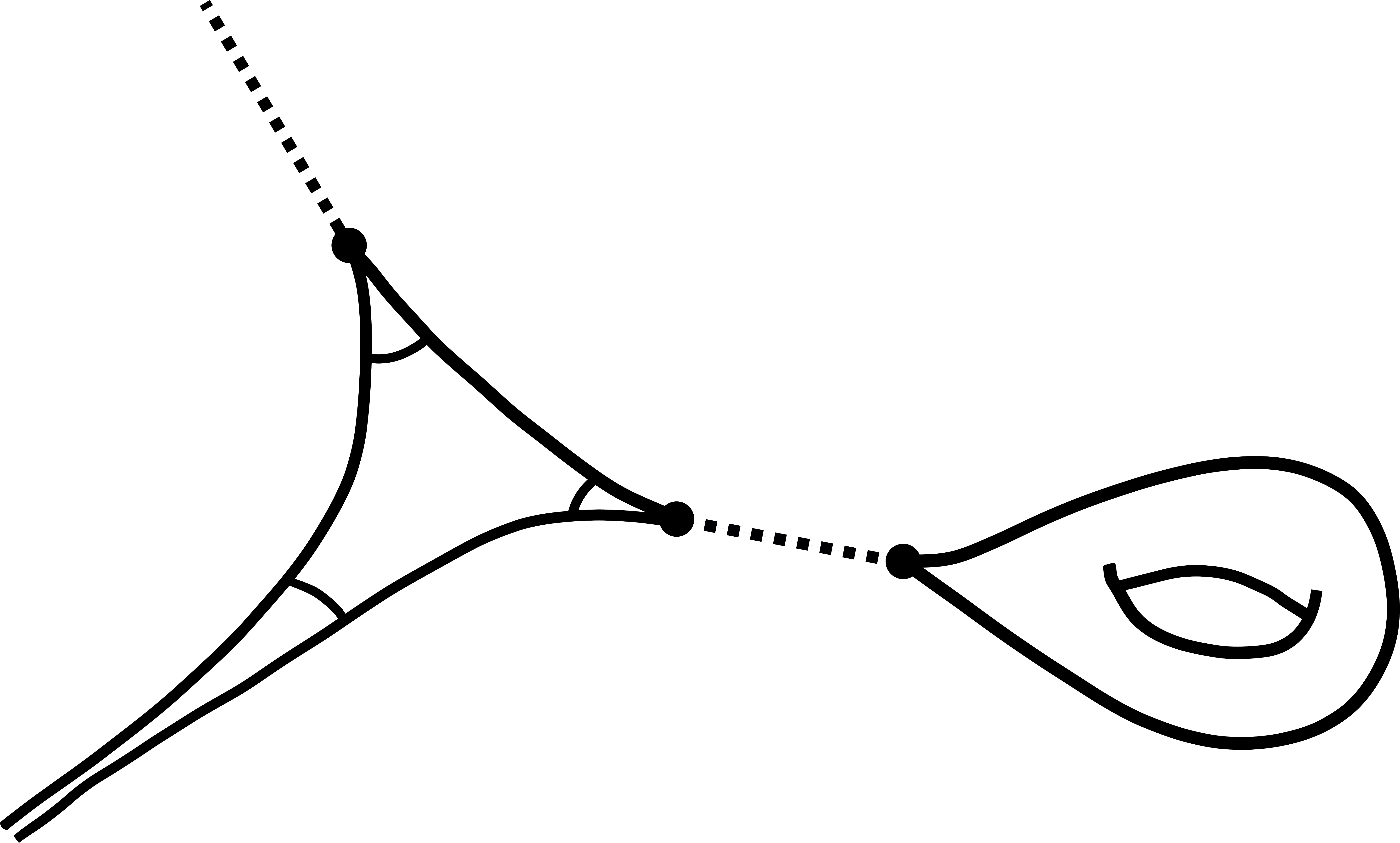}
\end{wrapfigure}

As alluded to in the beginning, these results fit into a long line of research about the limit spaces of Riemannian manifolds under various geometric conditions. In \cite{c} X.X. Chen considers Riemannian metrics on surfaces in a fixed conformal class under an $L^2$ curvature bound and finds a concentration compactness principle. The attempt to understand this from an intrinsic point of view and without assumptions on the conformal class was a strong driver of this work. For uniformly bounded curvature this has already been performed by C.\@ Bavard and P.\@ Pansu in \cite{bp}. In particular, they introduce a thick thin decomposition of surfaces in terms of the injectivity radius. Their construction inspired the definition of the graphs considered in theorem \ref{ThmCollapsingGraphs}. They also prove that after rescaling any sequence of Riemannian surfaces with bounded curvature subconverges to either another Riemannian surface or a metric graph. This inspired our construction of ``global limits'' in theorem \ref{ThmGlobalConvergence}. The difference is that we do not rescale our manifolds, so that we always get a genuine manifold of the same dimension in the limit rather than a graph. Theorem \ref{ThmCptSurfIntC} is a modest combination of our results with T. Shioya's beautiful theory of surfaces of bounded integral curvature in \cite{s}. In this context we also mention C. Debin's article \cite{d}, where a compactness theorem for surfaces of integral curvature with a non-collapsing assumption is proven. While much of this work was inspired by the two-dimensional case, most of the machinery employed and phenomena observed are actually dimension independent. The observation of the local collapsing phenomenon under integral curvature bounds is presumably folklore. The first compactness theorem for Riemannian manifolds with $L^p$ curvature bounds the author is aware of is found in D. Yang's work \cite{y1}. Indeed, theorem 2.1 in \cite{y1} is similar to theorem \ref{ThmRegSubconv}. Our theorem differs from theirs in two respects. First, the convergence is stronger in our theorem. Second, in Yang's theorem the convergence is restricted to open subsets where the condition $\nu > \epsilon$ holds for every member of the sequence. We also note that in theorem 3 of \cite{y3} and theorem 3.2 of \cite{a1}, the authors exhibit thick-thin decompositions of manifolds with $L^p$, $p > n/2$, Riemann curvature bounds. Anderson's decomposition is based on the volume radius, whereas Yang's decomposition is based on the weak injectivity radius introduced in \cite{y4}.

We close this introduction with an overview of the structure of the paper. Section \ref{SecBackground} recalls several notions we need throughout the article. In particular, two key theorems are recalled: the volume comparison theorem under integral curvature assumptions and an estimate for the harmonic radius. The proof of theorem \ref{ThmRegSubconv} is the main goal of section \ref{SecRiemMfdICB}. The proofs of theorem \ref{ThmCptVNCIntC} and \ref{CorCptIntCRicBdd} are also found in this section. Section \ref{SecVolEx} is devoted to understanding the notion of volume exhausted convergence and proving the two theorems \ref{ThmGlobalConvergence} and \ref{ThmCollapsingGraphs}. Section \ref{SecLimitSurfaces} is concerned with the structure of  limit spaces of Riemannian surfaces under an integral curvature bound, i.e.\@ this is where theorem \ref{ThmCptSurfIntC} is proven. The results of Shioya mentioned before are also recalled in detail.

\section{Background information}
\label{SecBackground}

\begin{defn}
  Let $(X_1, d_1)$, $(X_2, d_2)$ be two metric spaces and let $\epsilon > 0$.

  A map $f: X_1 \to X_2$ is called an {\em $\epsilon$-isometry}, if
  \begin{enumerate}
  \item $|d_1(x,y) - d_2(f(x), f(y)) | < \epsilon$ for every $x,y \in X_1$,
  \item $B_{\epsilon}(f(X_1)) = X_2$.
  \end{enumerate}
\end{defn}
\begin{defn}
  A sequence of metric spaces $(X_k, d_k)$ converges to a metric space $(X,d)$ in the {\em Gromov--Hausdorff topology}, if there exist $\epsilon_k$-isometries $f_k : X_k \to X$ with $\epsilon_k \to 0$.

  A sequence of pointed metric spaces $(X_k, d_k, x_k)$ converges to a metric space $(X,d, x)$ in the {\em pointed Gromov--Hausdorff topology}, if for every $R > 0$ there exist $\epsilon_k$-isometries $f_k : B(x_k, R) \to B(x, R)$ with $\epsilon_k \to 0$.
\end{defn}
\begin{defn}
  A sequence of metric measure spaces $(X_k, d_k, \mu_k)$ converges to a metric measure space  in the {\em measured Gromov--Hausdorff topology}, if there exist measurable $\epsilon_k$-isometries $f_k : X_k \to X$ with $\epsilon_k \to 0$, such that the pushforward measures $f_{k*} \mu_k$ converge weakly to the measure $\mu$.
\end{defn}

The following proposition is a straightforward application of Prokhorov's theorem and so we omit its proof.
\begin{prop}
  \label{PropMeasuredSubconv}
  Suppose $(X_k,d_k, \mu_k,x_k)$ is a sequence of pointed metric measure spaces. Suppose that $(X_k, d_k, x_k)$ converges in the pointed Gromov--Hausdorff topology.

  If there is a function $F : \R_+ \to \R_+$, such that $\mu_k(B(x_k, R)) \leq F(R)$ for every $R > 0$ and every $k \in \N$, then there exists a subsequence converging in the pointed, measured Gromov--Hausdorff topology.
\end{prop}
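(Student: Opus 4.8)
The plan is to manufacture the limit measure $\mu$ on the Gromov--Hausdorff limit by a diagonal argument over an exhaustion by balls, applying Prokhorov's theorem on each ball separately. By hypothesis $(X_k, d_k, x_k)$ converges in the pointed Gromov--Hausdorff topology to some pointed space $(X, d, x)$, which I take to be complete, and for every $R > 0$ there exist $\epsilon_k$-isometries $f_k : B(x_k, R) \to B(x, R)$ with $\epsilon_k \to 0$. Fixing an exhausting sequence of radii $R_j \uparrow \infty$ and, by a standard diagonalization, a single sequence of measurable $\eta_k$-isometries $f_k$ (with $\eta_k \to 0$) whose domains exhaust $X_k$, I would push the given measures forward and set $\nu_k := (f_k)_* \mu_k$, a Borel measure on $X$, and study its restrictions to the closed balls $\overline{B(x, R_j)}$.

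First I would record the uniform mass bound on each ball. Since $f_k$ almost preserves distances and moves $x_k$ to within $\eta_k$ of $x$, one has $f_k^{-1}(\overline{B(x, R_j)}) \subset B(x_k, R_j + 2\eta_k)$, so the total mass of $\nu_k$ on $\overline{B(x, R_j)}$ is at most $\mu_k(B(x_k, R_j + 2\eta_k)) \le \mu_k(B(x_k, R_j + 1)) \le F(R_j + 1)$ for all large $k$. Thus for each fixed $j$ the measures $\{\nu_k\}_k$, restricted to $\overline{B(x, R_j)}$, form a family of finite Borel measures of uniformly bounded total mass. In the setting of this paper the spaces are proper, so each $\overline{B(x, R_j)}$ is compact; the mass-bounded family above is then automatically tight, and Prokhorov's theorem yields a subsequence along which $\nu_k$ converges weakly on $\overline{B(x, R_j)}$ to a finite Borel measure $\mu^{(j)}$.

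Running this for $j = 1, 2, \dots$ and extracting a diagonal subsequence, I obtain a single (not relabeled) subsequence along which $\nu_k$ converges weakly on every $\overline{B(x, R_j)}$. The limits are consistent: for $j' < j$ and any continuous $\varphi$ compactly supported in the open ball $B(x, R_{j'})$, extending $\varphi$ by zero gives a continuous function on both compact balls, and testing weak convergence on each yields $\int \varphi \, d\mu^{(j')} = \lim_k \int \varphi \, d\nu_k = \int \varphi \, d\mu^{(j)}$, so $\mu^{(j')}$ and $\mu^{(j)}$ agree on $B(x, R_{j'})$. Hence the $\mu^{(j)}$ glue to a single Borel measure $\mu$ on $X$, and $(f_k)_* \mu_k \to \mu$ weakly in the pointed, measured Gromov--Hausdorff sense.

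The step I expect to carry the actual content is the tightness demanded by Prokhorov's theorem, i.e. the compactness of the limit balls, which is exactly where properness of the spaces enters. Without it the statement fails: unit point masses marching off to infinity along an orthonormal basis of a Hilbert space satisfy the mass bound yet admit no weakly convergent subsequence. So the proposition must be read in the proper setting in which pointed Gromov--Hausdorff convergence is usually formulated, and in which it is applied throughout this article. The remaining work is routine bookkeeping: choosing the $f_k$ measurably, absorbing the slight loss of radius under the almost-isometries, and verifying the consistency of the diagonal weak limits across the exhaustion.
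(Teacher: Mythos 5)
Your argument is correct and is exactly the proof the paper has in mind: the paper omits the proof, stating only that the proposition is ``a straightforward application of Prokhorov's theorem,'' and your write-up carries that out via Prokhorov on each (compact, since the relevant spaces are proper) closed ball of an exhaustion, followed by a diagonal extraction and a consistency check of the local weak limits. Your remark that properness is where the tightness actually comes from, with the Hilbert-space counterexample, is a correct and worthwhile observation about how the statement must be read.
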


For definiteness, we also recall what we mean by convergence of tensor fields on manifolds.
\begin{defn}
  \label{DefConvTensors}
  Let $M$ be a manifold.
  
  A family $T_k$ of tensor fields {\em $C^{l,\alpha}$-converges} to $T$, if
  \begin{enumerate}
  \item there exists a covering of $M$ by coordinate charts $(U_s, \varphi_s)$, such that the coordinate changes are in $C^{l+1, \alpha}$,
  \item on every chart the tensor fields $T_k$ converge in $C^{l,\alpha}$ to $T$, i.e.\@
    $$\|(\varphi_s^{-1})^* (T_k - T)\|_{C^{l,\alpha}(\varphi_s(U_s))} \to 0$$
    as $k \to \infty$.
  \end{enumerate}
\end{defn}

By $\H^n$ we denote the $n$ dimensional hyperbolic space.
Let $V_n(r)$ denote the volume of the ball $B(x,r) \subset \H^n$. The function $V_n(r)$ behaves like the volume of balls in Euclidean space for sufficiently small $r$, i.e.\@ $V_n(r) \approx \omega_n r^n$ for $r \ll 1$.

The next theorem is the classical volume comparison theorem under a lower Ricci curvature bound.
\begin{ethm}
  \label{ThmClassicVolComp}
  Let $(M,g)$ be a complete $n$-dimensional Riemannian manifold and suppose \mbox{$\Ric_g \geq -(n-1)$}. Then
  $$\frac{\Vol_g(B(x,r))}{\Vol_g(B(x,R))} \geq \frac{V_n(r)}{V_n(R)}$$
  for every $x \in M$ and every $R > r > 0$.
\end{ethm}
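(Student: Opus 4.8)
The strategy is the classical Bishop--Gromov argument via geodesic polar coordinates and a Riccati comparison. Fix $x \in M$ and work in geodesic polar coordinates centered at $x$. For a unit vector $\theta \in U_xM \cong S^{n-1}$, let $c(\theta) \in (0,\infty]$ be the distance to the cut point along $r \mapsto \exp_x(r\theta)$, and let $\mathcal{A}(r,\theta)$ denote the Jacobian of $\exp_x$ in these coordinates, so that the Riemannian volume element is $\mathcal{A}(r,\theta)\,dr\,d\theta$ for $0 < r < c(\theta)$ and
$$\Vol_g(B(x,R)) = \int_{S^{n-1}} \int_0^{\min(R,\, c(\theta))} \mathcal{A}(r,\theta)\,dr\,d\theta.$$
The plan is to compare $\mathcal{A}(r,\theta)$ ray by ray with the model area element of $\H^n$ and then integrate.

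First I would derive the Riccati inequality for the mean curvature $m(r,\theta) = \partial_r \log \mathcal{A}(r,\theta) = \tr(\mathrm{II})$ of the geodesic spheres. The radial Riccati equation $\partial_r m = -|\mathrm{II}|^2 - \Ric_g(\partial_r,\partial_r)$, combined with the Cauchy--Schwarz bound $|\mathrm{II}|^2 \geq m^2/(n-1)$ and the hypothesis $\Ric_g \geq -(n-1)$, gives
$$\partial_r m \leq -\frac{m^2}{n-1} - \Ric_g(\partial_r,\partial_r) \leq -\frac{m^2}{n-1} + (n-1).$$
In $\H^n$ the corresponding area element is $\mathcal{A}_{\H}(r) = (\sinh r)^{n-1}$, with mean curvature $m_{\H}(r) = (n-1)\coth r$ satisfying this relation with equality. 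Since both $m$ and $m_{\H}$ have the same asymptotics $m \sim (n-1)/r$ as $r \to 0^+$, the standard Riccati comparison (a subsolution dominated by the solution with matching initial behaviour) yields $m(r,\theta) \leq m_{\H}(r)$ for all $0 < r < c(\theta)$.

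Integrating $\partial_r \log \mathcal{A} \leq \partial_r \log \mathcal{A}_{\H}$ shows that $r \mapsto \mathcal{A}(r,\theta)/\mathcal{A}_{\H}(r)$ is non-increasing on $(0,c(\theta))$ for each fixed $\theta$. To pass to volumes I would extend $\mathcal{A}(\cdot,\theta)$ by zero for $r \geq c(\theta)$; since the ratio was already non-increasing and then drops to $0$, the extended ratio $\bar{\mathcal{A}}(r,\theta)/\mathcal{A}_{\H}(r)$ stays non-increasing on all of $(0,\infty)$. Because the denominator $\mathcal{A}_{\H}$ is independent of $\theta$, integrating in $\theta$ gives that $r \mapsto \int_{S^{n-1}} \bar{\mathcal{A}}(r,\theta)\,d\theta \big/ \mathcal{A}_{\H}(r)$ is an integral of non-increasing functions, hence non-increasing. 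The elementary one-variable lemma on ratios of integrals (if $g/h$ is non-increasing with $h>0$, then $\int_0^R g \big/ \int_0^R h$ is non-increasing) then yields that $R \mapsto \Vol_g(B(x,R))/V_n(R)$ is non-increasing, which is exactly the asserted inequality after rearrangement. The main obstacle is the treatment of the cut locus: the Riccati comparison and the smooth Jacobian identities hold only inside the segment domain $\{r < c(\theta)\}$, where $\exp_x$ is a diffeomorphism. The extension-by-zero device must be justified by noting that $\mathcal{A}$ does not jump upward at $r = c(\theta)$, so replacing it by $0$ beyond the cut distance only decreases the ratio and preserves monotonicity; once this is in place, the two remaining integrations are routine.
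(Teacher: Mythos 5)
Your argument is correct and is the standard Bishop--Gromov proof: Riccati comparison for the mean curvature of geodesic spheres against the hyperbolic model, extension of the area element by zero past the cut locus, and the monotone-ratio-of-integrals lemma; the handling of the cut locus is the right one. The paper itself gives no proof of this statement --- it is quoted as the classical volume comparison theorem in the background section --- so there is nothing to compare against, but your write-up is a complete and accurate account of the textbook argument.
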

The next theorem is a generalisation of the volume comparison theorem, which only assumes a bound on the integral  of the Ricci curvature. This theorem was proven by Petersen and Wei and appears as theorem 1.1 in \cite{pw}.
\begin{ethm}
  \label{ThmIntVolComp}
  Let $(M,g)$ be a complete $n$-dimensional Riemannian manifold. Let $x \in M$, $p > n/2$, $R > 0$. Then there exists a constant $C(n, p, R) > 0$, such that
  $$\left( \frac{\Vol_g(B(x,R))}{V_n(R)} \right)^{1/2p} - \left( \frac{\Vol_g(B(x,r))}{V_n(r)} \right)^{1/2p} \leq C k(p, R)^{1/2p}$$
  for every $r < R$, where
  $$k(p,R) = \int_{B(x,R) \cap \{\Ric_g \leq -(n-1) g\}} |\Ric_g + (n-1) g|^p \vol_g.$$
  Moreover,
  $$\Vol_g(B(x,R)) \leq (1 + C k(p,R)^{1/2p})^{2p} V_n(R).$$
\end{ethm}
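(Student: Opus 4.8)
The natural route is comparison geometry in geodesic polar coordinates, which is the approach of Petersen and Wei. Fix $x \in M$ and write the metric in geodesic polar coordinates centred at $x$, so that up to the cut locus $\vol_g = \mathcal{A}(t,\theta)\,dt\,d\theta$, where $\theta$ ranges over the unit sphere in $T_x M$ and $t$ is arclength along the radial geodesic in direction $\theta$. Let $\mathcal{A}_{-1}(t) = (\sinh t)^{n-1}$ be the corresponding volume element of $\H^n$, so that $V_n(r)$ is the radial integral of $\mathcal{A}_{-1}$ over $[0,r]$ against the standard sphere measure and $\mathcal{A}_{-1}(t) \sim t^{n-1}$ as $t \to 0$. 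Since the volume element vanishes beyond the cut locus, every comparison below only has to be run along each radial geodesic up to its cut distance; this truncation works in our favour, turning the equalities of the model into the one-sided estimates we want.

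Next I would record the Riccati comparison for the radial logarithmic derivative of the volume element. Write $m(t,\theta) = \partial_t \log \mathcal{A}(t,\theta) = \tr S$ for the mean curvature of the geodesic sphere, $S$ its shape operator, and $\bar m(t) = \partial_t \log \mathcal{A}_{-1}(t) = (n-1)\coth t$ for its hyperbolic model. Taking the trace of the radial Riccati equation $S' + S^2 = -\Rm(\cdot,\partial_t)\partial_t$ and using $|S|^2 \geq (\tr S)^2/(n-1)$ gives $m' + m^2/(n-1) \leq -\Ric_g(\partial_t,\partial_t)$, while the model satisfies $\bar m' + \bar m^2/(n-1) = n-1$. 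Introduce the radial excess Ricci $\rho = (-\Ric_g(\partial_t,\partial_t) - (n-1))_+$ and the excess mean curvature $\psi = (m - \bar m)_+$. Subtracting the two relations and discarding the non-negative term $\tfrac{m+\bar m}{n-1}\psi$ yields the fundamental differential inequality $\psi' \leq \rho$ on $\{\psi > 0\}$, with $\psi(t) \to 0$ as $t \to 0$, since both $m$ and $\bar m$ behave like $(n-1)/t$ near the origin.

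The analytic heart of the argument, and the step I expect to be the main obstacle, is upgrading the $L^p$ control of $\rho$ to the $L^{2p}$-type control on the excess that the exponent $1/2p$ in the statement reflects. Multiplying $\psi' \leq \rho$ by $2p\,\psi^{2p-1}$ and integrating from $0$ gives $\psi(t)^{2p} \leq 2p\int_0^t \rho\,\psi^{2p-1}\,ds$, and a Hölder split with exponents $p$ and $p/(p-1)$, taken against the model weight $\mathcal{A}_{-1}$, converts this into a bound of $\psi$ in terms of the radial integral of $\rho^p$. Since $\partial_t (\mathcal{A}/\mathcal{A}_{-1})^{1/2p} = \tfrac{1}{2p}(m-\bar m)(\mathcal{A}/\mathcal{A}_{-1})^{1/2p}$, integrating the mean-curvature estimate yields a pointwise bound on $(\mathcal{A}(t,\theta)/\mathcal{A}_{-1}(t))^{1/2p}$. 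The delicate point is that the Hölder step produces an integral of a power of the model weight $\mathcal{A}_{-1}(t) \sim t^{n-1}$ near $t = 0$, and it is precisely the hypothesis $p > n/2$ that keeps this model integral finite; balancing the weights so that the right-hand side assembles into the invariant integral $k(p,R)$, rather than a coordinate-dependent integral, is the crux and is where the constant $C(n,p,R)$ is produced.

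Finally I would integrate the fibrewise comparison. The estimate above shows that along each radial direction the ratio $(\mathcal{A}(t,\theta)/\mathcal{A}_{-1}(t))^{1/2p}$ decreases up to an error controlled by the radial integral of $\rho^p$. A standard averaging lemma --- if $t \mapsto f(t)/g(t)$ is almost non-increasing then so is $t \mapsto \int_0^t f / \int_0^t g$ --- transfers this to the ratio of ball volumes $\Vol_g(B(x,t))/V_n(t)$, and one more application of Hölder across the sphere turns the accumulated radial integrals of $\rho^p$ into $k(p,R) = \int_{B(x,R)\cap\{\Ric_g \leq -(n-1)g\}} |\Ric_g + (n-1)g|^p\,\vol_g$. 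This gives the first inequality. The second inequality then follows by letting $r \to 0$: since $\Vol_g(B(x,r))/V_n(r) \to 1$, the first inequality becomes $(\Vol_g(B(x,R))/V_n(R))^{1/2p} \leq 1 + C\,k(p,R)^{1/2p}$, and raising to the power $2p$ completes the proof.
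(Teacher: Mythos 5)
The paper does not prove this statement at all: it is quoted as Theorem~1.1 of Petersen--Wei \cite{pw}, so the only thing to compare your sketch against is their argument, whose skeleton you have correctly identified (polar coordinates, the traced Riccati equation, the excess mean curvature $\psi=(m-\bar m)_+$, an $L^{2p}$ estimate for $\psi$ in terms of the $L^p$ norm of the Ricci defect $\rho$, and an averaging step for the ratio of ball volumes).

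However, the step you yourself flag as the crux is genuinely missing, and the route you describe for it would not close. First, by discarding the term $\tfrac{m+\bar m}{n-1}\psi$ you lose the quadratic term $\tfrac{\psi^2}{n-1}$ (and $\tfrac{2\bar m\psi}{n-1}$), which Petersen--Wei retain and which is what makes their key estimate $\bigl(\int_{B(x,r)}\psi^{2p}\,\vol_g\bigr)^{1/2p}\leq C(n,p)\bigl(\int_{B(x,r)}\rho^{p}\,\vol_g\bigr)^{1/2p}$ work: they differentiate $\int_{S^{n-1}}\psi^{2p}\mathcal{A}\,d\theta$ in $r$, the weight contributes a dangerous $+m\psi^{2p}$ term, and it is precisely the retained negative terms that absorb both this and the Young-inequality error coming from $\rho\psi^{2p-1}$; the hypothesis $p>n/2$ enters as positivity of the resulting coefficient, not through integrability of a power of $t^{n-1}$ at the origin. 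With only $\psi'\leq\rho$, your Gronwall--H\"older step yields at best a pointwise bound on $\psi(t)$ in terms of $\int_0^t\rho^p\,ds$ against \emph{arclength} along the ray (or against the model weight $\mathcal{A}_{-1}$), and in the collapsed setting there is no lower bound on $\mathcal{A}$ that would convert this into the invariant quantity $k(p,R)=\int\rho^p\,\mathcal{A}\,ds\,d\theta$. Second, even granting a pointwise bound on $\psi$, integrating $\partial_t\log(\mathcal{A}/\mathcal{A}_{-1})^{1/2p}\leq\psi/2p$ gives $(\mathcal{A}/\mathcal{A}_{-1})^{1/2p}\leq\exp\bigl(\tfrac{1}{2p}\int\psi\bigr)$, i.e.\@ an \emph{exponential} dependence on the curvature integral rather than the linear form $1+Ck(p,R)^{1/2p}$ asserted; Petersen--Wei avoid this by estimating $\tfrac{d}{dr}\tfrac{\Vol_g(B(x,r))}{V_n(r)}$ directly, applying H\"older on $\int_{\partial B(x,r)}\psi$ to invoke the $L^{2p}$ bound, and integrating the resulting differential inequality for the volume ratio. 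Your final limiting step ($r\to0$ giving the second displayed inequality) is fine once the first inequality is in hand.
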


\begin{defn}
  \label{DefHarmonicRadius}
  Let $(M,g)$ be a Riemannian manifold and let $p > 1$.
  
  The {\em ($W^{2,p}$)-harmonic radius} $r_H[Q,p](x)$ is defined to be the supremum of all $r > 0$, such that on the ball $B(x,r)$ there is a harmonic chart $\varphi : B(x,r) \to V \subset \R^n$ and the coefficients $g_{ij}$ of the metric $g$ in the chart satisfy
  \begin{enumerate}
  \item $Q^{-1} \delta_{ij} \leq g_{ij} \leq Q \delta_{ij}$,
  \item $\sum_{1 \leq |\beta| \leq 2} r^{|\beta| - n/p} \|\partial^{\beta} g_{ij}\|_{L^p(V)} \leq Q - 1$.
  \end{enumerate}
\end{defn}

The following theorem appears in various guises in the literature. We refer to \cite{a1}, proposition 3.1 and theorem 5.4, \cite{p1}.
\begin{ethm}
  \label{ThmHarmonicRadius}
  Let $(M,g)$ be a complete Riemannian manifold and suppose that $\epsilon, \Lambda > 0, Q>1, p > n/2$.

  There exists $\rho_0 (\epsilon, \Lambda, p, Q) > 0$, such that for any $x_0 \in M$ with
  \begin{enumerate}
  \item $\int_{B(x_0,1)} |\Rm_g|^p \vol_g < \Lambda$,
  \item $\Vol_g(B(x,r)) \geq \epsilon r^n$ for all $r \in [0,1]$ and $x \in B(x_0, 1)$,
  \end{enumerate}
  the harmonic radius at $x$ satisfies
  $$r_H[Q,p](x) > \rho_0$$
  for every $x \in B(x_0, 1/2)$.
\end{ethm}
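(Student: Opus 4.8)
I would prove this by a blow-up and contradiction argument. Suppose the statement is false for some fixed $\epsilon, \Lambda, Q, p$: then there are complete manifolds $(M_k, g_k)$ with base points $x_0^k$ satisfying (1) and (2), and points $x_k \in B(x_0^k, 1/2)$ with $r_H[Q,p](x_k) \to 0$. By a point-selection (almost-minimizing) argument in the spirit of Anderson, I would first replace $x_k$ by a point where the harmonic radius is nearly minimal on a large ball, so that after rescaling the harmonic radius is bounded below by a definite fraction on a neighborhood whose radius diverges. Set $\rho_k = r_H[Q,p](x_k)$ and rescale $\tilde g_k = \rho_k^{-2} g_k$. The harmonic radius scales like a length — condition (2) of Definition \ref{DefHarmonicRadius} is engineered to be scale invariant — so $r_H[Q,p](x_k) = 1$ for $\tilde g_k$. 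Writing $\tilde g_k = \lambda^2 g_k$ with $\lambda = \rho_k^{-1} \to \infty$, and using $|\Rm_{\tilde g_k}| = \lambda^{-2}|\Rm_{g_k}|$ and $\vol_{\tilde g_k} = \lambda^n \vol_{g_k}$,
$$\int_{B_{\tilde g_k}(x_k, s)} |\Rm_{\tilde g_k}|^p \vol_{\tilde g_k} = \lambda^{\,n-2p}\int_{B_{g_k}(x_k, s/\lambda)} |\Rm_{g_k}|^p \vol_{g_k} \le \rho_k^{\,2p-n}\,\Lambda \to 0,$$
which is exactly where the hypothesis $p > n/2$ is decisive. The non-collapsing condition (2) is scale invariant, so $\Vol_{\tilde g_k}(B(x,s)) \ge \epsilon s^n$ persists for all $s \in (0, \rho_k^{-1})$.

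Because the harmonic radius is bounded below on a neighborhood of $x_k$ whose rescaled radius tends to infinity, on balls of any fixed radius I obtain harmonic coordinates in which the coefficients of $\tilde g_k$ are uniformly bounded in $C^0$ and in $W^{2,p}$. Since $2 - n/p > 0$, the Morrey--Sobolev embedding of $W^{2,p}$ into some $C^{0,\alpha}$ gives uniform Hölder control, and I would extract a subsequence converging to a pointed limit $(N, g_\infty, x_\infty)$ in $C^{0,\beta}_{loc}$, weakly in $W^{2,p}_{loc}$, with $g_\infty$ non-collapsed and complete. Since $\int |\Rm_{\tilde g_k}|^p \to 0$, the limit metric is a $W^{2,p}$ weak solution of $\Rm = 0$; in harmonic coordinates it satisfies the elliptic system $g_\infty^{kl}\partial_k\partial_l (g_\infty)_{ij} = -2(\Ric_{g_\infty})_{ij} + (\text{terms quadratic in } \partial g_\infty)$, and bootstrapping elliptic regularity shows $g_\infty$ is smooth and flat. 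A complete, flat, non-collapsed manifold with Euclidean volume growth $\Vol(B(x,s)) \ge \epsilon s^n$ for all $s > 0$ must be isometric to $\R^n$, whose harmonic radius is infinite.

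The contradiction then comes from transferring the wide-open harmonic charts of the flat limit back to $\tilde g_k$ for large $k$. Concretely, since $\tilde g_k$ is $C^0$-close to the Euclidean metric on a fixed large ball and has small $L^p$ curvature there, I would solve $\Delta_{\tilde g_k} u^i = 0$ with near-linear boundary data to produce genuine harmonic coordinates on, say, $B(x_k, 2)$, and verify the two conditions defining the harmonic radius: (1) holds because the coefficients are $C^0$-close to $\delta_{ij}$, and (2) holds because interior gradient estimates make the first derivatives small, while the harmonic-coordinate equation bounds $\|\partial^2 g_{ij}\|_{L^p}$ by the small $L^p$ norm of $\Ric$ plus a quadratic first-derivative term. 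For large $k$ both constants fall below the thresholds, giving $r_H[Q,p](x_k) \ge 2$, contradicting the normalization $r_H[Q,p](x_k) = 1$. I expect the main obstacle to be precisely this elliptic step: constructing harmonic coordinates with uniform estimates from only the weak $C^{0,\alpha}/W^{2,p}$ regularity produced by the blow-up, and controlling the quadratic gradient term, via $L^p$ elliptic regularity at the critical exponent $p > n/2$. It is also here that the non-collapsing hypothesis (2) is indispensable, since it prevents the blow-up limit from degenerating to a lower-dimensional space and thereby guarantees that the flat limit is genuinely $\R^n$.
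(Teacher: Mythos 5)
The paper does not prove this theorem: it is imported from the literature, with the stated references being Anderson \cite{a1} (Proposition 3.1 and Theorem 5.4) and Petersen \cite{p1}. Your blow-up-and-contradiction sketch is essentially Anderson's proof from those references --- point selection at an almost-minimum of the harmonic radius, rescaling to kill the $L^p$ curvature via the scaling exponent $n-2p<0$, identification of the flat non-collapsed limit with $\R^n$, and transfer of harmonic charts back to contradict the normalization --- so it matches the source the paper relies on, and you correctly flag the genuinely delicate steps (the elliptic construction of harmonic coordinates with uniform estimates, and the role of non-collapsing in forcing the limit to be $\R^n$ rather than a quotient or a lower-dimensional space).
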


\section{Riemannian manifolds under integral curvature bounds}
\label{SecRiemMfdICB}

\subsection{Proof of theorem \ref{ThmRegSubconv}}
For the reader's convenience and to fix notation for this section, we repeat the statement of the theorem.

Let $\Lambda > 0, p > n/2$, $\alpha \in (0, 2 - n/p)$ and let $(M_k, g_k, x_k)$ be a sequence of pointed Riemannian manifolds satisfying
$$\left( \int_{M_k} |\Rm_{g_k}|^p \vol_{g_k} \right)^{1/p} \leq \Lambda$$
and suppose the sequence is precompact in the pointed Gromov--Hausdorff topology.

Then there is a subsequence, still denoted by $(M_k, g_k, x_k)$, with the following properties:
\begin{enumerate}
\item $(M_k, g_k, x_k)$ converges in the pointed, measured Gromov--Hausdorff topology to a limit space $(X,d, \mu, x)$,
\item the regular set $X^{reg}$ has the structure of a $C^{\alpha}$ Riemannian manifold $(X^{reg}, g_{reg})$,
\item if $x \in X^{reg}$, then $(M_k, g_k, x_k)$ converges to $(X^{reg}, g_{reg}, x)$ in the pointed, volume exhausted $C^{\alpha}$ Cheeger--Gromov topology.
\end{enumerate}

The proof of this theorem follows established methods in the compactness theory of Riemannian manifolds. Indeed, we will follow the approach in \cite{p1}, Thm 2.2. For a more recent presentation along the same lines, see \cite{p2}, Thm. 11.3.6. Where the proof is essentially identical to these proofs, we will content ourselves with sketching the argument, referring to \cite{p1} or \cite{p2} for details.

In the following, we will repeatedly pass to subsequences. As usual, this will be done without changing the indexing.

The first step is to pass to a subsequence, which is convergent with respect to the pointed Gromov--Hausdorff topology. This is possible by assumption.

By theorem \ref{ThmIntVolComp}, the volume of balls in any $(M_k, g_k, x_k)$ satisfy $\Vol_{g_k} (B(x_k, R)) \leq (1 + C \Lambda^{1/2p})^{2p} V_n(R)$. Thus we may apply proposition \ref{PropMeasuredSubconv} to obtain a subsequence, which converges in the pointed measured Gromov--Hausdorff topology. This finishes the proof of (1).

The next step is to equip the set $X^{reg} = \interior \{ z \in X : \nu(z) > 0 \}$ with the structure of a Riemannian manifold.

To this end, we first need to investigate properties of the volume collapsing functions of $(M_k, g_k)$ and $(M,g)$. This is done in the following two lemmas.
\begin{lemma}
  \label{LemmaVCFLowerBound}
  Let $(M,g)$ be a complete $n$-dimensional Riemannian manifold and suppose
  $$\left( \int_M |\Rm_g|^p \vol_g \right)^{1/p} = \Lambda < \infty$$
  for some $p > n/2$.

  Suppose that $\nu(x) = \epsilon > 0$.

  Then there exist $\tilde{\epsilon} > 0$ and $r_0 > 0$, which depend only on $n, p, \Lambda$ and $\epsilon$, such that
  $$\nu|_{B(x,r_0)} \geq \tilde{\epsilon}.$$
\end{lemma}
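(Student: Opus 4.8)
The plan is to argue by contradiction and to exploit the supercriticality $p>n/2$ through rescaling, so that the only substantial tool needed is the integral volume comparison, Theorem \ref{ThmIntVolComp}. The reason a contradiction argument is unavoidable is that the constants $\tilde\epsilon,r_0$ must depend \emph{only} on $n,p,\Lambda,\epsilon$. A direct argument on a fixed manifold would succeed, but it would only produce an $r_0$ governed by the rate at which the curvature measure $|\Rm_g|^p\vol_g$ concentrates near $x$ (for a single manifold $\int_{B(x,d)}|\Rm_g|^p\vol_g\to 0$ as $d\to0$, but the speed is manifold-dependent), which is exactly the information we are forbidden to use.

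So suppose the statement fails. Then for the given $n,p,\Lambda,\epsilon$ there are complete manifolds $(M_i,g_i)$ with $\int_{M_i}|\Rm_{g_i}|^p\vol_{g_i}\le\Lambda^p$, points $x_i$ with $\nu(x_i)=\epsilon$, and points $y_i$ with $d_i:=d_{g_i}(x_i,y_i)\to 0$ and $\nu(y_i)\to 0$. I choose scales $s_i\in(0,1)$ and $\theta_i\to 0$ with $\vol_{g_i}(B(y_i,s_i))\le\theta_i\,\omega_n s_i^n$, and then rescale by the \emph{distance}, setting $\hat g_i:=d_i^{-2}g_i$. This normalises $d_{\hat g_i}(x_i,y_i)=1$, turns the collapsing ball into $B_{\hat g_i}(y_i,\hat s_i)$ with $\hat s_i=s_i/d_i$, and, because $\nu(x_i)=\epsilon$, yields the \emph{all-scale} lower bound $\vol_{\hat g_i}(B_{\hat g_i}(x_i,\rho))\ge\epsilon\,\omega_n\rho^n$ for all $\rho\in(0,1/d_i)$ with $1/d_i\to\infty$.

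The engine of the proof is that the Petersen--Wei defect of $\hat g_i$ tends to zero on every fixed ball. Writing $k_{\hat g_i}(p,R)$ for the quantity in Theorem \ref{ThmIntVolComp} and using $|\Ric|\le c(n)|\Rm|$ together with the scalings $\vol_{\hat g_i}=d_i^{-n}\vol_{g_i}$ and $|{\cdot}|_{\hat g_i}=d_i^{2}|{\cdot}|_{g_i}$ on $(0,2)$-tensors, one checks on the set $\{\Ric_{\hat g_i}\le-(n-1)\hat g_i\}$ (where $|\Ric_{g_i}|_{g_i}\ge(n-1)d_i^{-2}$) that $|\Ric_{\hat g_i}+(n-1)\hat g_i|_{\hat g_i}\le (1+\sqrt n)\,d_i^{2}|\Ric_{g_i}|_{g_i}$, whence $k_{\hat g_i}(p,R)\le (1+\sqrt n)^p c(n)^p\,\Lambda^p\,d_i^{\,2p-n}\to 0$ since $2p-n>0$. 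With this in hand I apply Theorem \ref{ThmIntVolComp} centred at $y_i$: the containment $B_{\hat g_i}(x_i,1)\subset B_{\hat g_i}(y_i,2)$ and the all-scale bound at $x_i$ give $\vol_{\hat g_i}(B_{\hat g_i}(y_i,2))\ge\epsilon\,\omega_n$, so comparing the scale $R=2$ down to $r=\hat s_i$ and using $k_{\hat g_i}(p,2)\to 0$ produces, for large $i$, $\vol_{\hat g_i}(B_{\hat g_i}(y_i,\hat s_i))\ge c\,V_n(\hat s_i)$ with $c=c(n,p,\epsilon)>0$.

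It remains to reconcile the scales. When $\hat s_i$ stays bounded, the last inequality — combined with $V_n(\hat s_i)\approx\omega_n\hat s_i^n$ for small $\hat s_i$, and with a comparison from a fixed larger radius $R=\hat s_\infty+2$ when $\hat s_i$ is bounded away from $0$ — contradicts $\vol_{\hat g_i}(B_{\hat g_i}(y_i,\hat s_i))\le\theta_i\omega_n\hat s_i^n$; when $\hat s_i\to\infty$ the containment $B_{\hat g_i}(x_i,\hat s_i-1)\subset B_{\hat g_i}(y_i,\hat s_i)$ (valid since $\hat s_i-1<1/d_i$) already forces $\vol_{\hat g_i}(B_{\hat g_i}(y_i,\hat s_i))\ge 2^{-n}\epsilon\,\omega_n\hat s_i^n$, again a contradiction. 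The one step I expect to require care is making the defect estimate $k_{\hat g_i}(p,R)\to 0$ fully rigorous — in particular tracking the set $\{\Ric_{\hat g_i}\le-(n-1)\hat g_i\}$ correctly under rescaling and ensuring that the constant $C(n,p,R)$ of Theorem \ref{ThmIntVolComp} is invoked only at fixed radii, so that it never degenerates. Everything else is containment of balls and the elementary scaling of $\int|\Rm|^p\vol$, which is where the hypothesis $p>n/2$ enters decisively.
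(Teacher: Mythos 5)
Your argument is correct in substance and the details you flag (the behaviour of the Petersen--Wei defect under the rescaling $\hat g_i = d_i^{-2}g_i$, and the containments of balls at the various scales of $\hat s_i$) do work out as you sketch them. But it takes a genuinely different route from the paper, and your opening premise --- that a contradiction argument is \emph{unavoidable} because a direct argument could only exploit manifold-dependent concentration of $|\Rm_g|^p\vol_g$ near $x$ --- is false. The paper argues directly and quantitatively: it applies Theorem \ref{ThmIntVolComp} not with the exponent $p$ but with the intermediate exponent $q=\tfrac12\bigl(\tfrac n2+p\bigr)\in(n/2,p)$, and then H\"older's inequality gives
$$k(q,R)\;\le\;\Bigl(\int_M|\Ric_g|^p\vol_g\Bigr)^{q/p}\Vol_g\bigl(B(y,R)\bigr)^{1-q/p}\;\lesssim_{n,p,\Lambda}\;R^{\,n(1-q/p)},$$
a smallness in $R$ with constants depending only on $n,p,\Lambda$ and requiring no information about where the curvature concentrates. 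Combined with the lower volume bound at $x$ transported to $y$ via $B(y,R)\supset B(x,R-\delta)$, this yields explicit $R_0$ and $\tilde\epsilon$. Your blow-up argument instead makes the defect small by pushing $d_i^{2p-n}\to0$ through the scaling of $\int|\Rm|^p\vol$, which is the same supercriticality $p>n/2$ entering through a different door. What the paper's route buys is effectiveness: $r_0$ and $\tilde\epsilon$ are in principle computable from $n,p,\Lambda,\epsilon$, whereas your compactness-style contradiction only certifies their existence. What your route buys is that one never has to introduce the auxiliary exponent $q$ or track the explicit power of $R$; the price is the case analysis on $\hat s_i$ and the need to verify the tensorial scaling of $k(\cdot,R)$ on the set $\{\Ric\le-(n-1)g\}$, which you handle correctly. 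You may also simplify your own write-up: since $V_n(r)\ge\omega_nr^n$ for all $r$, the intermediate case ``$\hat s_i$ bounded away from $0$ and $\infty$'' needs no separate treatment beyond choosing the comparison radius $R$ along a subsequence where $\hat s_i$ converges.
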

\begin{proof}
  First, recall that $\nu(x) = \epsilon$ implies
  $$\Vol_g(B(x,r)) \geq \epsilon \omega_n r^n$$
  for every $r \in [0,1]$.

  We want to show that there exists $\tilde{\epsilon} > 0$ and $r > 0$, depending only on $n, p, \Lambda$ and $\epsilon$, such that $\nu(y) \geq \tilde{\epsilon}$ for every $y \in M$ with $d(x,y) < r_0$. Equivalently, we need to show
  $$\Vol_g(B(y,r)) \geq \tilde{\epsilon} \omega_n r^n$$
  for every such $y$.

  By the volume comparison theorem \ref{ThmIntVolComp}, we know that for any $y \in M$, the inequality
  $$\Vol_g(B(y,r)) \leq (1 + C \Lambda^{1/2p})^{2p} V_n(r)$$
  holds. Therfore, there is some $A > 0$, such that $\Vol_g(B(y,r)) \leq A r^n$ for all $r \in [0,1]$.

  Let $q$ be the midpoint between $n/2$ and $p$, i.e.\@ $q = \frac{1}{2} \left( \frac{n}{2} + p \right)$.

  Then, with the notation of theorem \ref{ThmIntVolComp}, we estimate
  \begin{align*}
    k(q,r) & = \int_{B(y,r) \cap \{\Ric_g \leq -(n-1) g\}} |\Ric_g + (n-1) g|^q \vol_g \\
           & \leq \int_{B(y,r)} |\Ric_g|^q \vol_g \\
           & \leq \left( \int_M |\Ric_g|^p \vol_g \right)^{q/p} \Vol_g(B(y,r))^{\frac{p}{p-q}}\\
           & \leq \Lambda^q A^{\frac{p}{p-q}} r^{\frac{np}{p-q}}
  \end{align*}
  Let $B = \Lambda^q A^{\frac{p}{p-q}}$. Then $k(q,r) \leq B r^{\frac{4 n p}{2p - n}}$. Note that the exponent $\frac{4np}{2p - n}$ exceeds $4n$.

  Let $0 < r < R$. Then by theorem \ref{ThmIntVolComp}
  $$\left(\frac{\Vol_g(B(y,r))}{V_n(r)}\right)^{1/2q} \geq \left(\frac{\Vol_g(B(y,R))}{V_n(R)}\right)^{1/2q} - C k(q,R)^{1/2q}.$$
  Let $\delta = d(x,y)$. If $\delta < R$, then $B(y,R) \supset B(x,R-\delta)$ and so
  $$\left(\frac{\Vol_g(B(y,r))}{V_n(r)}\right)^{1/2q} \geq \left(\frac{\Vol_g(B(x,R-\delta))}{V_n(R)}\right)^{1/2q} - C k(q,R)^{1/2q}.$$
  Assuming $R < 1$, we obtain
  $$\left(\frac{\Vol_g(B(y,r))}{V_n(r)}\right)^{1/2q} \geq \left(\frac{\epsilon \omega_n (R-\delta)^n}{V_n(R)}\right)^{1/2q} - C \left( B R^{\frac{4 n p}{2p - n}} \right)^{1/q}.$$
  If $\delta < R/2$, then
  $$\left(\frac{\Vol_g(B(y,r))}{V_n(r)}\right)^{1/2q} \geq \left(\frac{\epsilon \omega_n R^n}{2^n V_n(R)}\right)^{1/2q} - C \left( B R^{\frac{4 n p}{2p - n}} \right)^{1/q}.$$
  Now we note that on the one hand $R \mapsto R^n/ V_n(R)$ is bounded below on $[0,1]$ and that on the other hand $R \mapsto R^{\frac{4 np}{q(2 p - n)}}$ is going to zero as $R$ goes to zero. Thus there exists some $R_0 > 0$ and $C > 0$, such that
  $$\frac{\Vol_g(B(y,r))}{V_n(r)} \geq C$$
  for all $r \in [0, R_0]$ and $y \in B(x, R_0/2)$. Using that $V_n(r) \approx \omega_n r^n$ for small $r$, the claim of the lemma easily follows.
\end{proof}
\begin{lemma}
  \label{LemmaMeasBalls}
  Suppose $(X_n, d_n, \mu_n, x_k)$ converges to $(X,d, \mu, x)$ in the pointed, measured Gromov--Hausdorff topology.

  Then
  $$\limsup_{k \to \infty} \mu_k(B(x_k, r_1)) \leq \mu(B(x,r_2)) \leq \liminf_{k \to \infty} \mu_k(B(x_k, r_3))$$
  for any $0 < r_1 < r_2 < r_3$.
\end{lemma}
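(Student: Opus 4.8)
The plan is to deduce both inequalities from the Portmanteau theorem applied to the weakly convergent pushforward measures. By the definition of pointed, measured Gromov--Hausdorff convergence, for every sufficiently large $R$ there are measurable $\epsilon_k$-isometries $f_k : B(x_k, R) \to B(x, R)$ with $\epsilon_k \to 0$, which may be assumed to satisfy $d(f_k(x_k), x) < \epsilon_k$, and such that the pushforwards $(f_k)_* \mu_k$ converge weakly to $\mu$. Fixing $R > r_3$, all balls appearing in the statement lie in this region, so it suffices to compare $\mu_k(B(x_k, r))$ with $(f_k)_* \mu_k$ of a nearby ball around $x$.

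First I would record the two elementary inclusions coming from the $\epsilon_k$-isometry property $|d_k(y,z) - d(f_k(y), f_k(z))| < \epsilon_k$ together with $d(f_k(x_k), x) < \epsilon_k$. For $y \in B(x_k, r)$ one estimates $d(x, f_k(y)) \leq d(x, f_k(x_k)) + d(f_k(x_k), f_k(y)) < r + 2\epsilon_k$, giving
$$B(x_k, r) \subset f_k^{-1}\big(B(x, r + 2\epsilon_k)\big).$$
Running the same estimate in the reverse direction yields, for every $s > 0$,
$$f_k^{-1}\big(B(x, s)\big) \subset B(x_k, s + 2\epsilon_k).$$
Both inclusions translate directly into inequalities between $\mu_k(B(x_k, \cdot))$ and $(f_k)_* \mu_k(B(x, \cdot)) = \mu_k(f_k^{-1}(B(x, \cdot)))$.

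For the upper bound I would choose $r'$ with $r_1 < r' < r_2$. For $k$ large enough that $r_1 + 2\epsilon_k < r'$, the first inclusion gives $\mu_k(B(x_k, r_1)) \leq (f_k)_* \mu_k(B(x, r')) \leq (f_k)_* \mu_k(\overline{B(x, r')})$. Since $\overline{B(x, r')}$ is closed, the upper-semicontinuity half of Portmanteau yields $\limsup_k (f_k)_* \mu_k(\overline{B(x, r')}) \leq \mu(\overline{B(x, r')}) \leq \mu(B(x, r_2))$, the last step using $\overline{B(x, r')} \subset B(x, r_2)$. For the lower bound I would apply the open-set half of Portmanteau to $U = B(x, r_2)$, obtaining $\liminf_k (f_k)_* \mu_k(B(x, r_2)) \geq \mu(B(x, r_2))$. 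For $k$ large enough that $r_2 + 2\epsilon_k < r_3$, the second inclusion gives $(f_k)_* \mu_k(B(x, r_2)) = \mu_k(f_k^{-1}(B(x, r_2))) \leq \mu_k(B(x_k, r_3))$, and chaining the two estimates completes the argument.

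The only point requiring genuine care is that the maps $f_k$ are merely measurable and only approximately isometric, so one cannot identify $f_k(B(x_k,r))$ with a metric ball. The argument is therefore arranged so that only \emph{preimages} of balls centered at $x$ appear, which keeps the pushforward manipulations valid, and so that the strict inequalities $r_1 < r_2 < r_3$ leave precisely enough room to absorb the error terms $2\epsilon_k$ once $k$ is large. It is exactly this separation of radii, together with the passage between open and closed balls, that allows both halves of the Portmanteau theorem to be invoked.
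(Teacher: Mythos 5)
Your argument is correct and is precisely the ``straightforward application of the definitions'' that the paper omits, pointing instead to the semicontinuity properties of weakly convergent measures (Theorem 1.40 in \cite{eg}): you combine the $\epsilon_k$-isometry inclusions with the open-set and closed-set halves of the Portmanteau theorem, using the gaps $r_1 < r_2 < r_3$ to absorb the $2\epsilon_k$ errors. The only point worth flagging is your normalization $d(f_k(x_k),x)<\epsilon_k$, which is not literally part of the paper's definition of an $\epsilon$-isometry but is the standard convention for pointed convergence and is harmless here.
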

The proof of this lemma is a straightforward application of the definitions and so we omit it. Compare theorem 1.40 in \cite{eg}.
\begin{lemma}
  \label{LemmaVCFLimit}
  The inequality
  $$\limsup_{k \to \infty} \nu(x_k) \leq \nu(x)$$
  holds.

  There exists a function $f : \R_+ \to \R_+$ depending only on $n, \Lambda, p$ and $V$, such that if $\nu(x) > 0$, then
  $$\liminf_{k \to \infty} \nu(x_k) \geq f(\nu(x)).$$
\end{lemma}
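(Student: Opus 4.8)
The first assertion is upper semicontinuity of $\nu$ and follows directly from the measured convergence through Lemma \ref{LemmaMeasBalls}. I would fix $r \in (0,1)$ and an auxiliary radius $r_1 \in (0,r)$. Since $\nu(x_k) \le \mu_k(B(x_k,r_1))/(\omega_n r_1^n)$ by the definition of the volume collapsing function, taking the upper limit in $k$ and applying the left inequality of Lemma \ref{LemmaMeasBalls} with $r_1 < r_2 = r$ gives
$$\limsup_{k \to \infty} \nu(x_k) \le \frac{\limsup_{k\to\infty}\mu_k(B(x_k,r_1))}{\omega_n r_1^n} \le \frac{\mu(B(x,r))}{\omega_n r_1^n}.$$
Letting $r_1 \to r^-$ removes the auxiliary radius, and then taking the infimum over $r \in (0,1)$ yields $\limsup_k \nu(x_k) \le \nu(x)$.

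For the quantitative lower bound I would set $\epsilon = \nu(x) > 0$, so that $\mu(B(x,r)) \ge \epsilon\,\omega_n r^n$ for every $r \in (0,1)$. The right inequality of Lemma \ref{LemmaMeasBalls}, applied with $r_2 < r_3 = r$ and then letting $r_2 \to r^-$, produces the radius-by-radius bound
$$\liminf_{k \to \infty}\mu_k(B(x_k,r)) \ge \epsilon\,\omega_n r^n \qquad \text{for every } r \in (0,1).$$
The issue is that $\nu(x_k)$ is an \emph{infimum over all radii}, whereas this bound controls each radius separately; I would close the gap by re-running the integral volume comparison exactly as in the proof of Lemma \ref{LemmaVCFLowerBound}. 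Indeed, that proof establishes a self-contained mechanism: there are $R_0 = R_0(n,p,\Lambda,\epsilon) \in (0,1)$ and $c_0 = c_0(n,p,\Lambda,\epsilon) > 0$ such that on any complete $n$-manifold with $\int |\Rm|^p\,\vol \le \Lambda^p$, a single-scale lower bound $\Vol(B(z,R_0)) \ge \tfrac{\epsilon}{2}\omega_n R_0^n$ upgrades to $\Vol(B(z,r)) \ge c_0\,\omega_n r^n$ for all $r \in (0,R_0]$.

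To deploy this I would first fix the scale $R_0$ as above, and then use the displayed bound to find that $\Vol_{g_k}(B(x_k,R_0)) \ge \tfrac{\epsilon}{2}\omega_n R_0^n$ for all large $k$. Applying the mechanism of Lemma \ref{LemmaVCFLowerBound} at $z = x_k$ (the special case $\delta = 0$, so no neighborhood transfer is needed) with $q = \tfrac12(n/2 + p)$ gives, using the a priori bound $\Vol_{g_k}(B(x_k,r)) \le A r^n$ from the ``moreover'' clause of Theorem \ref{ThmIntVolComp} and $|\Ric| \le c_n|\Rm|$,
$$\left(\frac{\Vol_{g_k}(B(x_k,r))}{V_n(r)}\right)^{1/2q} \ge \left(\frac{\Vol_{g_k}(B(x_k,R_0))}{V_n(R_0)}\right)^{1/2q} - C\,k(q,R_0)^{1/2q}$$
for all $r \le R_0$, where $k(q,R_0) \le B R_0^{4np/(2p-n)}$ with a positive exponent. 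Since $V_n(R_0)/(\omega_n R_0^n) \to 1$ while the error term tends to $0$ as $R_0 \to 0$, the choice of $R_0$ makes the right-hand side at least $c_0 > 0$, and $V_n(r) \ge \omega_n r^n$ then gives $\Vol_{g_k}(B(x_k,r)) \ge c_0\,\omega_n r^n$ for $r \in (0,R_0]$. For $r \in (R_0,1)$ monotonicity of the measure together with $r^n < 1$ yields $\Vol_{g_k}(B(x_k,r)) \ge \tfrac{\epsilon}{2}\omega_n R_0^n \ge \tfrac{\epsilon}{2}R_0^n\,\omega_n r^n$. Taking the infimum over $r \in (0,1)$ shows $\nu(x_k) \ge \min\{c_0, \tfrac{\epsilon}{2}R_0^n\} =: f(\epsilon)$ for all large $k$, whence $\liminf_k \nu(x_k) \ge f(\epsilon)$.

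\textbf{Main obstacle.} The genuine difficulty is precisely the passage from the radius-by-radius lower bound to a bound uniform in $r$: measured Gromov--Hausdorff convergence by itself cannot prevent the ratio $\Vol_{g_k}(B(x_k,r))/(\omega_n r^n)$ from dipping as $k \to \infty$, since the radius realizing the infimum in $\nu(x_k)$ could escape to $0$. The integral curvature bound, through the rapid vanishing of $k(q,R_0)$ as $R_0 \to 0$, is exactly what pins the ratio at all smaller scales to its value at the fixed scale $R_0$, uniformly in $k$. Everything else is bookkeeping; the resulting $f$ depends only on $n,p,\Lambda$ (the role of any a priori upper volume bound $V$ being already supplied here by Theorem \ref{ThmIntVolComp}).
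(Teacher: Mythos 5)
Your proposal is correct and follows essentially the same route as the paper: the upper semicontinuity comes from Lemma \ref{LemmaMeasBalls} (your radius-by-radius bookkeeping is a slightly cleaner packaging of the paper's $\inf\sup\geq\sup\inf$ manipulation), and the lower bound is obtained exactly as in the paper by fixing a scale $R_0$ from the measured convergence and propagating it to all smaller radii via the Petersen--Wei comparison with the intermediate exponent $q$, which is precisely how the paper resolves the obstacle you correctly identify. Your observation that the dependence on $V$ is superfluous is also accurate, since the needed upper volume bound comes from Theorem \ref{ThmIntVolComp}.
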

\begin{proof}
  We start with the inequality $\limsup_{k \to \infty} \nu(x_k) \leq \nu(x)$.
  
  To this end, recall the definition
  $$\nu(x) = \inf_{r \in (0,1)} \frac{\mu(B(x,r))}{\omega_n r^n}.$$
  By lemma \ref{LemmaMeasBalls}, we have
  $$\mu(B(x,R)) \geq \limsup_{k \to \infty} \Vol_{g_k} (B(x_k, r))$$
  for any $R>r$.

  This implies that for any $\epsilon > 0$, we have
  $$\nu(x) \geq \inf_{R \in (0,1)} \limsup_{k \to \infty} \frac{\Vol_{g_k}(B(x_k, (1-\epsilon) R))}{\omega_n R^n}.$$
  The inequality then follows from the fact that $\inf \sup \geq \sup \inf$ and letting $\epsilon$ go to zero. More precisely,
  \begin{align*}
    \inf_{R \in (0,1)} \limsup_{k \to \infty} \frac{\Vol_{g_k}(B(x_k, (1-\epsilon)R))}{\omega_n R^n} & = (1- \epsilon)^n \inf_{R \in (0,1)} \inf_{k \in \N} \sup_{m \geq k} \frac{\Vol_{g_m}(B(x_m, (1-\epsilon)R))}{\omega_n ((1-\epsilon)R)^n} \\
                                                                                         & \geq (1-\epsilon)^n \inf_{k \in \N} \sup_{m \geq k} \inf_{R \in (0,1)} \frac{\Vol_{g_m}(B(x_m, (1-\epsilon)R))}{\omega_n ((1-\epsilon)R)^n} \\
                                                                                                     & = (1-\epsilon)^n \limsup_{k \to \infty} \inf_{R \in (0,1)} \frac{\Vol_{g_m}(B(x_m, (1-\epsilon)R))}{\omega_n ((1-\epsilon)R)^n} \\
    & = (1-\epsilon)^n \limsup_{k \to \infty} \inf_{r \in (0,1-\epsilon)} \frac{\Vol_{g_m}(B(x_m, r))}{\omega_n r^n}
  \end{align*}
  We have thus established
  $$\nu(x) \geq (1-\epsilon)^n \limsup_{k \to \infty} \inf_{r \in (0,1-\epsilon)} \frac{\Vol_{g_m}(B(x_m, r))}{\omega_n r^n}$$
  holds for every $\epsilon > 0$. The claim $\nu(x) \geq \limsup_{k \to \infty} \nu(x_k)$ follows.

  Now we turn to the second claim of the lemma. Assume that $\nu(x) > 0$. Then the inequality
  $$\mu(B(x,r)) \geq \nu(x) \omega_n r^n$$
  holds. By lemma \ref{LemmaMeasBalls} we have
  $$\mu(B(x,r)) \leq \liminf_{k \to \infty} \Vol_{g_k}(B(x_k, R))$$
  for every $R > r$. Thus for every $\epsilon > 0$ the inequality
  $$\liminf_{k \to \infty} \Vol_{g_k}(B(x_k, r)) \geq \nu(x) \omega_n ((1-\epsilon)r)^n$$
  holds for every $r > 0$. Since this holds for every $\epsilon > 0$, we have in fact
  $$\liminf_{k \to \infty} \Vol_{g_k}(B(x_k, r)) \geq \nu(x) \omega_n r^n$$
  for $r \in (0,1)$. It follows that for every $r \in (0,r)$ there exists an $N(r)$, such that for every $k \geq N(r)$ the inequality
  $$\Vol_{g_k}(B(x_k, r)) \geq \frac{1}{2} \nu(x) \omega_n r^n$$
  is true.

  We note that if $N(r)$ was independent of $r$, we would immediately obtain $\liminf_{k \to \infty} \nu(x_k) \geq \nu(x)/2$. However, we can not quite prove this. Instead, we will use the volume comparison theorem \ref{ThmIntVolComp} to show that there is an $N \in \N$ and a $\delta > 0$, such that for every $k \geq N$ and $r \in (0,1)$, the inequality
  $$\Vol_{g_k}(B(x_k, r)) \geq \delta \omega_n r^n$$
  holds. The constants $N$ and $\delta$ depend only on $n, p, \Lambda, V$ and $\nu(x)$. Thus
  $$\liminf_{k \to \infty} \nu(x_k) \geq \delta = \delta(n, p, \Lambda, V, \nu(x)),$$
  which implies the statement of the lemma.

  The first thing we note is that the volume comparison theorem assures us that there exists a constant depending only on $p$ and $\Lambda$, such that $\Vol_{g_k}(B(x,r)) \leq C V_n(r)$ for $r \in (0,1)$. Since $V_n(r) \lesssim r^n$ for $r \in (0,1)$, it follows that there exists some $A > 0$ such that
  $$\Vol_{g_k}(B(x,r)) \leq A r^n$$
  for $r \in (0,1)$.

  Let $q$ be the midpoint between $n/2$ and $p$, i.e.\@ $q = \frac{1}{2} \left( \frac{n}{2} + p \right)$. Exactly as in the proof of lemma \ref{LemmaVCFLowerBound} we find that there exists some $B$ depending only on $n, p, \Lambda$ such that
  $$k(q,r) \leq B r^{\frac{4 n p}{2p - n}}.$$

  By theorem \ref{ThmIntVolComp}, there exists a constant $C > 0$ depending only on $n$ and $p$, such that for any $0 < r < R < 1$, we have
  $$\left(\frac{\Vol_{g_k}(B(x_k,r))}{V_n(r)}\right)^{1/2q} \geq \left(\frac{\Vol_{g_k}(B(x_k,R))}{V_n(R)}\right)^{1/2q} - C k(q,R)^{1/2q}.$$
  On the one hand, there is a lower bound
  $$\left(\frac{\nu(x) \omega_n R^n}{2V_n(R)}\right)^{1/2q} \geq \eta > 0,$$
  which holds for every $R \in (0,1)$.

  On the other hand, the term $C k(q,R)^{1/2q}$ is going to $0$ as $R$ goes to $0$, because $k(q,R) \leq B R^{\frac{4 n p}{2p - n}}$.

  Thus we may fix some $R_0 \in (0,1)$, such that
  $$\left(\frac{\nu(x) \omega_n R_0^n}{2V_n(R_0)}\right)^{1/2q} - C k(q,R_0)^{1/2q} > \eta/2.$$
  We saw that there exists an $N$, such that for any $k > N$, we have
  $$\Vol_{g_k}(B(x_k, R_0)) \geq \frac{1}{2} \nu(x) \omega_n R_0^n.$$
  Thus we conclude
  $$\left(\frac{\Vol_{g_k}(B(x_k,r))}{V_n(r)}\right)^{1/2q} \geq \eta/2$$
  for every $r \in (0, R_0)$ and $k \geq N$. This implies that there exists some $\delta > 0$, which only depends on $\eta$ and $n$, such that
  $$\Vol_{g_k}(B(x_k, r)) \geq \delta r^n$$
  for every $r \in (0,1)$ and $k \geq N$, which is what we aimed to show.
\end{proof}

Aided with these lemmas, we can start equipping $X^{reg}$ with the structure of a Riemannian manifold. An important consequence of lemma \ref{LemmaVCFLimit} is that if $z \in X^{reg}$, i.e.\@ $\nu(z) > 0$, then for any sequence $z_k \in M_k$ converging to $z$, we have $\liminf_{k \to \infty} \nu(z_k) > 0$ also. By lemma \ref{LemmaVCFLowerBound} there exists some $r_0$ and $\tilde{\epsilon} > 0$, such that $\nu(w) > 0$ for every $w \in B(x_k, r_0)$ and every $w \in B(x_k, r_0)$ and every $k$. By theorem \ref{ThmHarmonicRadius}, this implies that there is a uniform lower bound in $k \in \N$ on the harmonic radius of $(M_k, g_k)$ at $z_k$. This uniform bound depends on $n, p, \Lambda$ and $\nu(z)$ and some arbitrary parameter $Q > 1$. We denote this uniform bound by $\rho_H(z)$. Thus for every $k$ one can fix harmonic coordinates $\varphi_k: B(z_k, \rho_H(z)) \subset M_k \to V_k \subset \R^n$ near $z_k$ satisfying the conditions of \ref{DefHarmonicRadius} and $\varphi_k(z_k) = 0$.

Depending on $\rho_H(z)$ and the choice of $Q > 1$, there exists some $r > 0$, such that $\varphi_k(B(z_k, \rho_H(z))) \supset B(0, r)$. Let $\psi_k : B(0,r) \to M_k$ be the inverse of $\varphi_k$ on $B(0,r)$. Passing to a subsequence, one can then construct a homeomorphism $\psi: B(0,r) \to U \subset X$, where $U$ is an open set containing $z$. Moreover, passing to a subsequence again one can ensure that the metric tensors $g_{ij}^k = \psi_k^* g_k$ on $B(0,r)$ converge to some limit $g_{ij}$ in $C^{\alpha}$.

For every $N \in \N$ consider the subset $\{z \in X^{reg} : \rho_H(z) \geq 1/N\}$ and let $P_N$ be a {\em $1/(2N)$-maximal} subset of this set, i.e.\@ a set maximal with respect to inclusion, such that $d(z, \tilde{z}) \geq 1/(2N)$ for every $z, \tilde{z} \in P_N$ with $z \neq \tilde{z}$. Since $X$ is separable, $P_N$ is a countable set. In fact, $B(x,R) \cap P_N$ is finite for every $R > 0$ and every $N \in \N$, because $B(x,R)$ has finite volume by theorem \ref{ThmIntVolComp} and every ball $B(z, 1/(2N))$ with $1/(2N) < \rho_H(z)$ has measure bounded below by the definition of the harmonic radius and by lemma \ref{LemmaMeasBalls}. By construction $\{B(z, \rho_H(z)) : z \in P_N\}$ is a cover of $\{z \in X^{reg} : \rho_H(z) \geq 1/N\}$. Now let $P = \bigcup_{N \in \N} P_N$. Then $P$ is also countable and $\{B(z, r(z)) : z \in P\}$ is a cover of $X^{reg}$. We can enumerate $P = \{z_l\}_{l \in \N}$.

For every $z_l \in P$ we perform the above construction of a chart $\psi^l: B(0,r_l) \to U_l \subset X^{reg}$ such that $\psi^l(0) = z_l$. Arguing by diagonalisation and using that  $P$ is countable, we may assume that the subsequence from which $\psi^l$ arises is the same for every $l \in \N$.

We have constructed a covering of $X^{reg}$ by sets homeomorphic to balls and thus $X^{reg}$ has the structure of a topological manifold.

For any $l \in \N$, let $g^l$ be the limiting metric coefficients on $B(0,r_l)$ associated to $\psi_l$. It can be shown that $\psi^l : (B(0,r_l), g^l) \to U^l \subset X$ is an isometry, i.e.\@ a distance preserving map. Hence, so are the coordinate changes $\left(\psi^{\tilde{l}} \right)^{-1} \circ \psi^l$. By results of Calabi--Hartman \cite{ch} or Taylor \cite{t}, this implies that the coordinate changes are in fact $C^{1+\alpha}$. Thus $\psi^l$ yields a $C^{1+\alpha}$ differentiable structure on $X^{reg}$ and the $g^l$ form a well-defined $C^{\alpha}$ Riemannian metric on $X^{reg}$, which we call $g_{reg}$.

It remains to show that if $x \in X^{reg}$, then $(M_k, g_k, x_k)$ converges to $(X^{reg}, g_{reg}, x)$ in the sense of definition \ref{DefPVolExConv}. Thus we have to find for every $R > 0$ and $\epsilon > 0$ an open set
$$\Omega \supset B(x,R) \cap \{z \in X : \nu(z) > \epsilon \},$$
and for every $k\geq N(R, \epsilon) > 0$ diffeomorphisms $f_k : \Omega \to \Omega_k \subset M_k$, such that
$$\Omega_k \supset B(x_k,R) \cap \{z \in M_k : \nu(z) > \epsilon \}$$
and $f_k^* g_k$ converges to $g_{reg}$.

This can be done completely analogously as in \cite{p2}, Thm. 11.3.6: for any finite collection of charts $(U_{l_1}, \psi_{l_1}), \ldots, (U_{l_r}, \psi_{l_r})$ as above, one can construct diffeomorphisms $F$ from $\Omega = \bigcup_{i=1}^r U_{l_i}$ into open sets of $M_k$, such that $F_k^*g_k$ converges to $g$.

We still need to see that for every $R > 0$ and $\epsilon >0$ the set $B(x,R) \cap \{z \in X^{reg} : \nu(z) > \epsilon \}$  is contained in finitely many chart domains $U_{l_1}, \ldots, U_{l_r}$. To see this, recall that the function $\rho_H(z)$ is bounded below in terms of $n, p, \Lambda$ and $\nu(z)$ and so there exists some $\epsilon_0 > 0$, such that $\{ z \in X^{reg} : \nu(z) > \epsilon \} \subset \{z \in X^{reg} : \rho_H(z) > \epsilon_0\}$. Hence it is sufficient to show that finitely many chart domains cover $B(x,R) \cap \{z \in X^{reg} : \rho_H(z) > \epsilon_0\}$. To this end, choose $N \in \N$, such that $1/N < \epsilon_0$. Then the charts corresponding to $P_N \cap B(x,R)$ form such a cover and this set is finite, as we remarked above. This finishes the proof of theorem \ref{ThmRegSubconv}.

\subsection{Volume collapsing excluded}
If $X^{reg} = X$, then theorem \ref{ThmRegSubconv} has the particularly useful conclusion that the whole limit space is a $C^{\alpha}$ Riemannian manifold. In this section, we consider two situations, where this is the case: theorem \ref{ThmCptVNCIntC} and corollary \ref{CorCptIntCRicBdd}.

\begin{proof}[Proof of theorem \ref{ThmCptVNCIntC}]
  Let $F : \R_+ \to \R_+$ be a locally bounded function. The class of pointed Riemannian manifolds $(M,g,x_0)$ with
  $$\nu(x) \geq \varepsilon(d(x,x_0)) \quad \text{ and } \quad \Vol_g(B(x_0, R)) \leq F(R)$$
  is precompact with respect to the pointed Gromov--Hausdorff topology. Indeed, let \mbox{$R \gg \rho > 0$}. It is well known that if for every $0 < \rho < R$ the the maximum number $N$ of disjoint $\rho$ balls in $B(x_0, R)$ is uniformly bounded in this class, then the class is precompact in the pointed Gromov--Hausdorff topology. By assumption
  $$\Vol_g(B(x,\rho)) \geq \varepsilon(d(x,x_0)) \rho^n \geq \epsilon_0 \rho^n,$$
  where $\epsilon_0$ is the minimum of $\varepsilon$ on $(0,R)$. Because $\varepsilon$ is locally bounded, this minimum is positive. On the other hand, we have
  $$\Vol_g (B(x_0,R)) \leq F(R).$$
  Let $x_1, \ldots, x_N$ be the centers of $N$ disjoint balls of radius $\rho$ contained in $B(x_0,R)$. Then
  $$N \epsilon_0 \rho^n \leq \Vol_g\left( \sqcup_{i=1}^N B(x_i, \rho) \right) \leq \Vol_g(B(x_0,R)) \leq F(R).$$
  Thus the maximum number of disjoint balls of radius $\rho$ in $B(x_0,R)$ is bounded by $\epsilon_0^{-1} \rho^{-n} F(R)$ and so the class is indeed precompact in the pointed Gromov--Hausdorff topology.
  
  By theorem \ref{ThmIntVolComp}
  $$\Vol_{g_n}(B(x_0, R)) \leq C(p,\Lambda, R).$$
  In particular, the assumptions of the argument above are satisfied for $(M_k, g_k, x_k)$  and so this sequence is indeed precompact in the pointed Gromov--Hausdorff topology. By proposition \ref{PropMeasuredSubconv} the family is also precompact with respect to the pointed measured Gromov--Hausdorff topology.

  The rest of the theorem follows from theorem \ref{ThmRegSubconv}. Indeed, let $(M_k, g_k, x_k)$ be a subsequence converging to $(X, d, \mu, x)$ in the pointed Gromov--Hausdorff topology. We claim that $X^{reg} = X$. To this end, let $y \in X$ and $y_k \in M_k$ be a sequence of points converging to $y$. Then
  $$\nu(y_n) \geq \varepsilon(d_{g_n}(y_n, x_n)).$$
  We note that since $y_n$ converges to $y$, the distances $d_{g_n}(y_n, x_n)$ converge to $d(y,x)$. In particular, there exists a uniform bound $R_0$ of $d_{g_n}(y_n, x_n)$. And so
  $$\nu(y_n) \geq \epsilon_0 = \min_{r \in (0, R_0)} \varepsilon(r) > 0.$$
  By lemma \ref{LemmaVCFLimit} it follows that $\nu(y) \geq \epsilon_0$. Since $y$ was chosen arbitrarily, we conclude that indeed \mbox{$X^{reg} = X$.}
\end{proof}

\begin{proof}[Proof of corollary \ref{CorCptIntCRicBdd}]
  Let $\Lambda > 0, v > 0, p > n/2$.
  
  Suppose $(M_k, g_k, x_k)$ is a sequence of pointed, connected Riemannian manifolds satisfying
  $$\left( \int_{M_k} |\Rm_{g_k}|^p \vol_{g_k} \right)^{1/p} \leq \Lambda,$$
  $$\Ric_{g_k} \geq -(n-1),$$
  $$\Vol_{g_k} (B(x_k, 1)) \geq v.$$
  The volume comparison theorem for a complete Riemannian metric $g$ with $\Ric_g \geq -(n-1)$ asserts that for any $x \in M_k$ and any $R > r > 0$, the volumes of the concentric metric balls $B(x,r)$ and $B(x,R)$ satisfy
  $$\frac{\Vol_{g_k}(B(x,r))}{\Vol_{g_k}(B(x,R))} \geq \frac{V_n(r)}{V_n(R)}.$$
  Note that the minimum
  $$\min_{r \in [0,1]} \frac{V_n(r)}{V_n(1) r^n} = \mu_n$$
  exists and is a positive number.
  
  Thus for any $x \in M_k$ we have
  $$\nu(x) \geq \mu_n \Vol_{g_k}(B(x,1)).$$
  On the other hand, since $M_k$ is connected, the distance $d(x, x_k)$ is finite for any $x_k \in M_k$ and so the volume comparison theorem also gives 
  $$\Vol_{g_k} (B(x,1)) \geq  \frac{V_n(d(x,x_0)+1)}{V_n(1)} \Vol_{g_k}(B(x,d(x,x_0)+1)) \geq \frac{V_n(d(x,x_0)+1)}{V_n(1)}v .$$
  The sequence $(M_k, g_k, x_k)$ thus satisfies the conditions of theorem \ref{ThmCptVNCIntC} with $\varepsilon(r) = \frac{v}{V_n(1)} V_n(r+1)$. The conclusions of this corollary coincide with the conclusions of theorem \ref{ThmCptVNCIntC} and so the proof of the corollary is finished.
\end{proof}

\section{Volume exhausted convergence}
\label{SecVolEx}

\subsection{Constructing a global sublimit}
In this section we prove theorem \ref{ThmGlobalConvergence}.

Suppose $V, \Lambda > 0, p > 1$.

Assume that $(M_k, g_k)$ is a sequence of complete Riemannian manifolds with
\begin{enumerate}
\item $\Vol_{g_k}(M_k) \leq V$,
\item $\left( \int_{M_k} |\Rm_{g_k}|^p \vol_{g_k} \right)^{1/p} \leq \Lambda$,
\item $\Ric_{g_k} \geq -(n-1)$.
\end{enumerate}
To prove the theorem we need to find a subsequence, which converges in the volume exhausted $C^{\alpha}$ Cheeger--Gromov topology to a limiting manifold $(M,g)$.

To this end consider the set $\mc{S}$ of sequences $(x_k)_{k\in \N}$, where $x_k \in M_k$. The reader should keep in mind that this definition depends on the sequence of manifolds $M_k$. In the following we will pass to subsequences without changing the notation for this set. We say that two sequences $(x_k)_k, (y_k)_k \in \mc{S}$ are equivalent, if
$$\sup_{k \in \N} d_{g_k}(x_k, y_k) < \infty.$$
A sequence $(x_k)_k \in \mc{S}$ is called {\em admissible}, if
$$\inf_{k \in \N} \Vol_{g_k} (B(x_k, 1)) > 0.$$
The set of admissible sequences will be denoted by $\mc{S}_+$. As a result of the volume comparison theorem, the assumption $\Ric_{g_k} \geq -(n-1)$ implies that the volumes of balls of radius $1$ are comparable if their distance is finite. Thus $\mc{S}_+$ is closed in $\mc{S}$ with respect to the equivalence relation.

Note that $\mc{S}_+$ may well be empty. Consider for instance a collapsing sequence of flat tori. If $\mc{S}_+$ is empty, this implies that for every $\epsilon > 0$, there exists an $N \in \N$, such that $\{x \in M_k : \Vol_{g_k}(B(x,1)) > \epsilon \}$ is empty for all $k \geq N$. In this case, $(M_k, g_k)$ converges to the empty manifold in the volume exhausted Cheeger--Gromov topology.

Now consider the quotient $\faktor{\mc{S}_+}{\sim}$.
\begin{prop}
  There is a subsequence of $(M_k, g_k)$, such that the set $\faktor{\mc{S}_+}{\sim}$ is finite or countable.
\end{prop}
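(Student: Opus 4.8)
The plan is to show that the equivalence classes in $\faktor{\mc{S}_+}{\sim}$ cannot accumulate too densely, using the fixed total volume bound $\Vol_{g_k}(M_k) \leq V$ as the fundamental constraint. The key observation is that two admissible sequences $(x_k)_k$ and $(y_k)_k$ are inequivalent precisely when $d_{g_k}(x_k, y_k) \to \infty$ along a subsequence, so that their unit balls $B(x_k,1)$ and $B(y_k,1)$ become disjoint for large $k$. Since every admissible sequence has $\inf_k \Vol_{g_k}(B(x_k,1)) > 0$, each inequivalent class contributes a definite amount of volume to $M_k$, and the total is capped by $V$. First I would want to turn this into a uniform packing bound, but there is a subtlety: different admissible sequences may have different volume lower bounds, so I must stratify.

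The concrete steps would be as follows. For each $m \in \N$, let $\mc{S}_+^{(m)}$ denote the set of (classes of) admissible sequences with $\inf_k \Vol_{g_k}(B(x_k,1)) > 1/m$. Then $\faktor{\mc{S}_+}{\sim} = \bigcup_m \mc{S}_+^{(m)}$, so it suffices to show each $\mc{S}_+^{(m)}$ is finite; a countable union of finite sets is countable. To bound the cardinality of $\mc{S}_+^{(m)}$, I would argue by contradiction: suppose it contains classes $[(x_k^1)], \ldots, [(x_k^{L})]$ with $L > mV$ (after noting, via volume comparison as in the closedness argument for $\mc{S}_+$, that one may extract representatives). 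Since these classes are pairwise inequivalent, for each pair the distances diverge, so passing to a subsequence (in $k$) and using that there are finitely many pairs, I can arrange that the balls $B(x_k^i,1)$, $i = 1, \ldots, L$, are pairwise disjoint for all large $k$. Then
$$V \geq \Vol_{g_k}(M_k) \geq \sum_{i=1}^{L} \Vol_{g_k}(B(x_k^i,1)) > L \cdot \frac{1}{m} > V,$$
a contradiction. Hence $\mc{S}_+^{(m)}$ is finite with at most $\lfloor mV \rfloor$ elements.

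The main obstacle is the interaction between the two passages to subsequences. The disjointness of the balls is only achieved after extracting a $k$-subsequence, and a priori this subsequence could depend on $m$. The cleanest fix is to prove the finiteness bound $|\mc{S}_+^{(m)}| \leq mV$ for each fixed $m$ \emph{without} changing the sequence $(M_k,g_k)$ itself: one observes that if $L > mV$ inequivalent classes existed, then for \emph{every} $k$ large enough the pairwise distances are large enough to force disjoint balls, which is exactly what pairwise inequivalence ($\sup_k d_{g_k} = \infty$, hence $\liminf$ considerations) delivers along some $k$-subsequence — and a single such $k$ with disjoint balls already yields the contradiction with the volume bound. Thus no subsequence of $(M_k,g_k)$ is actually needed for the counting, only for isolating witnesses; since the bound $mV$ is uniform, $\faktor{\mc{S}_+}{\sim}$ is at most countable. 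I would take care to state precisely how pairwise inequivalence yields a single index $k$ at which all $\binom{L}{2}$ pairs simultaneously have disjoint unit balls, as this is where the argument is most delicate.
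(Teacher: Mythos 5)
Your stratification of $\faktor{\mc{S}_+}{\sim}$ into the sets $\mc{S}_+^{(m)}$ and the packing bound $|\mc{S}_+^{(m)}|\leq mV$ reproduce, up to reorganization, the counting mechanism of the paper's proof (which argues by contradiction, uses the pigeonhole principle to land in one stratum, and then extracts $2n\lceil V\rceil$ classes). The genuine gap sits exactly at the point you flag as ``most delicate'' and then assert can be handled: pairwise inequivalence does \emph{not} produce a single index $k$ at which all $\binom{L}{2}$ pairs of unit balls are simultaneously disjoint. Inequivalence of a pair only gives $\sup_k d_{g_k}(x_k,y_k)=\infty$, i.e.\@ divergence along a set of indices that \emph{depends on the pair}, and these witnessing sets can be pairwise disjoint. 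Concretely, take admissible $(x_k^1),(x_k^2)$ with $d_{g_k}(x_k^1,x_k^2)\to\infty$ and set $x_k^3=x_k^1$ for $k$ even, $x_k^3=x_k^2$ for $k$ odd: all three pairs are inequivalent, yet at every single $k$ two of the three unit balls coincide. So the sentence ``a single such $k$ with disjoint balls already yields the contradiction'' has no $k$ to apply to, and finiteness of $\mc{S}_+^{(m)}$ does not follow from your argument.

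Moreover, the stronger conclusion you aim for --- countability of $\faktor{\mc{S}_+}{\sim}$ for the \emph{original} sequence, with no passage to a subsequence --- is false, so the gap cannot be patched within your framework. Let $M_k$ be two copies of a fixed closed hyperbolic surface joined by a hyperbolic collar whose core geodesic has length tending to $0$: the volume, $L^p$ curvature and Ricci hypotheses all hold, the two thick parts have definite volume and mutual distance $D_k\to\infty$, and an admissible sequence amounts (up to bounded perturbation) to a choice function $j:\N\to\{1,2\}$ recording which copy it occupies at time $k$. Two such sequences are equivalent iff $j=j'$ for all but finitely many $k$, so $\faktor{\mc{S}_+}{\sim}$ contains a copy of $\{1,2\}^{\N}$ modulo finite differences, which is uncountable. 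This shows the subsequence extraction in the statement is not a removable convenience but is where the content of the proposition lies; the paper's proof is organized entirely around producing one subsequence along which \emph{all} pairwise distances diverge (and, as the same example indicates, even that simultaneous-divergence step for three or more classes is the delicate heart of the matter and deserves scrutiny rather than being bypassed).
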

\begin{proof}
  Assume that the proposition is false. Then for every subsequence $\faktor{\mc{S}_+}{\sim}$ is uncountable. 

  Pick for every element $E$ in $\faktor{\mc{S}_+}{\sim}$ a representative sequence $(x^E_k)_k \in E$. This way we may define a function
  $$f: \faktor{\mc{S}_+}{\sim} \to \R_+$$
  $$E \mapsto \inf_{k \in \N} \Vol_{g_k} (B(x_k^E, 1)).$$
  By the pigeonhole principle, there exists some $n \in \N$, such that $S = f^{-1}((1/n, \infty))$ is an infinite set.

  By definition of the equivalence relation, we have that
  $$\sup_{k \in \N} d_{g_k} ( x_k^{E_1}, x_k^{E_2}) = \infty$$
  for any distinct $E_1, E_2 \in S$. Passing to a subsequence we may assume that in fact
  $$\lim_{k \to \infty} d_{g_k} ( x_k^{E_1}, x_k^{E_2}) = \infty.$$
  Choose $N = 2n \lceil V \rceil$ equivalence classes $E_1, \ldots, E_{N}$ in $S$. Inductively passing to a subsequences, we may assume that
  $$\lim_{k \to \infty} d_{g_k} ( x_k^{E_i}, x_k^{E_j}) = \infty$$
  for all $1 \leq i < j \leq N$. By assumption $\inf_{k \in \N} \Vol_{g_k} (B(x_k^E, 1)) > 1/n$ for every $E \in S$. For sufficiently large $k$, we thus get
  $$\Vol_{g_k}(M_k) \geq \sum_{i=1}^{2n} \Vol_{g_k}(B(x_k^{E_i}, 1)) \geq N/n = 2 \lceil V \rceil > V,$$
  since the balls are pairwise disjoint. Clearly, this contradicts the standing assumption that \mbox{$\Vol_{g_k}(M_k) \leq V$}.
\end{proof}
Hence we may assume that $(M_k, g_k)$ is such that $\faktor{\mc{S}_+}{\sim}$ is countable and thus we identify $\faktor{\mc{S}_+}{\sim}$ with $\N$ and choose for every $l \in \N = \faktor{\mc{S}_+}{\sim}$ a sequence $x_k^l$ with the property $\inf_{k \in \N} \Vol_{g_k}(B(x_k,1)) > 0$. (If $\faktor{\mc{S}_+}{\sim}$ is finite, we instead identify $\faktor{\mc{S}_+}{\sim}$ with $\{1, \ldots ,  N \}$. This does not change the argument below.)

Recapping, we have selected a subsequence of $(M_k, g_k)$ and from this subsequence we have defined $x_k^l \in M_k$, such that
\begin{enumerate}
\item $\liminf_{k \in \N} \Vol_{g_k}(B(x_k^l, 1)) =: \epsilon_l > 0$,
\item $\lim_{k \in \N} d_{g_k}(x_k^{l_1}, x_k^{l_2}) = \infty$, if $l_1 \neq l_2$.
\end{enumerate}
By theorem \ref{CorCptIntCRicBdd}, we can find for every $l$ a subsequence of $(M_k, g_k, x_k^l)$, which converges in the pointed $C^{\alpha}$ Cheeger--Gromov topology. This can be done iteratively, so that for every $l \in \N$, the subsequence chosen for $(M_k, g_k, x_k^{l+1})$ is a subsequence of the subsequence chosen for $(M_k, g_k, x_k^l)$. We denote these subsequences by $\xi(k, l)$, i.e.\@ $(M_{\xi(k,l)}, g_{\xi(k,l)}, x_{\xi(k,l)}^l)$ converges in the pointed $C^{\alpha}$ Cheeger--Gromov topology. Choosing $\sigma(k) = \xi(k,k)$, for every $l \in \N$, the sequence $(M_{\sigma(k)}, g_{\sigma(k)}, x_{\sigma(k)}^l)$ converges in the pointed $C^{\alpha}$ Cheeger--Gromov topology.

Going forward, to lighten the notational load, the sequence $(M_{\sigma(k)}, g_{\sigma(k)}, x_{\sigma(k)}^l)$ will again be denoted by $(M_k, g_k, x_k^l)$. For $l \in \N$ we denote the limit space by $(M^l, g^l, x^l)$.

Now we claim that $(M_k, g_k)$ converges in the volume exhausted $C^{\alpha}$ Cheeger--Gromov topology to $\sqcup_{l \in \N} M^l$ equipped with the metric $g$, which coincides with $g^l$ on $M^l$.

To this end let $\epsilon > 0$. First, we note that there exists an $R > 0$, such that
$$\bigcup_{l \in \N} B(x_k^l, R) \supset \{x \in M_k : \Vol_{g_k} (B(x, 1)) > \epsilon \}$$
for all sufficiently large $k$. Indeed, if not, there would exist a sequence $z_k \in M_k$ with $\Vol_{g_k}(B(z_k,1)) > \epsilon$ for every $k$ and $d_{g_k}(z_k, x_k^l) \to \infty$ for every $l \in \N$, in contradiction to the definition of the $x_k^l$.

For every $l \in \N$ we have $B(x^l, R) \supset \{x \in M^l : \Vol_{g^l} (B(x, 1)) > \epsilon \}$ by convergence. Taking the union, we obtain
$$\bigcup_{l \in \N} B(x^l, R) \supset \left\{x \in M : \Vol_g (B(x, 1)) > \epsilon \right\}$$
By the definition of the pointed $C^{\alpha}$ Cheeger--Gromov topology, we know that for every $l\in \N$ and every $R$ there exists an open set $\Omega_l \supset B(x^l, R)$ and embeddings $f^l_k : \Omega_l \to M_k$, such that $f_k^l(\Omega_l) \supset B(x_k^l,R)$, such that $f_k^{l*} g_k$ converges to $g$ in $C^{\alpha}$ on compact subsets of $\Omega_l$. Note that the subsets $\Omega_l$ are pairwise disjoint, if they are -- as we may assume -- contained in $B(x^l, 2R)$. In that case, the sets $f_k^l (\Omega_l)$ are also pairwise disjoint, if $k$ is sufficiently large.

Hence we may define $\Omega = \cup_{l \in \N} \Omega_l$ and embeddings $f_k : \Omega \to M_k$, which then evidently satisfy the conditions of definition \ref{DefVolExConv}. This finishes the proof of theorem \ref{ThmGlobalConvergence}.

\subsection{Collapsing graphs}
\label{SectCollapsingGraphs}
In this section we prove theorem \ref{ThmCollapsingGraphs}. The proof is broken into a series of lemmas.

\begin{defn}
  \label{DefManifoldWithEnds}
  A smooth $n$ dimensional manifold $M$ is called a {\em manifold with finitely many ends} if there exists a compact $n$ dimensional submanifold $K$ with boundary and $\Omega = M \backslash K$ is diffeomorphic to $\partial K \times (0, \infty)$.
\end{defn}
One of the assumptions of the theorem was that $M$ is a manifold with finitely many ends. We identify $\Omega$ with $X \times (0, \infty)$, where $X = \partial K$.

The ``finite'' in the definition is justified because we assumed $K$ to be compact. Thus $\partial K = X$ is compact as well and in particular has only finitely many components.

Another assumption of the theorem is that $(M,g)$ is a complete Riemannian manifold and that $\Vol_g(M) = V < \infty$. 

\begin{lemma}
  For any $p \in M$ and $x \in X$ we have
  $$\lim_{t \to \infty} d_g(p, (x,t)) = \infty,$$
  $$\lim_{t \to \infty} \Vol_g(B((x,t),1)) = 0.$$
\end{lemma}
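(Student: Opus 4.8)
We have a complete Riemannian manifold $(M,g)$ with finite volume $\Vol_g(M) = V < \infty$, which is a manifold with finitely many ends. The end region $\Omega = M \setminus K$ is identified with $X \times (0,\infty)$, where $X = \partial K$ is compact. We must prove two limits as $t \to \infty$ along a ray $(x,t)$ in the end: the distance to a fixed point $p$ tends to infinity, and the volume of unit balls tends to zero. Let me think about how to establish each.

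**The distance limit.** For the first limit, I want to show $d_g(p,(x,t)) \to \infty$. The natural strategy is by contradiction combined with completeness. Suppose the distances stay bounded along some sequence $t_j \to \infty$. Then the points $(x,t_j)$ all lie in a fixed closed ball $\overline{B}(p,C)$. By completeness (Hopf–Rinow), closed bounded sets are compact, so a subsequence of $(x,t_j)$ converges to some limit point $q \in M$. But the projection $M \to [0,\infty)$ sending the end coordinate (extended arbitrarily to $K$) to $t$ is continuous, so $t_j \to \infty$ forces the limit to escape every compact set — more carefully, the sequence $(x,t_j)$ leaves every compact subset of $M$ since the $t$-coordinate diverges and $X$ is fixed. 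This is the contradiction: a sequence in a compact set cannot simultaneously leave every compact set. So $d_g(p,(x,t)) \to \infty$.

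**The volume limit — the main obstacle.** The second limit is the interesting one and where I expect the real work. The idea is that finite total volume forces the unit balls far out in the end to have vanishing volume. Concretely, consider a sequence $t_j \to \infty$ and the unit balls $B_j = B((x,t_j),1)$. Since $d_g(p,(x,t_j)) \to \infty$ by the first part, for $j$ large these balls are disjoint from any fixed compact set; more is needed, though — I want to extract an infinite family of \emph{pairwise disjoint} unit balls. Choose $t_{j_1} < t_{j_2} < \cdots$ inductively so that $d_g((x,t_{j_{i+1}}),(x,t_{j_i})) > 2$ for all $i$ — this is possible precisely because the distances diverge, so consecutive points can be spread out. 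Then the balls $B((x,t_{j_i}),1)$ are pairwise disjoint, whence
$$\sum_{i=1}^{\infty} \Vol_g\bigl(B((x,t_{j_i}),1)\bigr) \leq \Vol_g(M) = V < \infty.$$
A convergent series has terms tending to zero, so $\Vol_g(B((x,t_{j_i}),1)) \to 0$ along this subsequence.

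**Upgrading to the full limit.** The previous paragraph only gives the limit along a carefully chosen subsequence, but the claim is for the full limit $t \to \infty$. To upgrade, I would argue that if the limit failed, there would be some $\delta > 0$ and a sequence $s_j \to \infty$ with $\Vol_g(B((x,s_j),1)) \geq \delta$ for all $j$. Applying the disjointification procedure above to \emph{this} sequence — selecting a sub-sequence spread out by distance greater than $2$ — produces infinitely many pairwise disjoint unit balls each of volume at least $\delta$, contradicting $\Vol_g(M) = V < \infty$. Hence no such $\delta$ exists and $\Vol_g(B((x,t),1)) \to 0$. The one technical point to verify carefully is that the distance divergence from the first part genuinely permits the inductive spreading-out (equivalently, that $d_g((x,t),(x,s)) \to \infty$ as $\min(s,t) \to \infty$ with $s,t$ separated); this follows since both points go to infinity in distance from $p$ while, along the fixed ray $\{x\} \times (0,\infty)$, the intrinsic distance is monotone and unbounded in $t$. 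This is the step I would be most careful to state precisely.
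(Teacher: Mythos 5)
Your proof is correct and is precisely the argument the paper has in mind: the paper omits the proof with the remark that the lemma ``follows easily from the assumption of completeness and finiteness of the volume,'' and you use exactly completeness (Hopf--Rinow, a sequence escaping every compact set cannot stay in a closed bounded ball) for the divergence of $d_g(p,(x,t))$, and finiteness of $\Vol_g(M)$ applied to infinitely many pairwise disjoint unit balls for the vanishing of $\Vol_g(B((x,t),1))$. One small caveat: the justification in your closing paragraph --- that two points both going to infinity in distance from $p$ must be far from each other --- is false in general, but it is also not needed, since the inductive selection only requires the newly chosen point to be far from the \emph{finitely many} points already chosen, and this follows from the reverse triangle inequality $d_g((x,s_j),(x,t_{j_i})) \geq d_g(p,(x,s_j)) - d_g(p,(x,t_{j_i}))$ together with the first part of the lemma.
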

This lemma follows easily from the assumption of completeness and finiteness of the volume of $(M,g)$ and so we omit its proof.

From the lemma also follows
$$\sup_{(x,t) \in X \times (T, \infty)} \Vol_g(B((x,t),1)) \to 0$$
as $T \to \infty$.

Define
$$v : M \to \R_+,$$
$$x \mapsto \Vol_g(B(x,1)).$$
The next lemma is a consequence of the previous one.
\begin{lemma}
  \label{LemmaVolEnds}
  For any $\epsilon > 0$, there exists a $T > 0$, such that
  $$X \times (T, \infty) \subset \{v \leq \epsilon\}.$$
  For any $T > 0$, there exists a $\delta > 0$, such that
  $$\{v \leq \delta \} \subset X \times (T, \infty).$$
\end{lemma}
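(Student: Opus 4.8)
The first assertion is immediate from the displayed consequence of the previous lemma. Since $\sup_{(x,t)\in X\times(T,\infty)} \Vol_g(B((x,t),1)) \to 0$ as $T \to \infty$, the plan is simply: given $\epsilon > 0$, choose $T$ so large that this supremum is at most $\epsilon$. Then $v \le \epsilon$ at every point of $X \times (T,\infty)$, which is exactly the first inclusion.

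For the second assertion I would argue by combining compactness with lower semicontinuity on the complementary region. Set $C_T := M \setminus (X \times (T,\infty))$. The structural point is that $C_T$ is compact: it is the union of the compact core $K$ with the truncated collar $X \times (0,T]$, and after gluing $X \times \{0\}$ to $\partial K$ it is identified with the compact manifold-with-boundary obtained by attaching $X \times [0,T]$ to $K$ along $\partial K$. Here the hypotheses that $K$ is compact (so that the cross-section $X = \partial K$ is compact) and that $T$ is finite are what keep the ends from escaping.

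Next I would record two elementary properties of $v$. First, $v > 0$ everywhere, since $B(y,1)$ is a nonempty open set and therefore has positive Riemannian volume. Second, $v$ is lower semicontinuous: if $y_n \to y$, then for large $n$ the triangle inequality gives $B(y_n,1) \supset B(y, 1 - d(y,y_n))$, so
$$v(y_n) \ge \Vol_g\big(B(y, 1 - d(y,y_n))\big).$$
Letting $n \to \infty$, the radii $1 - d(y,y_n)$ increase to $1$ and the corresponding balls exhaust $B(y,1)$, so continuity from below of the measure yields $\liminf_n v(y_n) \ge v(y)$. With these in hand the conclusion is soft: a lower semicontinuous function attains its infimum on the compact set $C_T$, so $m := \min_{C_T} v$ is attained and, by positivity of $v$, satisfies $m > 0$. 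Taking any $\delta$ with $0 < \delta < m$ forces $\{v \le \delta\}$ to be disjoint from $C_T$, i.e.\ $\{v \le \delta\} \subset X \times (T,\infty)$.

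The only step that demands genuine care is the compactness of $C_T$: one must ensure that truncating the ends at the finite height $T$ over the compact base $X$ produces a genuinely compact set, with no mass escaping as the collar parameter $t \to 0$ (which is handled by the identification with the glued collar, or alternatively by noting $C_T$ is closed and bounded and invoking Hopf--Rinow via completeness of $(M,g)$). Everything else — the positivity and lower semicontinuity of $v$, and the extraction of a positive minimum — is routine.
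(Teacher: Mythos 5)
Your proof is correct, and it fills in the details in the way the paper intends: the paper omits the argument, remarking only that the lemma follows from the preceding one, and your two steps (uniform decay of $v$ on the ends for the first inclusion; positivity plus lower semicontinuity of $v$ on the compact set $C_T = K \cup (X \times (0,T])$ for the second) are exactly the natural completion. The one point needing care --- compactness of $C_T$, which rests on reading the end structure as a collar so that $t \to 0$ approaches $\partial K$ rather than escaping to infinity --- is the reading the paper adopts and is the only one under which the second inclusion can hold, so your treatment is adequate.
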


Given a Riemannian manifold $(M,g)$, $\epsilon > 0$ and $\lambda_+ > 1 > \lambda_0 > \lambda_- > 0$, we will construct a graph as follows.

Let
$$B = \{v \leq \lambda_0 \epsilon\}.$$
Next we define
$$C = \bigcup \left\{ Z \in \pi_0(B): v_{\min}(Z) \leq \lambda_- \epsilon \right\}.$$
We then let
$$D = C \cup \bigcup \left\{Z \in \pi_0(\overline{M \backslash C}) : v_{\max}(Z) \leq \epsilon \right\}.$$
Now we define
$$V_{\epsilon}(M,g) = \pi_0(D) \cup \{ Z \in \pi_0 (\overline{M \backslash D}) : v_{\max}(Z) > \lambda_+ \epsilon\}.$$

The set $V_{\epsilon}(M,g)$ will be the set of vertices of our graph $\Gamma_{\epsilon}(M,g)$.

For $U, V \subset M$ we define
$$\delta_g(U,V) = \inf_{(u,v) \in U\times V} d_g(u,v).$$
To define the edge structure, we say that there is an edge between $Z_1, Z_2 \in V_{\epsilon}(M,g)$ if for any $Z_3 \in V_{\epsilon}(M,g)$ with $Z_3 \neq Z_1, Z_2$ we have
$$\delta_g(Z_1, Z_2) < \delta_g(Z_1, Z_3) + \delta_g(Z_3, Z_2).$$

A brief comment on the construction: as indicated in the introduction, we would like to define the vertices to be the connected components of $\{v \leq \epsilon\}$ and of $\{v \geq \epsilon\}$ and define edges if and only if two such components intersect non-trivially. But since the function $v$ may be very unruly, one may get a truly unmanagable graph out of this. The iterative coarsenings allow us to get a handle on the graph. Two kinds of components are left in the final graph. One type with $v_{\min}(Z) \leq \lambda_- \epsilon$ and one type with $v_{\max}(Z) \geq \lambda_+ \epsilon$. In addition, when passing from $C$ to $D$, we merge components with $v_{\min}(Z) \geq \lambda_- \epsilon$ and $v_{\max}(Z) \leq \lambda_0 \epsilon$ with those satisfying $v_{\min}(Z) \leq \lambda_- \epsilon$. The purpose of this last step is to ensure that all of $M$ is covered by the components in $V_{\epsilon}(M,g)$.

The choice of $\lambda_-, \lambda_0, \lambda_+$ will be made in dependence of $(M,g)$ and is contingent on how the (somewhat aribtrary) end structure and the volume function $v$ interact.

\begin{lemma}
  \label{LemmaComponentsVolEnds}
  There exists $\epsilon_0 > 0$ such that for any $\epsilon \in (0, \epsilon_0)$, any $\lambda_0 \in (0,1)$ and $\lambda_- < \lambda_0$ sufficiently small there are bijections
  $$\pi_0(D) \to \pi_0(X \times (0, \infty))$$
  and
  $$\pi_0(\overline{M \backslash D}) \to \pi_0(K).$$
\end{lemma}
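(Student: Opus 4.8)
The plan is to exploit the collar structure $M = K \cup (X \times [0,\infty))$, with $X \times \{0\}$ identified with $\partial K$ and $\Omega = X \times (0,\infty)$, so that $\pi_0(\Omega) = \pi_0(X)$ (one component per end) and $\pi_0(K) \cong \pi_0(M)$. The starting point is that the density $v(x) = \Vol_g(B(x,1))$ is continuous (it is the integral of the indicator of an open ball, and distance spheres have measure zero), hence lower semicontinuous, so on the \emph{compact} core $K$ it attains a positive minimum $v_0 > 0$. First I would fix $\epsilon_0 > 0$ small: combining $\epsilon_0 \leq v_0$ with Lemma \ref{LemmaVolEnds} (part 2, applied at a prescribed depth $T_0$) I arrange that $\{v \leq \epsilon_0\} \subset X \times (T_0,\infty)$ lies in a deep collar and avoids $K$. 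For any $\epsilon \in (0,\epsilon_0)$ and $\lambda_0 \in (0,1)$ this forces $B = \{v \leq \lambda_0 \epsilon\} \subset \Omega$, and since every set adjoined in passing to $C$ and then $D$ consists of points with $v \leq \epsilon < v_0$, the whole of $D$ lies in $\Omega$ and is disjoint from $K$. In particular $K \subset \overline{M \setminus D}$ and, because $D$ contains a full deep collar (below), $\overline{M \setminus D}$ is compact.

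For the first bijection $\pi_0(D) \to \pi_0(\Omega)$ I would first identify the components of $C$. By Lemma \ref{LemmaVolEnds} (part 1, at level $\lambda_0\epsilon$) there is $T_1$ with the connected deep collar $X_i \times (T_1,\infty) \subset B$ for each end $i$; the component of $B$ containing it, call it $Z_i^{\mathrm{out}}$, satisfies $v_{\min}(Z_i^{\mathrm{out}}) = 0 \leq \lambda_- \epsilon$ and so lies in $C$, giving at least one component of $C$ per end. Every \emph{other} component of $B$ is disjoint from all deep collars, hence contained in the relatively compact region $\bigcup_i X_i \times (0,T_1]$, on which continuity and compactness yield a uniform lower bound $v \geq c > 0$. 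Choosing $\lambda_- < c/\epsilon$ (this is exactly the ``$\lambda_-$ sufficiently small'' clause, with $c$ depending on $\epsilon, \lambda_0$ and $(M,g)$) excludes all these components from $C$, so $C = \bigsqcup_i Z_i^{\mathrm{out}}$ and $\pi_0(C) \leftrightarrow \pi_0(\Omega)$. Passing from $C$ to $D$ only adjoins components $Z$ of $\overline{M\setminus C}$ with $v_{\max}(Z) \leq \epsilon$; any such $Z$ whose closure missed $C$ would be a clopen, hence compact, component of $M$ with $v \leq \epsilon < v_0$, which is impossible, so every adjoined piece attaches to some $Z_i^{\mathrm{out}}$ and $\pi_0(D) = \pi_0(C) \leftrightarrow \pi_0(\Omega)$.

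For the second bijection I would show the natural map $\pi_0(K) \to \pi_0(\overline{M \setminus D})$ (each component of $K$ lying in a unique component of $\overline{M\setminus D}$) is a bijection. Injectivity is immediate from $\pi_0(K) \cong \pi_0(M)$, since distinct components of $K$ lie in distinct components of $M \supset \overline{M\setminus D}$. For surjectivity I identify the components of $\overline{M\setminus D}$ with those components $Z$ of $\overline{M \setminus C}$ having $v_{\max}(Z) > \epsilon$; the ones meeting $K$ have $v_{\max} \geq v_0 > \epsilon$ and match $\pi_0(K)$ exactly, so it remains to rule out spurious components not meeting $K$.

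The main obstacle is precisely this last point: excluding ``island'' components of $\overline{M\setminus C}$ that are contained in a single end, disjoint from $K$, yet carry $v_{\max} > \epsilon$. Such an island would have to be separated from the inner, $K$-attached region by a full cross-sectional barrier $X_i \times \{s\}$ lying inside $C \subset \{v \leq \lambda_0 \epsilon\}$. I would kill this by a separation argument: on such a barrier every point has $v \leq \lambda_0\epsilon < \epsilon_0$, so by Lemma \ref{LemmaVolEnds} the barrier sits at large depth, while Lemma \ref{LemmaVolEnds} (part 1) also gives $v < \epsilon$ uniformly beyond some $T_\epsilon$; pushing the barrier, and hence the island it encloses, past $T_\epsilon$ contradicts $v_{\max} > \epsilon$ on the island. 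Making this separation/barrier step rigorous — extracting a genuine cross-sectional barrier from mere disconnection of $\Omega_i \setminus C$, and choosing $\epsilon_0$ and $\lambda_-$ so the depth estimates close up — is the delicate part of the argument; everything else is bookkeeping with the collar decomposition. Once the islands are excluded, $\pi_0(\overline{M\setminus D}) \leftrightarrow \pi_0(K)$ follows, completing the proof.
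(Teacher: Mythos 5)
Your treatment of the first bijection is essentially the paper's argument: choose $\epsilon_0$ so that $\{v\leq\epsilon_0\}$ avoids $K$, use Lemma \ref{LemmaVolEnds} to place a full deep collar $X\times(T,\infty)$ inside $B$, and take $\lambda_-$ so small that $\{v\leq\lambda_-\epsilon\}$ lies beyond depth $T$; then $C$ restricted to each end is the single component of $B$ containing that end's deep collar, every piece adjoined in passing from $C$ to $D$ is forced to attach to $C$ (your clopen argument) and lies in a single end, so $\pi_0(D)\cong\pi_0(X\times(0,\infty))$. This part is correct and coincides with what the paper does via the maps $F$, $G$, $H_T$.

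The second bijection is where the proposal stops short, and the gap is genuine. Surjectivity of $\pi_0(K)\to\pi_0(\overline{M\setminus D})$ amounts to excluding a component $W$ of $\overline{M\setminus C}$ with $v_{\max}(W)>\epsilon$ that is separated from $K$ by $C$; you flag this, but the barrier sketch you offer does not close. First, disconnection of $L\times(0,\infty)\setminus C$ produces no cross-sectional barrier $L\times\{s\}$ inside $C$. Second, and more seriously, the fact that the separating set lies in $\{v\leq\lambda_0\epsilon\}$ only places it beyond the first depth at which $v$ reaches $\lambda_0\epsilon$, not beyond a depth $T_\epsilon$ past which $v<\epsilon$ uniformly; an island can therefore sit in the intermediate band where $v$ climbs back above $\epsilon$. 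Concretely, picture a region of volume greater than $\epsilon$ (a ``bubble'') hanging off the collar at moderate depth by a thin neck, with the neck and the adjacent stretch of collar all lying in $\{v\leq\lambda_0\epsilon\}$ and connected inside $B$ to the deep end: the component of $B$ containing the deep collar then separates the bubble from $K$, it belongs to $C$ for \emph{every} choice of $\lambda_-$ (its $v_{\min}$ is $0$), and the bubble persists into $\overline{M\setminus D}$ because its $v_{\max}$ exceeds $\epsilon$. So shrinking $\lambda_-$, which is the only remaining freedom, does not remove the obstruction. For comparison, the paper disposes of the second bijection with the remark that the argument is ``completely analogous'' to the first; the genuinely analogous assertion is precisely that every component of $\overline{M\setminus D}$ contains a component of $K$, i.e.\@ the no-island claim, so you have correctly isolated the one step that neither your sketch nor the paper's own write-up makes explicit. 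To turn the proposal into a proof you would need an actual argument excluding such configurations; the inclusions $\{v\leq\lambda_-\epsilon\}\subset X\times(T,\infty)\subset\{v\leq\lambda_0\epsilon\}$ by themselves do not yield it.
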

\begin{proof}
  For any $T > 0$ we may define
  $$H_T : \pi_0(X \times (0, \infty)) \to \pi_0(X \times (T, \infty))$$
  $$W \mapsto W \cap X \times (T, \infty).$$
  This is obviously a bijection. Any $Z \in \pi_0(X \times (T, \infty))$ is of the form $L \times (T, \infty)$ and so the inverse of $H_T$ is given by $H_T^{-1}(Z) = H_T^{-1}(L \times (T, \infty)) = L \times (0, \infty)$.

  Using lemma \ref{LemmaVolEnds} we choose first $\epsilon_0 > 0$, such that $\{v \leq \epsilon_0 \} \subset X \times (0, \infty)$. Then let $\epsilon \in (0, \epsilon_0)$ and $\lambda_0 \in (0,1)$. Applying lemma \ref{LemmaVolEnds} again, we find a $T > 0$, such that $X \times (T, \infty) \subset \{v \leq \lambda_0 \epsilon\}$. Now we choose $\lambda_- \in (0, \lambda_0)$, such that $\{v \leq \lambda_- \epsilon\} \subset X \times (T, \infty)$. In conclusion, we have
  $$\{v \leq \lambda_- \epsilon\} \subset X \times (T, \infty) \subset \{v \leq \lambda_0 \epsilon\} \subset \{v \leq \epsilon\} \subset X \times (0, \infty).$$
  With this choice of $\lambda_, \lambda_0, \lambda_+$ it follows that
  $$X \times (T, \infty) \subset D \subset X \times (0, \infty).$$
  Thus we can define maps
  $$F : \pi_0(X \times (T, \infty)) \to \pi_0(D) \quad \text{and} \quad G: \pi_0(D) \to \pi_0(X \times (0, \infty))$$
  by letting $F(Z)$ be the unique component in $\pi_0(D)$, which contains $Z$, and likewise we define $G(Z)$ to be the unique component in $\pi_0(X \times (0, \infty))$, which contains $Z$.

  We claim that $G \circ F = H_T^{-1}$ and $F \circ H_T \circ G = \id$.

  Let $Z = L \times (T, \infty) \in \pi_0(X \times (T, \infty))$. Then $G(F(Z)) \supset F(Z) \supset Z = L \times (T, \infty)$. The connected set containing $L \times (T, \infty)$ in $X \times (0, \infty)$ is $L \times (0, \infty)$. Thus $G(F(Z)) = L \times (0, \infty)$ and we recall that $H_T^{-1}( L \times (T, \infty)) = L \times (0, \infty)$.

  Conversely, suppose $Z \in \pi_0(D)$. Then $G(Z) = L \times (0, \infty) \supset Z$. Since $Z \supset L \times (T, \infty)$, it follows that $H_T(G(Z)) = L \times (T, \infty) = Z \cap X \times (T, \infty)$. Then by definition $F(H_T(G(Z)))$ is the component of $D$ containing $Z \cap X \times (T, \infty)$. But this is clearly $Z$ and so $F(H_T(G(Z)) = Z$.

  We now address the second part. Here we first note that we can define a map
  $$L : \pi_0(K) \to \pi_0(M)$$
  by taking any component in $\pi_0(K)$ and mapping it to the component containing $\pi_0(M)$.

  The inclusions
  $$K \subset \{v \geq \epsilon\} \subset \overline{M \backslash D} \subset M$$
  suggest to define maps
  $$I : \pi_0(\overline{M \backslash D}) \to \pi_0(M) \quad \text{and} \quad J: \pi_0(K) \to \pi_0(\overline{M \backslash D}),$$
  by the now familiar scheme.

  Here we claim $J \circ L^{-1} \circ I = \id$ and $I \circ J = L$. The argument is completely analogous to the one before.
\end{proof}
\begin{lemma}
  \label{LemmaVMax}
  We can choose $\epsilon >0$, $\lambda_-, \lambda_0, \lambda_+$, such that for every $Z \in \pi_0(\overline{M \backslash D})$,
  $$v_{\max}(Z) \geq \lambda_+ \epsilon.$$
\end{lemma}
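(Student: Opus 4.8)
The plan is to show that every component $Z$ of $\overline{M\backslash D}$ contains a point where $v$ is at least $\lambda_+\epsilon$, by understanding which points could possibly lie in $\overline{M\backslash D}$. Recall from the construction that $D = C \cup \bigcup\{Z \in \pi_0(\overline{M\backslash C}) : v_{\max}(Z) \leq \epsilon\}$, where $C$ is the union of those components of $B = \{v \leq \lambda_0\epsilon\}$ on which $v_{\min} \leq \lambda_-\epsilon$. The key observation is that the complement $\overline{M\backslash D}$ is, up to the boundary, contained in $M\backslash C$, and by the very definition of $D$ it avoids all components of $\overline{M\backslash C}$ with $v_{\max} \leq \epsilon$. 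So the components surviving in $\pi_0(\overline{M\backslash D})$ are precisely the components $Z$ of $\overline{M\backslash C}$ with $v_{\max}(Z) > \epsilon$.

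First I would make this precise: argue that each $Z \in \pi_0(\overline{M\backslash D})$ is a component of $\overline{M\backslash C}$ on which $v_{\max}(Z) > \epsilon$; this is immediate from unwinding the definition of $D$, since passing from $C$ to $D$ only absorbs the low-$v_{\max}$ components of $\overline{M\backslash C}$. Thus for such $Z$ we already have $v_{\max}(Z) > \epsilon$. The content of the lemma is the strengthening from $v_{\max}(Z) > \epsilon$ to $v_{\max}(Z) \geq \lambda_+\epsilon$ for an appropriate choice of $\lambda_+ > 1$; this cannot be obtained for free and is exactly where the freedom in choosing the parameters enters.

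The mechanism I would exploit is the quantitative gap provided by Lemma \ref{LemmaComponentsVolEnds} together with the volume comparison theorem. By that lemma, with $\epsilon_0, \lambda_0, \lambda_-$ chosen as there, the set $\overline{M\backslash D}$ corresponds bijectively to $\pi_0(K)$ and is sandwiched as $K \subset \{v \geq \epsilon\} \subset \overline{M\backslash D} \subset M$. In particular each component $Z$ meets the fixed compact piece $K$, and on $K$ the volume function $v$ is bounded below by a constant $v_0 > 0$ depending only on $(M,g)$ and $K$ (since $K$ is compact and $v$ is continuous and strictly positive where the manifold is genuinely $n$-dimensional). Because $K$ is determined by the topology alone and $\epsilon \in (0,\epsilon_0)$ can be taken arbitrarily small, the ratio $v_0/\epsilon$ can be made as large as we like by shrinking $\epsilon$. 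I would therefore fix $\lambda_+ \in (1, v_0/\epsilon)$, which forces $v_{\max}(Z) \geq \sup_{Z \cap K} v \geq v_0 > \lambda_+\epsilon$ for every surviving component.

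The main obstacle is verifying the continuity and strict positivity of $v$ on $K$ and extracting the uniform lower bound $v_0$: one must check that $x \mapsto \Vol_g(B(x,1))$ is continuous (which follows from the metric being at least $C^0$ and the balls varying continuously) and bounded away from zero on the compact set $K$. The only subtlety is that $B(x,1)$ may protrude into the ends, but since we need only a lower bound, monotonicity of volume under inclusion of balls suffices, and the positivity is guaranteed because an interior point of a Riemannian manifold has $\Vol_g(B(x,r)) \geq c\, r^n$ for small $r$. Once $v_0 > 0$ is in hand, the choice $\lambda_+ < v_0/\epsilon$ closes the argument, and I would note that this is consistent with the parameter choices already committed to in Lemma \ref{LemmaComponentsVolEnds}, so no earlier constraint is violated.
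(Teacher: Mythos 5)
Your proposal is correct and follows essentially the same route as the paper: Lemma \ref{LemmaComponentsVolEnds} guarantees that every component of $\overline{M \backslash D}$ contains a component of $K$, compactness of $K$ yields a positive constant independent of $\epsilon$, and $\epsilon$ and $\lambda_+$ are then chosen so that $\lambda_+\epsilon$ lies below it. The only (immaterial) difference is that the paper works with $\mu_0 = \min_{Z \in \pi_0(K)} v_{\max}(Z)$ rather than $\inf_K v$, which sidesteps the continuity discussion since it only uses $v>0$ pointwise and finiteness of $\pi_0(K)$.
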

\begin{proof}
  First let $\epsilon_0$ as in lemma \ref{LemmaComponentsVolEnds}.

  In particular, the choice there implies $K \subset \overline{M \backslash D}$. Now let
  $$\mu_0 = \min_{Z \in \pi_0(K)} v_{\max}(Z).$$
  This is well defined because each $Z \in \pi_0(K)$ is compact and $\pi_0(K)$ is a finite set. 
  
  For any $\epsilon < \mu_0$, any $\lambda_0$, $\lambda_-$ according to lemma \ref{LemmaComponentsVolEnds}, and any $\lambda_+$ such that $\lambda_+ \epsilon \geq \mu_0$, it follows that every component $Z$ of $\overline{M \backslash D}$ contains a component of $K$ and consequently $v_{\max}(Z) \geq \mu_0 \geq \lambda_+ \epsilon$.
\end{proof}

\begin{lemma}
  \label{LemmaGraphStructure}
  One can choose $\epsilon >0$, $\lambda_-, \lambda_0, \lambda_+$, such that
  \begin{itemize}
  \item the graph $\Gamma_{\epsilon}(M,g)$ is finite and depends only on the topology of $M$,
  \item every connected component of $M$ corresponds to a component of $\Gamma_{\epsilon}(M,g)$ and this component is a star with as many leaves as $M$ has ends,
  \item the centers of the stars in $\Gamma_{\epsilon}(M,g)$ correspond to
    $$V_{\epsilon}^{\alpha}(M,g) = \{Z \in V_{\epsilon}(M,g) : v_{\min}(Z) > 0 \}$$
    and the leaves of the stars in $\Gamma_{\epsilon}(M,g)$ correspond to
    $$V_{\epsilon}^{\omega}(M,g) = \{Z \in V_{\epsilon}(M,g) : v_{\min}(Z) = 0 \}.$$
  \end{itemize}
  
\end{lemma}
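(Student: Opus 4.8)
The plan is to combine the two preceding lemmas to pin down the vertex set exactly, and then read off the edge structure from the mutual $\delta_g$-distances of the vertices. First I would fix $\epsilon, \lambda_-, \lambda_0, \lambda_+$ so that the conclusions of both Lemma \ref{LemmaComponentsVolEnds} and Lemma \ref{LemmaVMax} hold simultaneously; choosing $\lambda_+ \epsilon$ strictly below the number $\mu_0$ of Lemma \ref{LemmaVMax} guarantees $v_{\max}(Z) > \lambda_+\epsilon$ for every $Z \in \pi_0(\overline{M \backslash D})$, so that no component of $\overline{M\backslash D}$ is discarded and
$$V_{\epsilon}(M,g) = \pi_0(D) \sqcup \pi_0(\overline{M \backslash D}).$$
By Lemma \ref{LemmaComponentsVolEnds} the first family is in bijection with $\pi_0(X\times(0,\infty))$, i.e.\ with the ends of $M$, and the second with $\pi_0(K) = \pi_0(M)$, the connected components of $M$ (gluing collars to $\partial K$ does not change $\pi_0$). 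In particular $V_{\epsilon}(M,g)$ is finite, and each component of $M$ contributes exactly one core vertex together with one vertex for each of its ends.

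Next I would classify the vertices by $v_{\min}$. Since the parameter choice forces $D \supset X\times(T,\infty)$, the set $\overline{M\backslash D}$ is contained in the compact set $K \cup (X\times[0,T])$, so each of its components is compact; as $v$ is everywhere positive on a Riemannian manifold it attains a positive minimum there, giving $v_{\min}(Z) > 0$. For $Z \in \pi_0(D)$, the lemma on ends yields $v \to 0$ along the end, so $v_{\min}(Z) = 0$. This identifies $\pi_0(\overline{M\backslash D}) = V_{\epsilon}^{\alpha}(M,g)$ and $\pi_0(D) = V_{\epsilon}^{\omega}(M,g)$, which will be the centers and leaves respectively.

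The heart of the matter is the edge relation, which I would deduce from three distance facts: (i) a core component and an end attached to it are adjacent, so their $\delta_g$-distance is $0$ — indeed in the collar coordinate the points $(x,t)$ of the end approach $X\times\{0\}\cong\partial K$, which lies in the core, as $t\to 0^+$; (ii) two distinct ends come from distinct components of the compact set $\partial K$ and hence lie at positive $\delta_g$-distance; (iii) vertices in different components of $M$ are at infinite distance. Feeding these into the betweenness condition produces a star in each component: for a center $c$ and an end $e_i$ attached to it, every competing $Z_3$ is either another end $e_j$ of the same component, with $\delta_g(c,e_i)=0 < \delta_g(e_j,e_i) = \delta_g(c,e_j)+\delta_g(e_j,e_i)$, or a vertex at infinite distance, so the edge $c$--$e_i$ is present; whereas for two ends $e_i, e_j$ the center witnesses $\delta_g(e_i,e_j) > 0 = \delta_g(e_i,c)+\delta_g(c,e_j)$, so the strict inequality fails and no edge joins them. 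Since each end of $M$ attaches to exactly one core component and distinct components of $M$ are at infinite distance, the graph splits into one star per component of $M$, with the core vertex as center and the ends of that component as leaves.

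Finally, I would observe that all the combinatorial data above — which end attaches to which core, and the numbers of components and ends — is determined by the topology of $M$ alone and is insensitive to the admissible small values of $\epsilon$ and $\lambda_-,\lambda_0,\lambda_+$, yielding the claimed finiteness and topological invariance. The step I expect to be most delicate is the joint calibration of the four parameters together with the verification that the two vertex families genuinely cover $M$ and are mutually adjacent (so that the vanishing distance in (i) really holds); once that is in place, the betweenness bookkeeping in the last paragraph is routine.
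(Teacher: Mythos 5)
Your argument is correct and follows essentially the same route as the paper: identify the vertex set via the two preceding lemmas as $\pi_0(D)\sqcup\pi_0(\overline{M\backslash D})$, classify vertices by $v_{\min}$, and derive the star structure from the three distance facts (core--end distance $0$, end--end distance positive, infinite distance across components) fed into the betweenness condition. The only cosmetic differences are that you justify $v_{\min}>0$ on core vertices by compactness rather than by the built-in lower bound $v\geq\lambda_-\epsilon$ on $\overline{M\backslash D}$, and that the zero core--end distance is most cleanly seen from the fact that each component of $D$ shares its boundary with $\overline{M\backslash D}$ (the end component of $D$ need not extend all the way down to $X\times\{0\}$).
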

\begin{proof}
  Choose $\epsilon > 0$, $\lambda_-, \lambda_0, \lambda_+$ as indicated by lemma \ref{LemmaVMax}.

  Recall that
  $$V_{\epsilon}(M,g) = \pi_0 (D) \cup \{ Z \in \pi_0(\overline{M \backslash D}) : v_{\max}(Z) > \lambda_+ \epsilon\}.$$
  According to lemma \ref{LemmaVMax} $ \{ Z \in \pi_0(\overline{M \backslash D}) : v_{\max}(Z) > \lambda_+ \epsilon\} = \pi_0(\overline{M\backslash D})$. According to lemma \ref{LemmaComponentsVolEnds} $\pi_0(D)$ is in one to one correspondence with $\pi_0(X \times (0, \infty))$ and $\pi_0(\overline{M \backslash D})$ is in one to one correspondence with $\pi_0(K)$. Moreover, if $Z \in \pi_0(D)$ we have $v_{\min}(Z) = 0$ and if $Z \in \pi_0(\overline{M \backslash D})$ we have $v_{\min}(Z) \geq \lambda_- \epsilon > 0$.

  With these observations the proof of the lemma now reduces to the following claim: there is an edge between two vertices $Z_1, Z_2 \in V_{\epsilon}(M,g)$ if and only if $Z_1$ and $Z_2$ are in the same component of $M$ and one component is in $\pi_0(\overline{M \backslash D})$ and the other is in $\pi_0(D)$.

  First note that if $Z_1$ and $Z_2$ are not in the same component then there certainly no edge between $Z_1$ and $Z_2$ as $\delta_g(Z_1, Z_2) = \infty$. So we now assume $Z_1, Z_2 \subset N$, where $N$ is one component of $M$. There are two situations of $Z_1$ and $Z_2$ two consider:
  \begin{itemize}
  \item $Z_1 \in \pi_0(\overline{M \backslash D}), Z_2 \in \pi_0(D)$ (or vice versa),
  \item $Z_1, Z_2 \in \pi_0(D)$.
  \end{itemize}
  There is precisely one element $\pi_0(\overline{M \backslash D})$, which is contained in $N$, and it intersects every element of $\pi_0(D)$, which is contained in $N$. Thus in the first situation we have $\delta_g(Z_1, Z_2) = 0$. In the second situation we note that $Z_1$ and $Z_2$ are disjoint, closed subsets of $X \times (0, \infty)$ and thus $\delta_g(Z_1, Z_2) > 0$. Together these observations imply that there is an edge between $Z_1 \in \pi_0 (\overline{M \backslash D})$ and $Z_2 \in \pi_0(D)$ and no edge between $Z_1, Z_2 \in \pi_0(D)$.

  This shows the second claim of the lemma and the last follows from an observation we already made above, namely the identities
  $$\{Z \in V_{\epsilon}(M,g) : v_{\min}(Z) = 0 \} = \pi_0(D), \{Z \in V_{\epsilon}(M,g) : v_{\min}(Z) > 0 \} = \pi_0(\overline{M \backslash D}).$$
\end{proof}

\begin{lemma}
  Suppose $(M_k, g_k)$ satisfies $\Vol_{g_k}(M_k) \leq V$. Suppose $(M_k, g_k)$ converges to $(M,g)$ in the $C^{\alpha}$ volume exhausted Cheeger--Gromov topology.

  For sufficiently small $\epsilon > 0$, there exist $\lambda_-, \lambda_0, \lambda_+$ depending on $(M,g)$, such that
  \begin{itemize}
  \item the conclusions of lemma \ref{LemmaGraphStructure} remain true,
  \item for sufficiently large $k \in \N$, there exists a graph morphism
    $$\varphi_k : V_{\epsilon}(M,g) \to V_{\epsilon}(M_k,g_k),$$
    i.e.\@ if there is an edge between $Z_1, Z_2 \in V_{\epsilon}(M,g)$, then there is an edge between $\varphi_k(Z_1)$ and $\varphi_k(Z_2)$,
  \item $\varphi_k$ is surjective,
  \item $\varphi_k$ is injective on $V_{\epsilon}^{\alpha}(M,g)$.
  \end{itemize}
\end{lemma}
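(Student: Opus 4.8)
The plan is to use the volume-exhausted embeddings to transport the collapsed/non-collapsed decomposition of $M$ into $M_k$ and to read the graph morphism off of it. Throughout write $v(x)=\Vol_g(B(x,1))$ and $v_k(x)=\Vol_{g_k}(B(x,1))$, and recall the elementary bound $\nu\le v/\omega_n$, valid on any Riemannian manifold; in the lower Ricci bounded setting in which this convergence arises (cf. Corollary \ref{CorCptIntCRicBdd} and the remark following the definition of $\nu$) Bishop--Gromov also yields $\nu\ge c_n v$, so that the $\nu$- and $v$-non-collapsed regions agree up to a fixed constant. First I would fix the parameters: choose $\epsilon,\lambda_-,\lambda_0,\lambda_+$ as in Lemma \ref{LemmaVMax}, so that the conclusions of Lemma \ref{LemmaGraphStructure} hold for $(M,g)$, taking in addition $\lambda_-,\lambda_0,\lambda_+$ generic enough that the component decomposition underlying $B,C,D$ is separated with a definite margin and hence stable under a $C^0$-small perturbation of $v$. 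Since $M$ is fixed and smooth, $\nu>0$ everywhere and, by $\nu\le v/\omega_n$ together with Lemma \ref{LemmaVolEnds}, $\nu\to0$ along the ends; thus $\{\nu\ge\epsilon'\}$ is compact and exhausts $M$ as $\epsilon'\downarrow0$, while the core $K_+:=\overline{M\setminus D}$ is compact with $v_{\min}\ge\lambda_-\epsilon$.

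Next I would pick the exhaustion threshold in Definition \ref{DefVolExConv} small: choose $\epsilon'>0$ so small that the locus $\{\nu=\epsilon'\}$ lies at $g$-distance greater than $2$ from $K_+$, and also $\epsilon'\le c_n\lambda_-\epsilon$. This is possible because, by completeness and finiteness of the volume, $d_g(p,(x,t))\to\infty$ along every end, so $\{\nu=\epsilon'\}$ recedes into the ends as $\epsilon'\downarrow0$. Volume-exhausted convergence then supplies, for large $k$, embeddings $f_k:\Omega\to M_k$ with $f_k(\Omega)\supset\{\nu_k>\epsilon'\}$ and $f_k^*g_k\to g$ in $C^\alpha$ on $\Omega$, where $\Omega\supset\{\nu>\epsilon'\}\supset K_+$.

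The heart of the argument is to match the two decompositions. Because the $1$-neighbourhood of $K_+$ lies at distance $>1$ from $\{\nu=\epsilon'\}$ and $f_k$ is a $C^\alpha$-near-isometry there, for large $k$ every unit ball centred at a point of $f_k(K_+)$ stays inside $f_k(\Omega)$; hence $v_k\circ f_k\to v$ uniformly on a fixed neighbourhood of $K_+$. On the other hand the uncovered region satisfies $M_k\setminus f_k(\Omega)\subset\{\nu_k\le\epsilon'\}\subset\{v_k\le\lambda_-\epsilon\}\subset D_k$ by the Ricci comparison, so the entire non-collapsed skeleton $\overline{M_k\setminus D_k}$ sits in the controlled region $f_k(\Omega)$. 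Combining these facts with the stability of the thresholds, $B_k,C_k,D_k$ agree on the controlled region with the $f_k$-images of $B,C,D$, so $f_k$ induces a bijection $\pi_0(\overline{M_k\setminus D_k})\cong\pi_0(\overline{M\setminus D})$ and identifies each collapsed component of $M$ adjacent to a core with the collapsed component of $M_k$ into which its mouth is carried.

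With this in hand I would set $\varphi_k(Z)$, for a centre $Z\in\pi_0(\overline{M\setminus D})$, to be the $M_k$-centre containing $f_k(Z)$, and $\varphi_k(Z_{\mathrm{end}})$, for a leaf $Z_{\mathrm{end}}\in\pi_0(D)$, to be the component of $D_k$ meeting $f_k(\partial Z_{\mathrm{end}}\cap K_+)$. Edge preservation is then immediate: an edge of $\Gamma_\epsilon(M,g)$ joins a centre to an adjacent leaf at $\delta_g$-distance $0$, and since $f_k$ preserves incidence we get $\delta_{g_k}(\varphi_k(Z),\varphi_k(Z_{\mathrm{end}}))=0$, which forces the strict triangle inequality defining an edge. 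Injectivity on $V_\epsilon^{\alpha}$ follows because distinct centres lie in distinct components of $M$ and are separated in $M_k$ by $D_k$, the uncovered part being collapsed and hence unable to reconnect two elements of $\overline{M_k\setminus D_k}$. For surjectivity, every $M_k$-centre lies in $f_k(\Omega)$ and so pulls back to a centre of $M$, while every collapsed component of $M_k$ adjacent to a centre arises, via the recession of $\{\nu=\epsilon'\}$, from an end of $M$. I expect the main obstacle to be precisely this last point: ruling out collapsed components of $M_k$ adjacent to no centre — pieces of $M_k$ that collapse entirely and leave no trace in the limit. These would be isolated vertices of $\Gamma_\epsilon(M_k,g_k)$ outside the image of $\varphi_k$, and excluding them is where the volume bound $\Vol_{g_k}(M_k)\le V$ and the finite-ends hypothesis on $M$ must be leveraged to guarantee that all of the mass of $M_k$ is organised by the ends of the limit.
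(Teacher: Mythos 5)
Your high-level picture matches the paper's: use the exhaustion embeddings $f_k$ to transport the decomposition of $M$ into $M_k$, define $\varphi_k(Z)$ as the component of $D_k$ or $\overline{M_k\setminus D_k}$ containing a suitable image, and use the separation of the ends for edge preservation and injectivity on the centers. But the step you lean on hardest is a genuine gap: the claim that, for ``generic'' $\lambda_-,\lambda_0,\lambda_+$, the sets $B_k,C_k,D_k$ agree on the controlled region with the $f_k$-images of $B,C,D$, so that $f_k$ induces a bijection $\pi_0(\overline{M_k\setminus D_k})\cong\pi_0(\overline{M\setminus D})$. Uniform closeness $|v_k\circ f_k-v|<\delta$ only sandwiches $\{v_k\circ f_k\le\lambda\epsilon\}$ between $\{v\le\lambda\epsilon-\delta\}$ and $\{v\le\lambda\epsilon+\delta\}$, and the connected components of these sets --- and the subsequent $v_{\min}/v_{\max}$ selections defining $C$ and $D$ --- can differ arbitrarily between the two bounds: components can split, merge, appear or disappear. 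No genericity of a single threshold repairs this; the unruliness of $v$ is exactly what the iterated coarsening is designed around, and the lemma does not in fact assert injectivity on the leaves. The paper never identifies $D_k$ with $f_k(D)$. Instead it fixes a ladder of thresholds $\lambda_1<\lambda_-<\lambda_0<1<\lambda_+$ and radii $T_1<\dots<T_5$ with a factor-of-two margin in the sandwich inclusions (\ref{Inclusion1})--(\ref{Inclusion3}), so that after the perturbation one can certify only that certain connected model sets land on the correct side: $f_k\bigl((K\cup X\times(0,T_1))\cap N\bigr)\subset\overline{M_k\setminus D_k}$ and $f_k\bigl(L\times(T_3,T_5)\bigr)\subset D_k$. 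These certified containments are what make $\varphi_k$ well defined, and everything else is derived from them. (Your choice of $f_k(\partial Z_{\mathrm{end}}\cap K_+)$ for the leaves sits exactly at the threshold level $v\approx\lambda_0\epsilon$ and so cannot be certified to lie in $D_k$ at all; this is why the paper goes deep into the end, to $L\times(T_3,T_5)$, where $v\le\lambda_0\epsilon/2$.)

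Second, you explicitly leave surjectivity open, proposing to rule out ``orphan'' collapsed components of $M_k$ via the volume bound and the finite-ends hypothesis; that is not how it is done and no argument is supplied. The paper constructs an explicit right inverse $\psi_k$: every vertex $W\in V_\epsilon(M_k,g_k)$ contains a point $x$ with $v_k(x)$ in a controlled window (near $\lambda_1\epsilon$ for components of $D_k$, above $\lambda_+\epsilon$ for the others), which forces $x\in\Omega_k$, and $f_k^{-1}(x)$ then selects a vertex of $\Gamma_\epsilon(M,g)$ with $\varphi_k(\psi_k(W))=W$. Injectivity on $V^{\alpha}_\epsilon(M,g)$ is likewise proved by checking $\psi_k\circ\varphi_k=\id$ there, using the point where $v_k$ attains its maximum, rather than by your separation heuristic (which would additionally require knowing that the uncovered region lies in $D_k$ and that taking closures does not reconnect components). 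Finally, your edge-preservation argument only treats the adjacent center--leaf pair at distance zero; the actual content of the edge condition is the strict triangle inequality against every third vertex $Z_3$, and verifying it for $Z_3$ in a different component of $M$ requires the quantitative separation (\ref{ineq:distanceassumption}) built into the choice of the $T_i$, which has no counterpart in your setup.
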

\begin{proof}
  Choose $\epsilon, \lambda_+, \lambda_0, \lambda_-$ and $T_5 > T_4 > T_3 > T_2 > T_1 > 0$, such that
  \begin{equation}
    X \times (T_4, \infty) \subset \{v \leq \lambda_- \epsilon/2 \} \subset \{v \leq 2 \lambda_- \epsilon \} \subset X \times (T_3, \infty), \label{Inclusion1}
  \end{equation}
  \begin{equation}
    X \times (T_3, \infty) \subset \{v \leq \lambda_0 \epsilon/2 \} \subset \{v \leq 2 \lambda_0 \epsilon \} \subset X \times (T_2, \infty), \label{Inclusion2}
  \end{equation}
  \begin{equation}
    X \times (T_2, \infty) \subset \{v \leq \lambda_+ \epsilon/2 \} \subset \{v \leq 2 \lambda_+ \epsilon \} \subset X \times (T_1, \infty),\label{Inclusion3}
  \end{equation}
  lemma \ref{LemmaGraphStructure} is satisfied and for any component $L$ of $X$. We also let $\lambda_1 > 0$ and $T_4$ be such that
  \begin{equation}
    X \times (T_5, \infty) \subset \{v \leq \lambda_1 \epsilon/2 \} \subset \{v \leq 2 \lambda_1 \epsilon \} \subset X \times (T_4, \infty)\label{Inclusion3}
  \end{equation}
  and
  \begin{equation}
    \label{ineq:distanceassumption}
    \delta_g(L_i \times \{T_3\}, L_i \times \{T_5\}) > 4 \delta_g(L_j \times \{T_1\}, L_j \times \{T_3\}) 
  \end{equation}
  for any components $L_i, L_j$ of $X$.

  Throughout this proof, $D_k$ denotes the subset of $M_k$ obtained in the same way $D$ is obtained from $M$. We also denote $\kappa = \min_{(x,t) \in X \times (0, T_5)} v((x,t))$.
  
  By the definition of volume exhausted Cheeger--Gromov convergence, there exist for any $\eta, \theta > 0$
  \begin{itemize}
  \item $N \in \N$,
  \item an open set $\Omega \supset \{x \in M : \nu(x) > \eta \}$,
  \item for every $k \geq N$ a diffeomorphism $f_k : \Omega \to \Omega_k \subset M_k$, such that $\Omega_k$ is open and $\Omega_k \supset \{x \in M_k : \nu(x) > \eta \}$
  \end{itemize}
  and the inequality $\|f_k^* g_k - g \|_{C^{\alpha}(\Omega)} < \theta$ holds for every $k \geq N$.

  Since $\nu(x) \leq \omega_n^{-1} v(x)$, it follows that $\{\nu > \eta \} \supset \{v > \omega_n \eta\}$. We now choose $\eta = \omega_n^{-1}\kappa/8$, so that
  $$\{\nu > \eta \} \supset \{v > \kappa / 8\}.$$
  Let $v_k : \Omega_k \to \R_+$ be defined by $x \mapsto \Vol_{g_k}(B(x,1))$. By assumption on $f_k$, if $B(x,1) \subset \Omega$ and $B(f_k(x),1) \subset \Omega_k$, then
  \begin{equation}
    \label{IneqVolSeqVsLim}
    |v(x) - v_k(f_k(x))| < 2^{-100} \kappa,
  \end{equation}
  if we choose $\theta$ sufficiently small. Moreover, we can choose $\theta$, such that
  $$f_k : (\Omega, d_g) \to (\Omega_k , d_{g_k})$$
  is an $2^{-100}$-almost isometry, i.e.\@
  \begin{equation}
    \label{ineq:almostisometry}
    |d_{g_k}(f_k(x_1), f_k(x_2)) - d_g(x_1, x_2)| \leq 2^{-100}
  \end{equation}
  for any $x_1, x_2 \in \Omega$.
  
  We note that the definition of $\eta$ implies
  $$K \cup X \times (0, T_5) \subset \Omega.$$

  {\bf Definition of $\bm{\varphi_k}$:}

  First, suppose $Z \in V_{\epsilon}^{\alpha}(M,g)$. Let $N \subset M$ be the component of $M$ containing $Z$. Then
  $$Z \supset \{v \geq \lambda_+ \epsilon\} \cap N.$$
  This implies
  $$Z \supset \left(K \cup X \times (0, T_1) \right) \cap N$$
  by inclusion (\ref{Inclusion3}). Now $\left(K \cup X \times (0, T_1) \right) \cap N$ is connected and thus $f_k(\left(K \cup X \times (0, T_1) \right) \cap N)$ is connected. Moreover, because of inequality (\ref{IneqVolSeqVsLim}) and again inclusion (\ref{Inclusion3}), it follows that
  $$f_k (K \cup X \times (0, T_1)) \subset \{v \geq (1 - 2^{-100}) 2 \lambda_+ \epsilon\} \subset \Omega_k.$$
  In particular $f_k(\left(K \cup X \times (0, T_1) \right) \cap N)$ is contained in $M_k \backslash D_k$.
  Because it is connected, it is contained in a unique component $W$ of $\overline{ M_k \backslash D_k }$. We define $\varphi_k(Z) = W$.

  Now we suppose $Z \in V_{\epsilon}^{\omega}(M,g)$. Then there exists some component $L \times (0, \infty)$ of $X \times (0, \infty)$, such that $Z \subset L \times (0, \infty)$. On the other hand
  $$Z \supset L \times (0, \infty) \cap \{v \leq \lambda_0 \epsilon\}.$$
  We also have $X \times (0, T_5) \subset \Omega$ by the choice of $\eta$. By the inequality (\ref{IneqVolSeqVsLim}), $f_k(L \times (T_3, T_5)) \subset D_k$. Since $L \times (T_3, T_5)$ is connected, there exists a unique component $W$ of $D_k$ containing $f_k(L \times (T_3, T_5))$. We define $\varphi_k(Z) = W$.

  {\bf $\bm{\varphi_k}$ is surjective:}\\
  To see this we define a right inverse $\psi_k : V_{\epsilon}(M_k, g_k) \to V_{\epsilon}(M,g)$.
  
  Let $W \in V_{\epsilon}(M_k, g_k)$. By definition
  $$V_{\epsilon}(M_k, g_k) = \pi_0(D_k) \cup \{ Z \in \pi_0(\overline{M \backslash D_k}) : v_{\max}(Z) > \lambda_+ \epsilon\}.$$
  There are two scenarios for $W$:
  \begin{itemize}
  \item $v_{\min}(W) \leq \lambda_- \epsilon, \quad v_{\max}(W) \leq \epsilon$,
  \item $v_{\min}(W) \geq \lambda_- \epsilon, \quad v_{\max}(W) > \lambda_+ \epsilon$.
  \end{itemize}
  In the first case we pick a point $x \in \Omega_k$ such that $v_k(x) = v_{\min}(W \cap \{v \geq \lambda_1 \epsilon\})$. In the second case $W \subset \Omega_k$ and we choose a point $x \in W$ with $v_k(x) = v_{\max}(W)$. (Note that $W$ is closed and bounded, and so the maximum is attained.)

  In both cases we define $\psi_k(W)$ to be the unique connected component in $V_{\epsilon}(M,g)$, which contains $f_k^{-1}(x)$. Note that the components in $V_{\epsilon}(M,g)$ cover $M$, so that there is such a component. Moreover, in the first case we have $v(f_k^{-1}(x)) \leq (1 + 2^{-100}) \lambda_- \epsilon < \lambda_0 \epsilon$. Thus $f_k^{-1}(x)$ is in $D$ but not in $\overline{M \backslash D}$. Similarly, in the second case $v(f_k^{-1}(x)) > (1 - 2^{-100}) \lambda_+ \epsilon$ and so $f_k^{-1}(x) \notin D$. Thus, there is precisely one component $Z$ in $V_{\epsilon}(M,g)$, such that $f_k^{-1}(x) \in Z$.

  To see that $\psi_k$ is a right inverse, let $W \in V_{\epsilon}(M_k, g_k)$.

  We start with the case $v_{\min}(W) \leq \lambda_- \epsilon$. We already saw that in this case $f_k^{-1}(x) \in D$. So there is a unique component $L \times (0, \infty) \subset X \times (0, \infty)$, such that $f_k^{-1}(x) \in L \times (0, \infty)$. By definition $\varphi_k(Z)$ is the component of $D_k$ such that $D_k \supset f_k( L \times (T_3, T_5))$. Since $v(f_k^{-1}(x)) \in ((1-2^{-100})\lambda_1 \epsilon, (1+2^{-100}) \lambda_- \epsilon)$, it follows that $f_k^{-1}(x) \in L \times (T_3, T_5)$. Hence $x = f_k(f_k^{-1}(x)) \in f_k(L \times (T_3, T_5) \subset \varphi_k(\psi_k(W))$. This implies $\varphi_k(\psi_k(W)) \cap W \neq \varnothing$ and so $\varphi_k(\psi_k(W)) = W$.

  Now we move on to the case $v_{\max}(W) \geq \lambda_+ \epsilon$. We already saw $v(f_k^{-1}(x)) > (1 - 2^{-100}) \lambda_+ \epsilon$, which implies $f_k^{-1}(x) \in K \cup X \times (0, T_2)$. Let $N$ be the component of $M$, which contains $f_k^{-1}(x)$. Then by definition $\psi_k(W)$ is the component of $D$, which contains $f_k^{-1}(x)$. In particular, $\psi_k(W) \supset \left(K \cup X \times (0, T_1) \right) \cap N$. From the definition of $\varphi_k(\psi_k(W))$ it follows that $\varphi_k(\psi_k(W)) \supset f_k\left(\left(K \cup X \times (0, T_1) \right) \cap N \right)$. And thus $x \in f_k(f_k^{-1}(x)) \in \varphi_k(\psi_k(W))$. Again we conclude $\varphi_k(\psi_k(W)) = W$. This finishes the proof that
  $$\varphi_k \circ \psi_k = \id : V_{\epsilon}(M_k, g_k) \to V_{\epsilon}(M_k, g_k).$$

  {\bf $\bm{\varphi_k|_{V^{\alpha}_{\epsilon}(M,g)}}$ is injective:}\\
  We show that $\psi_k$ is a left inverse on $V^{\alpha}_{\epsilon}(M,g)$. Suppose $Z \in V^{\alpha}_{\epsilon}(M,g)$.

  Then by definition $\varphi_k(Z) \supset f_k\left( K \cap Z \right)$. We know moreover that $v_{\max}(Z \cap K) \geq \lambda_+ \epsilon$ and thus $v_{\max}(f_k(Z \cap K)) \geq (1 - 2^{-100}) \lambda_+ \epsilon$. For the definition of $\psi_k(\varphi_k(Z))$, we chose $x \in \varphi_k(Z)$ with $v_k(x) = v_{\max}(\varphi_k(Z))$. Thus $v_k(x) \geq (1 - 2^{-100}) \lambda_+ \epsilon$ and thus $v(f_k^{-1}(x)) \geq (1 - 2^{-100})^2 \lambda_+ \epsilon > \epsilon$. We conclude that $\psi_k(\varphi_k(Z)) \in V_{\epsilon}^{\alpha}(M,g)$. Moreover, $x \in f_k(K)$. Since $K \cap Z$ is connected, it follows in fact that $x \in f_k(K \cap Z)$. Using that $f_k^{-1}(x) \in \psi_k(\varphi_k(Z))$, it follows that
  $$Z \cap \psi_k(\varphi_k(Z)) \supset \{f_k^{-1}(x)\} \neq \varnothing.$$
  Since $Z, \psi_k(\varphi_k(Z))$ are components, it follows that $Z = \psi_k(\varphi_k(Z))$ as claimed.

  {\bf $\bm{\varphi_k}$ is a graph morphism:}\\
  Any edge in $\Gamma_{\epsilon}(M,g)$ runs between an element of $V_{\epsilon}^{\alpha}(M,g)$ and an element of $V_{\epsilon}^{\omega}(M,g)$, such that both are in the same component of $M$.

  Thus suppose that there is an edge between $Z_1 \in V_{\epsilon}^{\alpha}(M,g)$ and $Z_2 \in V_{\epsilon}^{\omega}(M,g)$. Let $N$ be the component of $M$ containing $Z_1$ and $Z_2$.

  We need to prove that $\varphi_k(Z_1)$ and $\varphi_k(Z_2)$ are connected by an edge. By definition, this means we have to show that whenever $W \in V_{\epsilon}(M_k, g_k)$ and $W \neq \varphi_k(Z_1), \varphi_k(Z_2)$, then
  $$\delta_{g_k}(\varphi_k(Z_1), \varphi_k(Z_2)) < \delta_{g_k}(\varphi_k(Z_1), W) + \delta_{g_k}(W, \varphi_k(Z_2)).$$
  We have seen that $\varphi_k$ is surjective. Thus we can always assume $W = \varphi_k(Z_3)$ for some $Z_3 \in V_{\epsilon}(M_k,g_k)$.

  Let $N$ be the component of $M$ which contains $Z_1$ and $Z_2$.
  
  We distinguish two cases:
  \begin{itemize}
  \item $Z_3$ is in the same component as $Z_1$ and $Z_2$,
  \item $Z_3$ is in a different component than $Z_2$ and $Z_3$.
  \end{itemize}
  In the first case $Z_3 \subset N$. Since $Z_1$ is the unique element in $V^{\alpha}_{\epsilon}(M,g)$ contained in $N$ it follows that $Z_3 \in V^{\omega}_{\epsilon}(M,g)$. Let $L_1, L_3$ be the components of $X$, such that $Z_2 \subset L_2 \times (T_2, \infty)$ and $Z_3 \subset L_3 \times (T_2, \infty)$. Then
  $$\delta_{g_k}(\varphi_k(Z_1), \varphi_k(Z_2)) < \delta_{g_k}(\varphi_k(Z_1), \varphi_k(Z_3)).$$
  Indeed since any curve from $Z_1$ to $Z_3$ has to pass through $Z_2$ and since $X \times \{T_1\} \subset Z_1$, it follows that
  $$\delta_{g_k} (\varphi_k(Z_2), \varphi_k(Z_3)) \geq \delta_{g_k}(\varphi_k(Z_2), \varphi_k(Z_1)) +  \delta_{g_k}(f_k(L_2 \times \{T_1\}), f_k(L_3 \times \{ T_1\})) + \delta_{g_k}(\varphi_k(Z_1), \varphi_k(Z_3)).$$
  Since $f_k(L_2 \times \{T_1\})$ and $f_k(L_3 \times \{ T_1\})$ are compact and disjoint, it follows that
  $$\delta_{g_k}(f_k(L_2 \times \{T_1\}), f_k(L_3 \times \{ T_1\})) > 0$$
  and thus
  $$\delta_{g_k} (\varphi_k(Z_2), \varphi_k(Z_3)) > \delta_{g_k} (\varphi_k(Z_1), \varphi_k(Z_2)).$$

  In the second case, suppose that $Z_3$ is contained in the component $\tilde{N}$ of $M$. If $Z_3 \in V_{\epsilon}^{\omega}(M,g)$, then we can argue as in the previous case. Thus we may assume that $Z_3$ is the unique element of $V_{\epsilon}^{\alpha}(M,g)$ contained in $\tilde{N}$. Moreover, we let $L$ be the component of $X$, such that $Z_2 \subset L \times (0, \infty)$.

  By definition $K \cup X \times (0,T_5) \subset \Omega$. Any curve between $f_k(K \cap N)$ and $f_k(K \cap \tilde{N})$ in $M_k$ can not wholly lie in $f_k(K \cup X \times (0, T_5))$, i.e.\@ such a curve must pass through $X \times \{T_5\}$.

  Note that
  $$\varphi_k(Z_1) \subset f_k\left( [K \cup X \times (0, T_3)] \cap N \right),\quad \varphi_k(Z_3) \subset f_k\left( [K \cup X \times (0, T_3)] \cap \widetilde{N} \right).$$

  With this in mind we compute
  \begin{align*}
    \delta_{g_k}( \varphi_k(Z_1), \varphi_k(Z_3)) & \geq  \delta_{g_k} \left(f_k\left( [K \cup X \times (0, T_3)] \cap N \right), f_k\left( [K \cup X \times (0, T_3)] \cap \tilde{N} \right)\right) \\
    & \geq \min_{\hat{L} \in \pi_0(X)} \delta_{g_k} \left( f_k(X \times \{T_3\}), f_k(X \times \{T_5\}) \right) \\
    & \stackrel{(\ref{ineq:almostisometry})}{>} \frac{1}{2} \min_{\hat{L} \in \pi_0(X)} \delta_g \left( X \times \{T_3\}, X \times \{T_5\} \right) \\
    & \stackrel{(\ref{ineq:distanceassumption})}{>} 2 \delta_g(L \times \{T_1\}, L \times \{T_3\}) \\
    & \stackrel{(\ref{ineq:almostisometry})}{>} \delta_{g_k}(f_k(L \times \{T_1\}), f_k(L \times \{T_3\})) \\
    & \geq \delta_{g_k}(\varphi_k(Z_1), \varphi_k(Z_2)),
  \end{align*}
  where the last inequality uses that $f_k(L \times \{T_1\}) \subset \varphi_k(Z_1)$ and $f_k(L \times \{T_4\}) \subset Z_2$. Reading only the first and the last terms in this chain of inequalities, we have
  $$\delta_{g_k}(\varphi_k(Z_1), \varphi_k(Z_2)) < \delta_{g_k}(\varphi_k(Z_1), \varphi_k(Z_3)),$$
  which is what we wanted to show.

\end{proof}

\section{The structure of limit spaces of Riemannian surfaces}
\label{SecLimitSurfaces}

This section is concerned with the proof of theorem \ref{ThmCptSurfIntC}. This is achieved through a combination of theorem \ref{ThmRegSubconv} and results of Shioya.

\begin{ethm}[Lemma 3.2,3.3 in \cite{s}]
  \label{ThmBddCurvGH}
  For every $\Lambda > 0$ the family of complete Riemannian surfaces $(M,g)$ with
  $$\int_M |K_g| \vol_g \leq \Lambda$$
  is precompact with respect to the pointed Gromov--Hausdorff topology.
\end{ethm}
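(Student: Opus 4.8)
The plan is to verify Gromov's precompactness criterion: it suffices to produce, for each $R>0$ and $\epsilon>0$, a bound $N(R,\epsilon,\Lambda)$ on the maximal number of $\epsilon$-separated points contained in any metric ball $B(p_0,R)$, uniform over the whole family. Passing to a maximal $\epsilon$-separated set, such a packing bound simultaneously controls the covering numbers, which is precisely uniform total boundedness of balls and hence pointed Gromov--Hausdorff precompactness.

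The central analytic input is a Gauss--Bonnet/Jacobi estimate for the length $L(r)=\mathcal{H}^1(\partial B(x,r))$ of geodesic circles. Writing the metric in geodesic polar coordinates $dr^2+f(r,\theta)^2\,d\theta^2$ and integrating the Jacobi equation $\partial_r^2 f+K_g f=0$ with $f(0,\theta)=0,\ \partial_r f(0,\theta)=1$ yields the first-variation identity $L'(r)=2\pi-\int_{B(x,r)}K_g\,\vol_g$, valid as long as $B(x,r)$ is a topological disk inside the cut locus. Since $\int_{B(x,r)}K_g\,\vol_g\le\int_M K_g^+\,\vol_g\le\Lambda$, this gives the linear length bound $L(r)\le(2\pi+\Lambda)r$ and, after integrating in $r$ by the coarea formula, the quadratic area bound
$$\Vol_g(B(x,r))=\int_0^r L(s)\,ds\le\tfrac12(2\pi+\Lambda)r^2.$$
I would first record that the same total-curvature bound limits the local topology (the genus and number of boundary circles of $B(p_0,R)$ seen through Gauss--Bonnet), so that these disk-type formulae, or their corrected versions across cut-locus and topology changes, apply on a controlled decomposition of $B(p_0,R)$.

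The packing bound then splits into a \emph{fat} and a \emph{thin} regime according to the curvature budget. From the same identity $L'(r)=2\pi-\int_{B(x,r)}K_g\,\vol_g\ge 2\pi-\int_{B(x,r)}K_g^+\,\vol_g$, whenever the enclosed positive curvature stays below $2\pi-\delta$ one obtains a matching lower bound $L(r)\ge\delta r$, hence $\Vol_g(B(x,r))\ge\tfrac{\delta}{2}r^2$; combined with the upper bound this yields a two-sided volume estimate and the packing bound $N\le C(\Lambda)\,\delta^{-1}R^2\epsilon^{-2}$ by the standard disjoint-balls argument. The only way the lower bound can fail is where a definite amount of positive curvature, close to $2\pi$, concentrates in small balls; since $\int_M K_g^+\,\vol_g\le\Lambda$, there are at most $O(\Lambda)$ such concentrations, and each caps off a thin, nearly one-dimensional spike whose extent inside $B(p_0,R)$ is at most $\sim R$ and along which the linear bound $L(r)\le(2\pi+\Lambda)r$ forces the number of $\epsilon$-separated points to be at most $\sim R/\epsilon$.

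The main obstacle is making this thin/fat dichotomy rigorous: one must localize the curvature budget to show that the concentration points are finite in number, that away from them the effective enclosed positive curvature is bounded below $2\pi$, and that near them the surface is metrically close to a segment. This is the technical heart of the result and is exactly the content of Shioya's Lemmas 3.2 and 3.3 in \cite{s}, which rest on Alexandrov's structure theory of surfaces of bounded integral curvature. I would therefore either invoke that structure theory directly or, in the smooth setting, argue by induction on the total curvature $\Lambda$, peeling off one concentration and its spike at a time and reducing to a region of strictly smaller curvature budget to which the two-sided volume estimate applies.
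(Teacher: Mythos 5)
This statement is not proved in the paper at all: it is imported verbatim from Shioya (Lemmas 3.2 and 3.3 of \cite{s}), with only the remark that Shioya proves the diameter-bounded case and observes that the same technique gives the pointed version. So the only meaningful comparison is with Shioya's argument, and against that your sketch has a genuine gap at exactly the step you identify as the ``technical heart.'' The identity $L'(r)=2\pi-\int_{B(x,r)}K_g\,\vol_g$ and the resulting lower bound $L'(r)\geq 2\pi-\int_{B(x,r)}K_g^+\,\vol_g$ are only valid while $B(x,r)$ is a topological disk inside the cut locus. Beyond the cut locus, or when $B(x,r)$ acquires nontrivial topology, the correct statement is the one-sided inequality $L'(r)\leq 2\pi\chi(B(x,r))-\int_{B(x,r)}K_g\,\vol_g$ (Fiala--Hartman--Shiohama--Tanaka); this salvages your \emph{upper} bounds $L(r)\leq(2\pi+\Lambda)r$ and $\Vol_g(B(x,r))\leq\tfrac12(2\pi+\Lambda)r^2$, but the lower bound can fail with no positive curvature present whatsoever. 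A flat cylinder of small circumference, a hyperbolic cusp, or a thin neck joining two fat regions are all collapsed (so the disjoint-balls volume argument gives nothing there), yet carry zero or negative curvature. Consequently your dichotomy ``the lower bound fails only where positive curvature close to $2\pi$ concentrates,'' and the ensuing count of at most $O(\Lambda)$ thin regions, is not correct as stated: only capped-off spikes cost $\approx 2\pi$ of positive curvature; necks and cusps do not.

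What actually closes the argument --- and what Shioya's Lemma 3.2 does --- is to control the \emph{number} of nearly one-dimensional pieces meeting $B(p_0,R)$ by combining three ingredients: the quadratic area bound (which bounds the number of non-collapsed pieces, since each contributes definite area), a local Gauss--Bonnet bound on the topology of $B(p_0,R)$ coming from $\int_M|K_g|\,\vol_g\leq\Lambda$ (which bounds the genus and hence the number of necks beyond those of a tree), and the observation that each thin piece contributes only $\sim R/\epsilon$ separated points. Your fallback of invoking Alexandrov's structure theory directly is legitimate and is in fact what Shioya does; but the proposed smooth induction ``peeling off one concentration at a time'' would not terminate the problem, because it removes only the positive-curvature spikes and leaves the curvature-free collapsed regions untouched. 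If you want to keep the write-up honest, either cite Shioya for the packing estimate outright or replace the dichotomy by the three-ingredient count above.
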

Shioya proves this for the class of complete Riemannian surfaces $(M,g)$ with
$$\int_M |K_g| \vol_g \leq \Lambda \quad \text{and} \quad \diam(M,g) \leq D$$
in lemma 3.2 of \cite{s}, but notes in lemma 3.3 that the same technique applies to prove the theorem above for pointed Riemannian surfaces.

Shioya goes on to study the topology of the limit space and shows they have the structure of a pearl space.
\begin{defn}
  A {\em string of pearls} is a topological space $P$ obtained by the following procedure. Let $I$ be a countable index set and suppose $x_i \in (0,1)$ and $\delta_i > 0$, such that the sets $(x_i - \delta_i, x + \delta_i)$ are pairwise disjoint. Let $L = (0,1) \backslash \cup_i (x_i - \delta_i, x + \delta_i)$.
  Denote by $S(p, r) \subset \R^3$ the sphere with center $p \in \R^3$ and radius $r > 0$.

  Then
  $$P =  L \times \{(0,0)\} \cup \bigcup_{i \in I} S((x_i, 0,0), \delta_i).$$
  
  A {\em pearl space} is a topological space $X$ for which every point $x \in X$ admits a neighborhood $U$, such that $U \backslash \{x \}$ is homeomorphic to a disjoint union of a finite number of strings of pearls. The {\em index} of the point $x$ is the number of strings in $U \backslash \{ x\}$.
\end{defn}

\begin{ethm}[Thm. 1.2, 1.4 in \cite{s}]
  \label{ThmTopologicalStructure}
  Let $K, D > 0$.
  
  If $(M_k, g_k)$ is a sequence of complete Riemannian surfaces with $\diam(M_k,g_k) \leq D$ and
  $$\int_{M_k} |K_{g_k}| \vol_{g_k} \leq K$$
  converging in the Gromov--Hausdorff topology to a limit space $(X,d)$, then $(X,d)$ is a pearl space.

  If additionally
  $$\int_{M_k} |K_{g_k}|^p \vol_{g_k} \leq \Lambda$$
  for some $p>1$ and $\Lambda > 0$, then the index of every point is at most $2$.
\end{ethm}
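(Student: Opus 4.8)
The plan is to situate the sequence $(M_k,g_k)$ within Alexandrov and Reshetnyak's theory of surfaces of bounded integral curvature, in which Shioya's results live, and to read off the local structure of the Gromov--Hausdorff limit from the behaviour of the curvature measures. First I would record that each smooth surface carries a curvature measure $\omega_k = K_{g_k}\,\vol_{g_k}$ of uniformly bounded total variation, $|\omega_k|(M_k) = \int_{M_k}|K_{g_k}|\vol_{g_k}\le K$. Combined with the diameter bound and the precompactness of Theorem \ref{ThmBddCurvGH}, this places the sequence in a compact class; passing to a subsequence yields a limit $(X,d)$ together with a weak limit $\omega$ of the $\omega_k$, with $|\omega|(X)\le K$. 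Reshetnyak's convergence theory guarantees that bounded integral curvature passes to such limits, so that $(X,d)$ (away from the collapsed part) is again a surface of bounded curvature determined, up to collapse, by $\omega$.

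For the pearl space statement I would analyse the limit locally around a point $x\in X$. The total curvature bound controls area and perimeter through the isoperimetric inequality for surfaces of bounded curvature, and this dictates how the $M_k$ may degenerate near $x$: regions whose area stays bounded below converge to genuine two-dimensional pieces, while tubes of vanishing area collapse to one-dimensional arcs. The core of the argument is to show that a small punctured neighbourhood $U\setminus\{x\}$ splits into finitely many connected pieces, each an alternation of collapsed arcs and non-collapsed spherical pieces, i.e.\@ a string of pearls. Here I would use the structure theory of shortest paths on such surfaces together with Gauss--Bonnet on the approximating necks: the turning of boundary geodesics is controlled by $\omega$, and the total curvature bound forces only finitely many pearls and finitely many strings to accumulate at $x$.

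The improvement to index at most $2$ is where the stronger hypothesis $\int|K_{g_k}|^p\le\Lambda$, $p>1$, enters decisively. By Hölder's inequality the densities $|K_{g_k}|$ are uniformly integrable, so no curvature mass can concentrate on a set of vanishing area; in the limit this means $\omega$ has no atoms. I would then show that a point of index $\ge 3$ forces exactly such an atom. Near a junction where three or more strings meet, the surfaces $M_k$ must contain a collapsing region of negative Euler characteristic (at least a pair of pants, $\chi=2-b\le -1$), whose waists may be taken to be approximate closed geodesics; Gauss--Bonnet then pins its total curvature near $2\pi\chi\le -2\pi$ while its area tends to $0$. Hölder gives $(\int|K_{g_k}|^p)^{1/p}\gtrsim (\operatorname{area})^{1/p-1}\to\infty$ since $p>1$, contradicting the uniform bound $\Lambda$. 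For $p=1$ the exponent $1/p-1$ vanishes and no contradiction arises, which is precisely why branch points are permitted in the first part but excluded in the second.

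The main obstacle is the fine collapse analysis underlying the first part: proving that the punctured neighbourhoods genuinely assemble into strings of pearls, with the correct alternation of collapsed and non-collapsed scales and only finitely many accumulating pearls. This rests on the full Alexandrov--Reshetnyak machinery and careful bookkeeping of the Gauss--Bonnet defect along collapsing tubes. By comparison, once uniform integrability is available the atom-free argument for the index bound is clean and essentially topological.
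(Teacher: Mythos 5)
This statement is not proven in the paper at all: it is imported verbatim from Shioya \cite{s} (Theorems 1.2 and 1.4 there), stated as an external theorem and used as a black box in the proof of theorem \ref{ThmCptSurfIntC}. So there is no internal proof to compare against; the only question is whether your sketch is a viable reconstruction of Shioya's argument. At the level of strategy it is: the pearl-space structure does come out of the Alexandrov--Reshetnyak theory of surfaces of bounded integral curvature, and the index bound does rest on the combination of Gauss--Bonnet on a branching region with H\"older's inequality, where the exponent $1/p-1<0$ is exactly what makes $p>1$ decisive. Your identification of the first part as the genuinely hard one (it occupies most of Shioya's paper) is also accurate, and nothing in that portion of your sketch could be tightened into a proof without essentially reproducing that machinery.

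Two concrete gaps remain in the part you present as ``clean'', the index bound. First, to apply Gauss--Bonnet to the branching region $B(x_k,\delta)$ with at least three boundary circles you need its boundary turning to be uniformly controlled; distance spheres in a surface with only an $L^1$ curvature bound need not be approximate geodesics, so the assertion that the waists ``may be taken to be approximate closed geodesics'' is itself a nontrivial step (handled in \cite{s} by choosing good levels of the distance function and bounding the turning via the curvature measure of the annular collar). Second, the area of the branching region does \emph{not} tend to zero for fixed $\delta$: pearls of definite measure may accumulate at the junction point $x$, so $\mathrm{area}(B(x_k,\delta))\to\mu(B(x,\delta))$ which can be positive. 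The H\"older contradiction only appears after a double limit, first $\delta\to 0$ using $\mu(\{x\})=0$ to make the area small while keeping the Gauss--Bonnet defect close to $2\pi|\chi|\ge 2\pi$, then $k\to\infty$. Your phrasing ``a collapsing region \ldots whose area tends to $0$'' conflates these two limits. With those two points repaired the index argument is correct; as written it has a genuine hole.
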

As Shioya remarks in Remark 1.1 (4), the proof of this is local in character, so that the same conclusions hold for limits of metric balls $B(x_k,R)$ in complete surfaces where $\int_{B(x_k, 2R)} |K_{g_k}| \vol_{g_k} \leq K$.

\begin{proof}[Proof of theorem \ref{ThmCptSurfIntC}]
  We recall the assumptions of the theorem: $V, \Lambda > 0$, $p >1$ and $\alpha \in (0, 2 - 2/p)$ are constants and $(M_k, g_k, x_k)$ is a sequence of complete Riemannian surfaces satisfying $\Vol_{g_k}(M_k) \leq V$ and
  $$\left( \int_{M_k} |K_{g_k}|^p \vol_{g_k} \right)^{1/p} \leq \Lambda.$$
  
  By theorem \ref{ThmBddCurvGH} we may pass to a subsequence converging in the pointed Gromov--Hausdorff topology. And because $\Vol_{g_k}(M_k) \leq V$, we may in addition assume $(M_k, d_{g_k}, \Vol_{g_k}, x_k)$ converges in the measured Gromov--Hausdorff topology. This takes care of the first statement in the theorem.

  The second statement in the theorem is that $X^{reg}$ posesses the structure of a $C^{\alpha}$ Riemannian surface and the third theorem is that if $x  \in X^{reg}$, then $(M_k, g_k, x_k)$ converges to $(X^{reg}, g, x)$ in the pointed, volume exhausted $C^{\alpha}$ Cheeger--Gromov topology. These statements are the conclusions of theorem \ref{ThmRegSubconv}.

  For the fourth and fifth statement, we have to study how the topology of the pearl space $(X, d, \mu, x)$ interacts with the metric properties of the sequence $(M_k, g_k, \Vol_{g_k}, x_k)$.

  Let us define
  $$X_2 = \{x \in X : x \text{ has a neighborhood homeomorphic to } \R^2\},$$
  $$X_1 = \{x \in X : x \text{ has a neighboorhood homeomorphic to } \R\}$$
  and
  $$X_0 = \{x \in X : x \text{ is an isolated point}\}.$$
  We claim that $X^{reg} = X_2$ and $X^- = X_1 \cup X_0$.

  Except for the identity $X^- = \{x \in X : \mu(B(x,r)) = 0 \text{ for some } r > 0\}$, this claim proves the remaining assertions of the theorem. Indeed, $X_1$ is a length space, which is homeomorphic to a $1$-manifold. Such manifolds are locally isometric to line segments. The space $X_0$ is metrically a collection of points. Moreover, $X_1$ and $X_0$ are defined by open condition. This is the fourth claim. The fifth claim is that $X^{\perp} = \partial X^{reg} = \partial X^-$ is a discrete subset of $X$ and that $X$ is the disjoint union of $X^{reg}, X^-$ and $X^{\perp}$. Since $X^{reg} = X_2$ and $X^- = X_0 \cup X_1$ are open, we have $X^{\perp} \cap X^{reg} = \varnothing$ and $X^{\perp} \cap X^- = \varnothing$. Moreover, from the definition of a pearl space, we see that every point in a pearl space is either in $X_0, X_1, X_2$ or a limit point of one of these sets. Moreover, the set of these exceptional points is discrete. This proves the fifth claim.

  To see that $X_2 = X^{reg}$, we first note that $X^{reg} \subset X_2$ is immediate, because $X^{reg}$ has the structure of a topological manifold. The proof of $X_2 \subset X^{reg}$ is more involved and requires a detour into metric geometry. The proof of theorem \ref{ThmTopologicalStructure} shows that at any $x \in X$ and for any $\delta > 0$, there exists a $(2,\delta)$-strainer. (Cf. Lemma 6.1 in \cite{s}.) A $(2, \delta)$-strainer at $x$ consists of four points $(p_1, q_1, p_2, q_2)$, which satisfy certain angle conditions depending on $\delta$ with respect to $x$. Existence of such a strainer implies that there exists $\rho > 0$, $\epsilon > 0$, which depend on $\delta$ and the distance of $x$ to the four points, such that
  $$f : B(x, \rho) \subset X \to U \subset \R^2$$
  $$x \mapsto (d(x, q_1), d(x, q_2))$$
  is a $(1+\epsilon)$-bi-Lipschitz homeomorphism onto an open subset $U$ of $\R^2$. Here $\epsilon$ is small, if $\delta$ is small. Since $(X_k, d_k, x_k)$ converges to $(X,d, x)$ in the pointed Gromov--Hausdorff topology, there exists a $(2, \delta_k)$-strainer at $x_k$ for sufficiently large $k$, where $\delta_k$ converges to $\delta$ and the strainers $(p^k_1, q^k_1, p^k_2, q^k_2)$ converge to the strainer $(p_1, q_1, p_2, q_2)$.

  Applying this to the sequence $(M_k, g_k, x_k)$ and supposing $x \in X_2$, we see that there is a $\rho$ and $\epsilon$ independent of $k$, such that
  $$f_k: B(x_k, \rho) \subset M_k \to U_k \subset \R^2$$
  $$x \mapsto (d_{g_k}(x, q^k_1), d_{g_k}(x, q^k_2))$$
  is a $(1+\epsilon)$-bi-Lipschitz homeomorphism. From the definition of a strainer $f_k(U_k) \supset B(0,R_k),$ where $R_k = C(\epsilon) \min \{ \rho, d(x_k, q^k_1), d(x_k, q^k_2), d(x_k, p^k_1), d(x_k, p^k_2)\}$. Because of the convergence of the strainers, it follows that there is $R$ independent of $k$, such that $R \geq R_k$.

  It follows that $\Vol_{g_k} (B(x_k,r)) \geq (1- \alpha(\epsilon)) \omega_n r^n$ for $r < R$, where $\alpha(\epsilon)$ is some constant, which is small when $\epsilon$ is small. In particular, it follows that there is some $\epsilon_0 > 0$, such that $\nu(x_k) > \epsilon_0$. Recall that lemma \ref{LemmaVCFLimit} says that$\nu(x) \geq \limsup_{k \to \infty} \nu(x_k)$. This implies that $\nu(x) \geq \epsilon_0 > 0$ and thus $x \in X^{reg}$.

  Finally, we need to see that $X^- = X_0 \cup X_1$. Suppose $z \in X^-$. Then by definition of $X^{-}$, $z \notin X^{reg} = X_2$. Hence $z \in X_0 \cup X_1 \cup X^{\perp}$. We thus only need to show that $z \notin X^{\perp}$. Suppose $z \in X^{\perp}$. Since $X^{\perp} = \partial X_2$, it follows that for any $r > 0$, the metric ball $B(z,r)$ intersects $X_2$. Hence there exists $y \in X_2$ and $\rho > 0$, such that $B(y,\rho) \subset X_2 \cap B(z,r)$. Since $X_2 = X^{reg}$, it follows that $\mu(B(y,\rho)) > 0$ and hence $\mu(B(z, r)) > 0$. This shows $z \notin X^{\perp}$ and we conclude that $X^- = X_0 \cup X_1$.
\end{proof}

\end{document}